\definecolor{my-blue}{rgb}{0.0,0.0,0.6}
\definecolor{my-red}{rgb}{0.5,0.0,0.0}
\definecolor{my-green}{rgb}{0.0,0.5,0.0}
\newtheorem{theorem}{\sc Theorem}[section]
\newtheorem{lemma}[theorem]{\sc Lemma}
\newtheorem{proposition}[theorem]{\sc Proposition}
\newtheorem{corollary}[theorem]{\sc Corollary}
\numberwithin{equation}{section}
\theoremstyle{remark}
\newtheorem{remark}[theorem]{Remark}
\newcommand{\be}{\begin{equation}}
\newcommand{\ee}{\end{equation}}
\newcommand{\nn}{\nonumber}
\providecommand{\abs}[1]{\vert#1\vert}
\providecommand{\norm}[1]{\Vert#1\Vert}
\newcommand{\fl}[1]{\lfloor{#1}\rfloor}
\def\cZ{\mathcal{Z}}
\def\cI{\mathcal{I}}
\def\bE{\mathbb{E}}
\def\bN{\mathbb{N}}
\def\bP{\mathbb{P}}
\def\bR{\mathbb{R}}
\def\bZ{\mathbb{Z}}
 \def\Util{\wt{U}} \def\Vtil{\wt{V}}
\def\w{\omega}
\def\om{\omega}
\def\e{\varepsilon}
\def\ind{\mathbf{1}}
\def\mE{\mathbf{E}}   
\def\mP{\mathbf{P}}
\def\m1{\mathbf{1}}
\DeclareMathOperator{\Var}{Var}   \DeclareMathOperator{\Cov}{Cov}  
 \def\Vvv{{\rm\mathbb{V}ar}}  \def\Cvv{{\rm\mathbb{C}ov}}
 \def\wt{\widetilde}   
\def\E{\bE}
\def\P{\bP} %% environment measure 
\def\xexit{{\xi_x}}   %exit point from x-axis
\def\yexit{{\xi_y}}   %exit point from y-axis
\newcommand{\xexitk}[2]{{\xi_x^{(#1,#2)}}}   %exit point  with shifted origin 
\newcommand{\yexitk}[2]{{\xi_y^{(#1,#2)}}} 
\def\dxexit{{\xi^*_x}}  \def\dyexit{{\xi^*_y}}    %their duals
\def\vva{v_0} \def\vvb{v_1}   %%%intersections at level j 
\def\wwa{w_0} \def\wwb{w_1} 
\def\north{S_{\mathcal N}}  \def\south{S_{\mathcal S}}  \def\east{S_{\mathcal E}}  
\def\west{S_{\mathcal W}}
\def\backpi{\overleftarrow{\pi}} 
\def\Zalt{Z^{\scriptscriptstyle\,\square}}
\def\digamf{\Psi_0} %digamma function 
\def\trigamf{\Psi_1} %trigamma function 
\def\functaa{L} %a function needed
\def\functaabar{\bar L} %a function needed
\def\Pann{P} \def\Eann{E}  %annealed 
\def\freee{f} %free energy ? (limit) 
\def\mnc{\kappa} %constant for dimensions of rectangle 
\begin{document}

\title[Scaling for a polymer]
{Scaling for a one-dimensional directed polymer with  boundary conditions (revised)}
\author[T.~Sepp\"al\"ainen]{Timo Sepp\"al\"ainen}
\address{Timo Sepp\"al\"ainen\\ University of Wisconsin-Madison\\ 
Mathematics Department\\ Van Vleck Hall\\ 480 Lincoln Dr.\\  
Madison WI 53706-1388\\ USA.}
\email{seppalai@math.wisc.edu}
\urladdr{http://www.math.wisc.edu/~seppalai}
\thanks{The author was partially supported by 
National Science Foundation grant DMS-0701091 and by the
Wisconsin Alumni Research Foundation.} 
\keywords{scaling exponent, directed polymer, random environment,
superdiffusivity, Burke's theorem, partition function}
\subjclass[2000]{60K35, 60K37, 82B41, 82D60} 
\date{Last updated August 26, 2015.}  %\today}

\begin{abstract} 
We study a 1+1-dimensional directed polymer in a random
environment on 
the integer lattice with  log-gamma distributed weights.   
Among directed polymers this model is special in the same way 
as the last-passage percolation model with exponential or geometric
weights is special among growth models, namely,  both permit explicit
calculations.  With  appropriate boundary conditions
 the polymer with log-gamma weights satisfies 
an analogue of Burke's theorem for queues. Building on this 
we prove  the conjectured values for the  fluctuation exponents of the free energy and
the polymer path,  in the case where the boundary conditions are present and 
 both endpoints of the polymer path are fixed. 
 For the polymer 
without boundary conditions and 
with either fixed or free endpoint we get 
the expected upper bounds on the exponents. 

This is a corrected and improved version of the paper published in Ann.~Probab.~40 (2012) 19--73.   The differences between this version and the published version are explained at the end of the Introduction. 

\end{abstract}
\maketitle

%\begin{center} Corrected version October 2012 \end{center}

\section{Introduction}
The {\sl directed polymer in a random environment} 
represents  a polymer (a long chain of molecules) by a
 random walk path that interacts with a random 
environment.  Let $x_\centerdot=(x_k)_{k\ge 0}$ denote a nearest-neighbor 
path in $\bZ^d$ started at the origin: $x_k\in\bZ^d$,  $x_0=0$,  and 
$\abs{x_k-x_{k-1}}=1$.   The environment $\om=(\om(s,u):s\in\bN, u\in\bZ^d)$
puts a real-valued weight 
$\om(s,u)$ at space-time point $(u,s)\in\bZ^d\times\bN$.
 For a  path segment $x_{0,n}=(x_0,\dotsc, x_n)$,   $H_n(x_{0,n})$
  is the total weight collected by 
the walk up to time $n$:  $H_n(x_{0,n})=\sum_{s=1}^n \om(s, x_s)$.
 The quenched polymer distribution on paths,  in
environment $\om$ and 
at inverse temperature $\beta>0$,  is the probability measure   defined by
\be  Q^\om_n(dx_\centerdot)= \frac1{Z^\om_n} \exp\{\beta H_n(x_{0,n}) \} 
\label{poly-meas}\ee
with normalization factor (partition function) 
$ Z^\om_n= \sum_{x_{0,n}} e^{\beta H_n(x_{0,n})}.$
The environment $\w$ is taken as random with   probability distribution
 $\bP$, typically   
  such that the  weights $\{\om(s,u)\}$ are i.i.d.\ random variables. 

At $\beta=0$ the model is standard simple random walk. 
 The general objective  is  to understand how the model behaves as
 $\beta>0$ and the dimension $d$ varies.  
A key question is whether the diffusive behavior of the walk is 
affected.  ``Diffusive behavior'' refers to  the fluctuation behavior 
of standard  random walk,  characterized by $n^{-1}E(x_n^2)\to c$ 
and convergence of 
diffusively rescaled walks  $n^{-1/2}x_{\fl{nt}}$   to 
Brownian motion. 

The directed polymer model was introduced in the statistical 
physics literature by Huse and Henley in 1985 \cite{huse-henl}.
The first rigorous mathematical work was by Imbrie and Spencer
\cite{imbr-spen} in 1988. They  proved with an elaborate
expansion  that in dimensions $d\ge 3$
and with small enough $\beta$, 
the walk is diffusive in the sense that, for a.e.\ environment $\w$,  
$n^{-1}E^{Q^\om}(\abs{x_n}^2)\to c$.  Bolthausen  \cite{bolt-cmp-89} strengthened
the result to a central limit theorem for the endpoint of the walk,
 still $d\ge 3$, small $\beta$ and for a.e.\ $\w$,
  through the observation that
 $W_n=Z_n/\bE(Z_n)$ is a martingale. 
   Since then martingale techniques
  have been a standard fixture in much of the work on 
directed polymers.  

The limit  $W_\infty=\lim W_n$ is either almost surely $0$ or almost surely $>0$. 
The case $W_\infty=0$ has been termed {\sl strong disorder} and 
$W_\infty>0$ {\sl weak disorder}.   There is a critical 
value $\beta_c$ such that weak disorder holds for $\beta<\beta_c$ and 
strong for $\beta>\beta_c$.  It is known that
 $\beta_c=0$ for $d\in\{1,2\}$ and   
$0<\beta_c\le\infty$ for $d\ge 3$. 
In $d\ge 3$ and weak disorder the walk converges to a Brownian
motion, and the limiting  diffusion matrix is the same
 as for standard random walk 
\cite{come-yosh-aop-06}. 
There is a further refinement of strong disorder into  
 strong and very strong  disorder.  Sharp  recent results appear in 
\cite{laco-10}.

  One way to phrase questions about the polymer model  is to ask about two 
scaling exponents,  $\zeta$ and $\chi$, defined somewhat informally 
as follows: 
\be  \text{ fluctuations of the path
 $x_{0,n}$ are  of order $n^\zeta$ } \label{poly-xi}\ee
and  
\be  \text{  fluctuations of  
 $\log Z_n $ are of order $n^\chi$. } \label{poly-chi}\ee

Let us   restrict ourselves  to the case $d=1$ for the remainder
of the paper.  By the results mentioned above 
the model is in strong disorder for all $\beta>0$.  
It is expected that the 1-dimensional exponents
are  $\chi=1/3$ and $\zeta=2/3$
\cite{krug-spoh}.  
Precise values  have not been obtained in the past, but during the last decade
and a half 
nontrivial rigorous bounds have appeared in the literature 
 for some   models with Gaussian ingredients. 
 For  a Gaussian random walk    in a Gaussian potential  
   Petermann \cite{petermann}  proved the 
lower  bound $\zeta\ge 3/5$ 
and Mejane \cite{meja-04} provided the upper bound $\zeta\le 3/4$.  
Petermann's proof was adapted to a certain continuous setting
in  \cite{beze-tind-vien}. 
For an undirected  Brownian motion in a Poissonian potential 
   W\"uthrich 
 obtained $3/5\le \zeta\le 3/4$  and $\chi\ge 1/8$   \cite{ wuth-98aihp,wuth-98aop}. 
For a directed  Brownian motion in a Poissonian potential  Comets and 
Yoshida derived $\zeta\le 3/4$ and $\chi\ge 1/8$   \cite{come-yosh-05}. 
  
  Piza \cite{piza-97} showed generally that the
fluctuations of $\log Z_n$ diverge at least logarithmically, and bounds on
exponents under curvature assumptions on the limiting free energy.  
Related results for first passage percolation appeared in 
\cite{lice-newm-piza,newm-piza-95}.  

Exact exponents and even limit distributions have recently been derived for the
so-called {\sl continuum directed random polymer}.  The partition function 
  $\cZ(t,x)$  is the solution  of a stochastic heat equation 
$\cZ_t=\tfrac12 \cZ_{xx} -\cZ\dot W$ where $\dot W$ is space-time white noise.  
In \cite{bala-quas-sepp} the exact scaling exponent is determined for initial
data $\cZ(0,x)=e^{-B(x)}$  where $B$ is a two-sided Brownian motion:
$\Var(\log \cZ(t,0))$ is of order $t^{2/3}$.  The result comes from corresponding
bounds for the current of the weakly asymmetric simple exclusion process (WASEP). The
techniques are related to the ones used in the present paper.  
The link from WASEP to $\log\cZ$ that enables this transfer of estimates 
 is originally due to \cite{bert-giac-97}.  
\cite{amir-corw-quas}  obtains the probability distribution of $\log \cZ$ 
for an initial delta function $\cZ(0,x)=\delta_0(x)$ and 
 proves a Tracy-Widom limit under the appropriate scaling.   
   The WASEP connection is used again 
in \cite{amir-corw-quas}, together with asymptotic analysis of a determinantal
formula from \cite{trac-wido-09cmp}.  There is no methodological overlap 
between \cite{amir-corw-quas} and the present paper.

\begin{figure}[ht]
\begin{center}
\begin{picture}(220,140)(5,20)
\put(40,20){\vector(1,0){150}}
\put(40,20){\vector(0,1){130}}
\put(40,40){\line(1,0){138}}
\put(63,20){\line(0,1){120}}
\multiput(40,60)(0,20){5}{\line(1,0){138}}
\multiput(86,20)(23,0){5}{\line(0,1){120}}
\multiput(49,27)(23,0){3}{\large$\bullet$}
\multiput(95,47)(0,20){3}{\large$\bullet$}
\multiput(118,87)(23,0){3}{\large$\bullet$}
\multiput(164,107)(0,20){2}{\large$\bullet$} 
%\put(164,127){\large$\bullet$}
\multiput(56.5,30)(23,0){2}{\vector(1,0){14}}
\multiput(98.1,34.1)(0,20){3}{\vector(0,1){12}}
\multiput(102.5,90)(23,0){3}{\vector(1,0){14}}
\multiput(167.2,93.8)(0,20){2}{\vector(0,1){12.2}}
%\put(148.8,130){\vector(1,0){14}}
\put(194.7,17){$x$} \put(30,146.5){$y$}
\put(50,10){\small 0} \put(73,10){\small 1}
\put(96,10){\small 2} \put(119,10){\small 3}
\put(142,10){\small 4}\put(165,10){\small 5}
\put(29,27){\small 0} \put(29,47){\small 1} \put(29,67){\small 2}
\put(29,87){\small 3} \put(29,107){\small 4} \put(29,127){\small 5}
\end{picture}
\end{center}
\caption{ \small 
An up-right path from $(0,0)$ to $(5,5)$ in $\bZ_+^2$.}
\label{fig1}\end{figure}
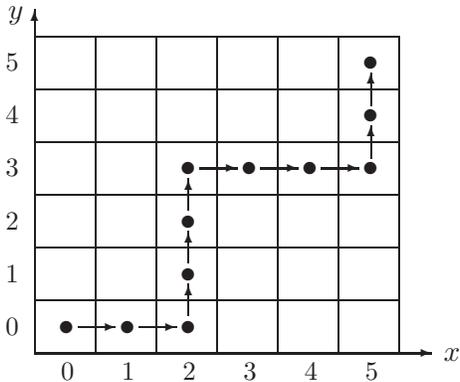%

Let us return to the  1+1-dimensional lattice polymer.  
For the rest of the discussion we   turn 
the picture 45 degrees clockwise so that   the 
model lives in the nonnegative quadrant 
$\bZ_+^2$ of the plane,  instead of the space-time  wedge
$\{(u,s)\in\bZ\times\bN: \abs{u}\le s\}$. 
The weights are i.i.d.\ variables $\{\om(i,j): i,j\ge 0\}$.   
The polymer $x_\centerdot$ becomes
a nearest-neighbor {\sl up-right} path (see Figure \ref{fig1}). 
We also fix both endpoints of the path.  So, given 
the endpoint $(m,n)$,  the partition function is 
\be  Z^\om_{m,n} = \sum_{x_{0,m+n}}  
\exp\Bigl\{  \beta \sum_{k=1}^{m+n}\om(x_k)\Bigr\}  
\label{defZmn}\ee
where the sum is over paths $x_{0,m+n}$ 
 that satisfy    $x_0=(0,0)$, $x_{m+n}=(m,n)$ and 
 $x_k-x_{k-1}=(1,0)$ or $(0,1)$. The polymer measure
of such a path is  
\be Q^\om_{m,n}(x_{0,m+n})  = \frac1{Z^\om_{m,n}}
\exp\Bigl\{  \beta \sum_{k=1}^{m+n}\om(x_k)\Bigr\}. 
\label{defQmn}\ee

  If we take
 the ``zero temperature limit''   $\beta\nearrow\infty$
in \eqref{defQmn} then the measure $Q^\om_{m,n}$ concentrates on the
paths $x_{0,m+n}$ that maximize the sum $\sum_{k=1}^{m+n} \om(x_k)$.  
Thus    the polymer model has become a 
{\sl last-passage percolation model}, also called the 
{\sl corner growth model}. 
The quantity that corresponds to $\log Z_{m,n}$ 
is  the {\sl passage time} 
\be G_{m,n}= \max_{x_{0,m+n}}  
 \sum_{k=1}^{m+n}\om(x_k). \label{Gmn}\ee 
  For certain last-passage growth  models, notably for \eqref{Gmn}
with exponential or geometric weights $\w(i,j)$,
 not only have the predicted
 exponents  been confirmed but also limiting
Tracy-Widom fluctuations for $G_{m,n}$ have been proved
 \cite{baik-deif-joha-99, bala-cato-sepp, cato-groe-06,
ferr-spoh-06, joha, joha-ptrf-00}.
The recent article \cite{bena-corw} verifies a complete
picture proposed in 
\cite{prah-spoh} that characterizes the scaling limits of 
$G_{m,n}$ with exponential weights as a function of the 
parameters of the boundary weights and the ratio $m/n$.

In the present paper  we study the polymer model 
\eqref{defZmn}--\eqref{defQmn} 
with fixed endpoints,  with fixed $\beta=1$,
and  for a particular choice of weight
distribution.  Namely, the weights $\{\w(i,j)\}$ are independent random
variables with log-gamma distributions. Precise definitions
follow in the next section.  This particular polymer
model turns out to be amenable to explicit computation,
similarly  to the case of exponential or geometric  weights 
among the corner growth models \eqref{Gmn}.  
%We show that for the polymer with constrained endpoints the fluctuation exponents
%\eqref{poly-xi} and \eqref{poly-chi} for   $\log Z_{m,n}$ and the path $x_\centerdot$ take their conjectured values. 

 %  Precisely speaking, 
 We introduce a polymer model with boundary conditions that 
possesses  a  
two-dimensional stationarity property.
By boundary conditions we mean that the weights on the boundaries
of $\bZ_+^2$ are distributionally different from the weights in
the interior, or bulk. 
 For the model with
boundary conditions we prove that the fluctuation  exponents take
exactly their conjectured  values  $\chi=1/3$ and $\zeta=2/3$ when
the endpoint $(m,n)$ is taken to infinity along a 
 characterictic direction. This characteristic direction
 is a function of the parameters
of the weight distributions.  
In other directions $\log Z_{m,n}$ satisfies a central limit
theorem in the model with boundary conditions.  
 As a corollary
we get the correct upper bounds for the exponents in the model
without boundary and with either fixed or free endpoint, but still with 
 i.i.d.\ log-gamma weights $\{\w(i,j)\}$.
%The temperature plays no role in this work so $\beta=1$ throughout.  

In addition to the  $\beta\nearrow\infty$ limit, there is another
formal connection between the polymer model and the corner growth
model. Namely, the definitions of $Z_{m,n}$ and $G_{m,n}$ 
imply the equations 
\be  Z_{m,n}=e^{\beta\om(m,n)} (Z_{m-1,n}+Z_{m,n-1}) \label{Zeq1}\ee
and 
\be  G_{m,n}=\om(m,n) + \max(G_{m-1,n}\,,\, G_{m,n-1}).  \label{Geq1}\ee
These equations can be paraphrased by saying that 
$G_{m,n}$ obeys max-plus algebra, while $Z_{m,n}$ obeys 
the familiar algebra of addition and multiplication. 

This observation informs the  approach of the paper. It is not that we can  
  convert results for $G$  into
results for $Z$. Rather, after the proofs have been found,
one can detect a kinship with the arguments of 
\cite{bala-cato-sepp}, but transformed from $(\max, +)$ 
to $(+\,,\cdot\,)$.   The ideas
in \cite{bala-cato-sepp}  were originally adapted from  
the seminal paper \cite{cato-groe-06}.  The purpose was  to give an alternative
proof of  the scaling exponents of the corner growth model, without 
the asymptotic analysis of Fredholm determinants utilized in \cite{joha}. 

\medskip

 {\sl Frequently used notation.}   $\bN=\{1,2,3,\dotsc\}$ and $\bZ_+=\{0,1,2,\dotsc\}$.
Rectangles on the planar integer lattice are denoted by 
$\Lambda_{m,n}=\{0,\dotsc,m\}\times\{0,\dotsc,n\}$ and more
generally 
$\Lambda_{(k,\ell), (m,n)}=\{k,\dotsc,m\}\times\{\ell,\dotsc,n\}$.  
 $\bP$ is the probability distribution on the random environments
 or weights $\w$, 
 and  under $\bP$ the expectation of a random variable  $X$ is
 $\bE(X)$ and variance $\Vvv(X)$.
 Overline means centering:  $\overline{X}=X-\E X$.  
$Q^\w$ is the quenched polymer measure in a rectangle. The annealed measure
is $\Pann(\cdot)= \E Q^\w(\cdot)$ with expectation $\Eann(\cdot)$.  
$\mP$ is used for a generic probability measure that is not part
of the polymer model.  
Paths can  be written
$x_{k,\ell}=(x_k,x_{k+1},\dotsc, x_\ell)$ but also $x_\centerdot$ when 
$k,\ell$
are understood. Occasionally $A$ and $B$  denote gamma-distributed  random variables. 
The more usual random variable symbols  $X$, $Y,$ $Z$ and $W$ have 
specific  meanings in the polymer model.  
\medskip

{\sl Acknowledgements.}  The author thanks M\'arton Bal\'azs for 
pointing out that the gamma distribution solves the equations 
of Lemma \ref{UVY-lm},  Persi Diaconis for literature suggestions, and an 
anonymous referee for valuable suggestions. 
% Gregorio Moreno and  Benedek Valk\'o asked the question that identified the mistake corrected in this revision of the published version.  

\medskip

{\sl Changes made to this version.}
This version differs from the published version (Ann.~Pro\-bab.~40 (2012) 19--73) in the following  respects:  two mistakes in  proofs have been fixed,   one theorem has been strengthened, and terminology has been altered to better agree with common usage. 

The published version   
has a mistake on lines 3--5 of page 52. Namely, the reversal mapping has to be applied in a fixed rectangle, but here the rectangle varies with $k$.     In this corrected version the required inequality is derived from the coupling given in Lemma \ref{coupl-lm}.   
The changes made to fix this mistake  are confined to the proof of Step 1 of the
proof of Proposition \ref{lb-prop1}.   

In the published version part of the proof of Lemma 5.4(ii) is missing.  The proof is complete for $1\le b\le CN^{2/3}$, but a separate argument is needed for $b\ge CN^{2/3}$.  In the present version the lemma has become Lemma \ref{lb-aux-lm3} and the proof is complete.  

In Theorem \ref{tot-f-thm} of the published version parameter $N_0$ depends on $b$.  This dependence has been lifted in the present version.

\section{The model and results}

We begin with the definition of the polymer model with boundaries and then state the results.  As stated in the introduction, 
relative to the standard description of the polymer model 
we turn
the picture 45 degrees clockwise so that the polymer lives in the nonnegative quadrant 
$\bZ_+^2$ of the planar lattice.  The inverse 
temperature parameter $\beta=1$ throughout.  
We replace the exponentiated weights 
  with multiplicative   weights $Y_{i,j}=e^{\om(i,j)}$, $(i,j)\in\bZ_+^2$.  
Then the partition function for paths whose endpoint is 
constrained to lie  at 
  $(m,n)$   is given by 
\be
Z_{m,n}=\sum_{x_\centerdot \in\Pi_{m,n}} \prod_{k=1}^{m+n} Y_{x_k}
\label{Z0.0}\ee  
where $\Pi_{m,n}$ denotes the collection of up-right paths $x_\centerdot=(x_k)_{0\le k\le m+n}$
  inside the rectangle $\Lambda_{m,n}=\{0,\dotsc,m\}\times\{0,\dotsc,n\}$ 
that go from  $(0,0)$   to $(m,n)$:    $x_0=(0,0)$, $x_{m+n}=(m,n)$ and 
 $x_k-x_{k-1}=(1,0)$ or $(0,1)$.   We adopt the convention that $Z_{m,n}$ does not
include  the weight at the origin, and if a value is needed then set  $Z_{0,0}=Y_{0,0}=1$. 
The symbol $\om$ will denote the entire random environment:
$\om=(Y_{i,j}:(i,j)\in\bZ_+^2)$.  When necessary the dependence of 
$Z_{m,n}$ on $\om$ will be expressed by $Z_{m,n}^\om$, with a similar
convention for other $\w$-dependent quantities.  
  
 We assign
distinct weight distributions  
on the boundaries $(\bN\times\{0\})\cup(\{0\}\times\bN)$ and in  the bulk $\bN^2$. 
  To highlight this   the 
  symbols $U$ and  $V$  will denote   weights on the horizontal
and vertical boundaries:  
\be  U_{i,0}=Y_{i,0}  \quad\text{and}\quad  V_{0,j}=Y_{0,j} \quad\text{for $i,j\in\bN$.} \label{UV1}\ee
However, in formulas such as \eqref{Z0.0}  it is obviously 
  convenient to use a single symbol $Y_{i,j}$ for all the  weights.  

Our results rest on the assumption  that the weights are reciprocals
 of gamma variables.  Let us  recall some basics. 
 The gamma function is $\Gamma(s)=\int_0^\infty x^{s-1}e^{-x}\,dx$. We 
 shall need it only for positive real $s$.   
The Gamma$(\theta, r)$ distribution  has density 
$\Gamma(\theta)^{-1}r^\theta x^{\theta-1}e^{-rx}$ on $\bR_+$,
mean $\theta/r$ and variance $\theta/r^2$. 
  
 The logarithm $\log \Gamma(s)$  
 is convex and  infinitely differentiable on $(0,\infty)$.
The derivatives are the polygamma functions
$\Psi_n(s)=(d^{n+1}/ds^{n+1}) \log \Gamma(s)$,
$n\in\bZ_+$ \cite[Section 6.4]{handbook}.   
For $n\ge 1$,  $\Psi_n$ 
is nonzero and has sign $(-1)^{n-1}$ throughout $(0,\infty)$
\cite[Thm.~7.71]{stro-ca}.  
Throughout the paper we make use
of  the digamma  and 
 trigamma functions  $\digamf$ and  $\trigamf$, on account of the 
connections 
\be
\digamf(\theta)=\E(\log A)\quad\text{and}\quad 
 \trigamf(\theta)=\Vvv(\log A) \quad
\text{for} \quad  A\sim \,\text{Gamma}(\theta, 1). 
\label{ditri1}\ee 
 
 Here is the assumption on the distributions.   Let $0<\theta<\mu<\infty$.
 \be \begin{aligned} &\text{Weights } \ 
\{U_{i,0}, V_{0,j}, Y_{i,j}:  i,j\in\bN\} \ \text{ are   independent with
distributions} \\
&\text{$U_{i,0}^{-1}$ $\sim$ Gamma$(\theta, 1)$,  $V_{0,j}^{-1}$ $\sim$ Gamma$(\mu-\theta, 1)$,   and 
$Y_{i,j}^{-1}$ $\sim$ Gamma$(\mu, 1)$.} 
\end{aligned}\label{distr4}\ee
We fixed the scale parameter $r=1$ in the gamma distributions 
above for the sake of convenience.   We could equally well fix
it to any value and our results would not change,  as long as   all three gamma distributions
above have the same scale parameter.   

A key technical result will be that under \eqref{distr4}
each  ratio  $U_{m,n}=Z_{m,n}/Z_{m-1,n}$
and $V_{m,n}=Z_{m,n}/Z_{m,n-1}$ has the same marginal distribution as $U$ and $V$ in \eqref{distr4}.
This is a Burke's Theorem of sorts, and appears as Theorem \ref{burkethm} below.  
From this we can compute the mean exactly: for $m,n\ge 0$, 
\be \E\bigl[\log Z_{m,n}\bigr] = m\E(\log U)+n\E(\log V)= -m\digamf(\theta)-n\digamf(\mu-\theta).  
\label{ElogZ}\ee

Together with the choice of the parameters $\theta, \mu$ goes a choice of 
``characteristic direction''  $( \trigamf(\mu-\theta),  \trigamf(\theta))$     for 
the   polymer.   Let $N$ denote the scaling   parameter  
we take to $\infty$.  We assume that the coordinates $(m,n)$  of the endpoint 
of the polymer satisfy 
\be \abs{\, m-{N\trigamf(\mu-\theta)}\, }\le  \gamma N^{2/3} 
\quad\text{and}\quad 
 \abs{\, n- {N\trigamf(\theta)}\,}
\le \gamma N^{2/3}  \label{mnN}\ee
for some fixed constant $\gamma$. 
Now we can state the variance bounds for the free energy. 

\begin{theorem} Assume \eqref{distr4} and let $(m,n)$ be as in \eqref{mnN}. Then
there exist constants $0<C_1,C_2<\infty$ such that, for $N\ge 1$, 
\[  C_1N^{2/3} \le \Vvv(\log Z_{m,n}) \le C_2N^{2/3} . \]
\label{var-bd-thm}\end{theorem}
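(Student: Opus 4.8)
The plan is to exploit the Burke-type stationarity (Theorem~\ref{burkethm}) to reduce the variance of $\log Z_{m,n}$ to a computation involving the exit points of the polymer path from the two coordinate axes, and then run the coupled upper/lower bound argument familiar from the analysis of the exponential corner growth model in \cite{bala-cato-sepp}, transported from $(\max,+)$ to $(+,\cdot)$. Concretely, write $\log Z_{m,n}$ as a telescoping sum along, say, the bottom edge and then up a column: $\log Z_{m,n}=\sum_{i=1}^m \log U_{i,0}+\sum_{j=1}^n \log V_{m,j}$. By Burke each $\log U_{i,0}$ is $-\digamf(\theta)$ in mean and each $\log V_{m,j}$ is $-\digamf(\mu-\theta)$ in mean, consistent with \eqref{ElogZ}; but the two sums are strongly correlated because the $V_{m,j}$ depend on the bulk weights that also feed the $U_{i,0}$. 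The key algebraic identity I would look for — the polymer analogue of the Balázs--Cator--Seppäläinen variance formula — expresses $\Vvv(\log Z_{m,n})$ in terms of $\E$ of the bulk weights collected on the portions of the path spent on the axes, equivalently in terms of the ``exit point'' $\xexit$ (the largest $i$ with $x_i=(i,0)$ on the bottom axis, and symmetrically $\yexit$ on the left axis). Differentiating $\E[\log Z_{m,n}]$ in the parameter $\theta$ (with $\mu$ fixed) and using \eqref{ElogZ} should produce such a formula: roughly $\Vvv(\log Z_{m,n}) = n\trigamf(\mu-\theta) - m\trigamf(\theta) + 2\,\trigamf(\theta)\,\Eann[\,\text{(number of steps on the bottom axis)}\,] + (\text{correction})$, so that the problem becomes one of estimating $\Eann\xexit$ from above and below.

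For the \textbf{upper bound} I would argue as follows. If $\Eann\xexit$ were of order larger than $N^{2/3}$, the path would spend too much time on the axis; one shows this forces the free energy along a perturbed (tilted) boundary to deviate too much, contradicting the exact mean formula \eqref{ElogZ} together with convexity of $\digamf$. More precisely, one compares $Z_{m,n}$ with the partition function $\wt Z_{m,n}$ built from boundary weights at a shifted parameter $\theta\pm\delta N^{-1/3}$; a Chebyshev/FKG-type inequality and the comparison $\Eann[\xexit\,;\,\xexit > rN^{2/3}] \lesssim r^{-1}(\Vvv \log Z)/N^{2/3} + (\text{mean-shift terms})$ gives $\Eann\xexit = O(N^{2/3})$ provided $\Vvv(\log Z_{m,n}) = O(N^{2/3})$. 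This is circular unless one bootstraps: so one first proves, by the tilting/comparison argument alone (without knowing the variance), that $\Eann\xexit \le CN^{2/3}$, feeds that into the variance identity to get $\Vvv(\log Z_{m,n}) \le C_2 N^{2/3}$. The log-concavity and smoothness of the gamma density, and the sign properties of $\trigamf$ recorded after \eqref{distr4}, are what make the curvature estimates work.

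For the \textbf{lower bound} the mechanism is dual: one shows $\Vvv(\log Z_{m,n})$ cannot be too small by showing the path must, with non-negligible probability, make an excursion of length $\gtrsim N^{2/3}$ onto one of the axes. Here one uses that on the bottom axis the increments $\log U_{i,0}$ are i.i.d.\ with variance $\trigamf(\theta)>0$, so the partial sums have genuine fluctuations of order $\sqrt{k}$ over an excursion of length $k$; combined with a lower bound on $\Eann\xexit$ (again from the differentiated mean identity, now reading off that $\Eann\xexit \ge cN^{2/3}$ because otherwise \eqref{ElogZ} would be violated in the characteristic direction \eqref{mnN}), this injects fluctuations of order $(N^{2/3})^{1/2}\cdot(\text{something})$ — one has to be careful that the naive count gives $N^{1/3}$, not $N^{1/3}$ for the standard deviation; the correct scaling $N^{1/3}$ for $\sqrt{\Vvv}$ comes from the excursion having length of order $N^{2/3}$, whose contribution to the \emph{variance} is of order $N^{2/3}$. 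The decoupling needed — that the axis excursion's fluctuations are not cancelled by the bulk — is handled by conditioning on the exit point and using independence of the axis weights from the bulk weights, exactly as in \cite{bala-cato-sepp,cato-groe-06}.

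The step I expect to be the main obstacle is establishing the exact variance identity relating $\Vvv(\log Z_{m,n})$ to $\Eann\xexit$ (or to the occupation of the axes), and in particular controlling the error terms in it with the right sign. In the $(\max,+)$ world of \cite{bala-cato-sepp} this identity is a clean consequence of the stationarity of the boundary process and a ``second-class particle'' interpretation; here one must find the multiplicative analogue, presumably by differentiating $\E[\log Z_{m,n}]=-m\digamf(\theta)-n\digamf(\mu-\theta)$ in $\theta$ and recognizing the resulting covariance as a boundary-occupation expectation. Getting the bootstrap to close — proving an a priori $O(N^{2/3})$ bound on $\Eann\xexit$ before one controls the variance — is the other delicate point, and is where the convexity of $\digamf$ and the quantitative smoothness of the gamma family will be essential.
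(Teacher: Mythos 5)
Your outline of the variance identity and of the upper bound is essentially the paper's route: differentiating $\E[\log Z_{m,n}]$ in $\theta$ does produce the exact identity of Theorem \ref{varxi-thm} (with the boundary-occupation term $\Eann\bigl[\sum_{i=1}^{\xexit}\functaa(\theta,Y_{i,0}^{-1})\bigr]$ rather than literally $\trigamf(\theta)\,\Eann\xexit$, but these are comparable), and the upper bound is indeed a tilt of the boundary parameter by order $u/N$ combined with a self-bounding inequality for $\Eann(\xexit)$ that closes because the coefficient of $\Eann(\xexit)$ on the right can be made small; your worry about circularity is resolved in exactly that way rather than by an a priori bound.

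The genuine gap is in your lower bound. You propose to ``read off'' $\Eann\xexit\ge cN^{2/3}$ from the differentiated mean identity, ``because otherwise \eqref{ElogZ} would be violated in the characteristic direction.'' This cannot work: \eqref{ElogZ} is an exact identity, valid for every $(m,n)$ and every admissible $\theta$, so nothing is ever violated, and the differentiated version is precisely \eqref{var1}, which in the characteristic direction \eqref{mnN} reads $2\,\Eann\bigl[\sum_{i=1}^{\xexit}\functaa_i\bigr]=\Vvv(\log Z_{m,n})+m\trigamf(\theta)-n\trigamf(\mu-\theta)$ with the linear terms cancelling to $O(N^{2/3})$ but of uncontrolled sign. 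A lower bound on the exit point is therefore equivalent to the variance lower bound you are trying to prove, not an input to it. The paper has to work much harder here: it proves Proposition \ref{lb-prop1}, that $\Pann\{1\le\xexit\le\delta N^{2/3}\}$ is negligible for small $\delta$, using the comparison inequalities \eqref{Zcomp2} (via the monotonicity Lemma \ref{monolm}), the reversed/dual polymer measure, auxiliary boundary environments with parameter shifted by $\mp aN^{-1/3}$, the already-proved upper-bound tail estimates of Lemma \ref{auxlm7}, and a Brownian approximation (with the law of the iterated logarithm) for products of boundary-weight ratios. Only then does adding \eqref{var1} and \eqref{var1.1}, which gives $\Vvv(\log Z_{m,n})=\Eann\bigl[\sum_{i=1}^{\xexit}\functaa(\theta,Y_{i,0}^{-1})\bigr]+\Eann\bigl[\sum_{j=1}^{\yexit}\functaa(\mu-\theta,Y_{0,j}^{-1})\bigr]$, combined with the fact that one of $\xexit,\yexit$ exceeds $\delta N^{2/3}$ with probability at least $1/4$ and an elementary large-deviation bound for the i.i.d.\ positive $\functaa_i$, yield $\Vvv\ge cN^{2/3}$. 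Your CLT heuristic (``the axis excursion injects fluctuations'') is also not the mechanism used, and by itself it does not address the real difficulty, which is showing the polymer actually makes an axis excursion of length $\gtrsim N^{2/3}$ with non-vanishing annealed probability.
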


The constants $C_1,C_2$  in the theorem depend on $0<\theta<\mu$
and on $\gamma$ of \eqref{mnN},  and they can be taken
the same for $(\theta,\mu, \gamma)$ that vary in a compact set.  This holds for  all the
constants in the theorems of this section: they depend on the parameters
of the assumptions, but for parameter values in a compact set the constants 
themselves can be fixed. 

The upper bound on the variance is good enough for Borel-Cantelli to give
the strong law of large numbers:   with $(m,n)$ as in \eqref{mnN}, 
\be   \lim_{N\to\infty} N^{-1}\log Z_{m,n}  = 
-\digamf(\theta)\trigamf(\mu-\theta)-\digamf(\mu-\theta)\trigamf(\theta)  
\quad \text{$\P$-a.s.}  
\label{Zllnbd}\ee

As a further corollary we deduce that if the direction of the polymer deviates   from the  
characteristic one by a larger power of $N$  than allowed by \eqref{mnN},
 then $\log Z$ satisfies a 
  central  limit theorem.  For the sake of concreteness we treat the case where
the  horizontal direction is too large. 

\begin{corollary} Assume \eqref{distr4}.  Suppose $m,n\to\infty$.  Define parameter 
$N$ by $n=\trigamf(\theta)N$, and assume that 
\[  N^{-\alpha}\bigl(m-\trigamf(\mu-\theta)N\bigr) \to c_1 >0
\quad\text{as $N\to\infty$}   %\label{mnCLT}\ee
\]
for some $\alpha>2/3$.  Then as $N\to\infty$, 
\[ N^{-\alpha/2} \Bigl\{ \log Z_{m,n} 
-\E \bigl(\log Z_{m,n}\bigr) \Bigr\}\] converges in distribution
to a centered normal distribution  with variance 
$c_1\trigamf(\theta)$. 
\label{clt-cor}\end{corollary}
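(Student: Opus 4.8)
The plan is to extract from $\log Z_{m,n}$ a genuinely i.i.d.\ sum of length of order $N^{\alpha}$, produced by the ``excess'' horizontal steps beyond the characteristic direction, and to absorb the remainder into an error that is $o(N^{\alpha/2})$ by the $N^{2/3}$ variance bound of Theorem \ref{var-bd-thm}. Concretely, I would put $m_1=\fl{\trigamf(\mu-\theta)N}$. Since $n=\trigamf(\theta)N$ and $\abs{m_1-\trigamf(\mu-\theta)N}<1$, the endpoint $(m_1,n)$ satisfies \eqref{mnN} (with, say, $\gamma=1$), so Theorem \ref{var-bd-thm} applies and gives $\Vvv(\log Z_{m_1,n})\le C_2N^{2/3}$. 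Because $m-\trigamf(\mu-\theta)N\to+\infty$, we have $m>m_1$ for all large $N$, and peeling off one column at a time along level $n$ gives the exact identity
\[
\log Z_{m,n}=\log Z_{m_1,n}+\sum_{i=m_1+1}^{m}\log U_{i,n},\qquad U_{i,n}:=\frac{Z_{i,n}}{Z_{i-1,n}} .
\]

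Next I would center. By Theorem \ref{burkethm} each $U_{i,n}$ satisfies $U_{i,n}^{-1}\sim\mathrm{Gamma}(\theta,1)$, hence $\E\log U_{i,n}=-\digamf(\theta)$, and summing and using \eqref{ElogZ} one checks $\sum_{i=m_1+1}^{m}\E\log U_{i,n}=\E\log Z_{m,n}-\E\log Z_{m_1,n}$. Therefore
\[
\log Z_{m,n}-\E\log Z_{m,n}=\bigl(\log Z_{m_1,n}-\E\log Z_{m_1,n}\bigr)+\sum_{i=m_1+1}^{m}\bigl(\log U_{i,n}-\E\log U_{i,n}\bigr).
\]
Multiplying by $N^{-\alpha/2}$: the first term has mean $0$ and variance $\le C_2N^{2/3-\alpha}\to0$ (here is where $\alpha>2/3$ enters), so it tends to $0$ in $L^2$ and hence in probability. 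For the second term I would invoke the full strength of Theorem \ref{burkethm}: along the fixed level $n$ the ratios $\{U_{i,n}:i\ge1\}$ are mutually independent with $U_{i,n}^{-1}\sim\mathrm{Gamma}(\theta,1)$ (they are the horizontal edge weights of the down-right path that comes down the $y$-axis to $(0,n)$ and then proceeds east). Consequently $\{\log U_{i,n}-\E\log U_{i,n}\}_{i\ge1}$ is an i.i.d.\ centered sequence with variance $\trigamf(\theta)$ by \eqref{ditri1} and moments of all orders, and since $m-m_1=c_1N^{\alpha}(1+o(1))$ the classical CLT gives
\[
N^{-\alpha/2}\sum_{i=m_1+1}^{m}\bigl(\log U_{i,n}-\E\log U_{i,n}\bigr)\ \Longrightarrow\ \cN\bigl(0,\,c_1\trigamf(\theta)\bigr).
\]

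Finally, combining the two pieces by Slutsky's theorem (one summand $\to0$ in probability, the other converging in distribution --- no joint independence required) yields the stated CLT for $N^{-\alpha/2}(\log Z_{m,n}-\E\log Z_{m,n})$. The one point that is not pure routine is the input from Theorem \ref{burkethm}: we need not just the marginal law of a single ratio but the \emph{mutual independence} of the whole family $\{U_{i,n}:i\ge1\}$ along a fixed horizontal line, i.e.\ the stationarity content of the Burke-type theorem. Given that, the rest is the elementary column decomposition of the partition function plus the classical CLT, with Theorem \ref{var-bd-thm} doing the work of showing that the fluctuations in the over-long horizontal direction --- of order $N^{\alpha}\gg N^{2/3}$ --- are the dominant ones.
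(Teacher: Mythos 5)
Your proposal is correct and is essentially the paper's own argument: the same decomposition $\log Z_{m,n}=\log Z_{m_1,n}+\sum_{i=m_1+1}^{m}\log U_{i,n}$ with $m_1=\fl{\trigamf(\mu-\theta)N}$, the variance bound of Theorem \ref{var-bd-thm} killing the first term at scale $N^{\alpha/2}$ since $\alpha>2/3$, and the classical CLT for the i.i.d.\ family $\{U_{i,n}\}$ supplied by Theorem \ref{burkethm}. Your explicit remark that one needs the mutual independence of the $U_{i,n}$ along the level $n$ (via the down-right path descending the $y$-axis and then heading east) is exactly the point the paper uses implicitly, so nothing is missing.
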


The quenched polymer measure $Q^\om_{m,n}$ is defined on paths 
$x_\centerdot\in\Pi_{m,n}$ by
\be  Q^\om_{m,n}(x_\centerdot)=  \frac1{Z_{m,n}}\prod_{k=1}^{m+n} Y_{x_k}  \label{Q}\ee
remembering convention \eqref{UV1}.  Integrating out the random 
environment $\w$ gives the annealed measure 
\[ \Pann_{m,n}(x_\centerdot) = \int Q^\w_{m,n}(x_\centerdot)\,\bP(d\w). \]
When the rectangle $\Lambda_{m,n}$  is understood, we drop the 
subscripts and write $\Pann=\E Q^\w$.  Notation will be further simplified
by writing $Q$ for $Q^\w$. 

We describe the fluctuations of the path $x_\centerdot$ under $\Pann$.
The next result shows that $N^{2/3}$ is the correct order
of magnitude of  the fluctuations of the path. 
Let $\vva(j)$ and $\vvb(j)$ denote the left- and rightmost points of the path
on the horizontal  line with ordinate $j$: 
 \be
\vva(j)=\min\{i\in\{0,\dotsc,m\}:  \exists k \ \text{such that} \  x_k=(i,j) \} \label{defv}\ee
and 
\be \vvb(j)=\max\{i\in\{0,\dotsc,m\}:  \exists k \ \text{such that} \    x_k=(i,j) \}. \label{defv1}\ee

\begin{theorem}  Assume \eqref{distr4} and let $(m,n)$ be as in \eqref{mnN}.
 Let $0\le \tau<1$.  
 Then there exist constants $C_1, C_2<\infty$ such that 
  for $N\ge 1$  and $b\ge C_1$,  
\be \Pann\bigl\{ \vva(\fl{\tau n})<\tau m-bN^{2/3}
\ \ \text{or}\ \  \vvb(\fl{\tau n})>\tau m+bN^{2/3}\bigr\}
\le C_2b^{-3}. 
\label{path-bd-1}\ee
Same bound holds for the  vertical counterparts of $\vva$ and $\vvb$. 

Let $0<\tau<1$.  Then given $\e>0$, there exists $\delta>0$ such that 
\be  \varlimsup_{N\to\infty} \Pann\{ 
\text{ $\exists k$ such that $\abs{\,x_k-(\tau m, \tau n)}
\le \delta N^{2/3}$ } \} 
\le \e. 
\label{path-bd-2}\ee
\label{path-bd-thm}\end{theorem}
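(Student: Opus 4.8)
The plan is to derive the path fluctuation bounds from the free-energy variance estimate of Theorem \ref{var-bd-thm}, using the quenched polymer measure as a bridge. The basic mechanism is the following: if the path at height $\fl{\tau n}$ is forced to pass far to the left of its typical location, then the polymer weight must be computed by concatenating two sub-polymers whose combined free energy is anomalously small, and the probability cost of such an event is controlled by the variance upper bound applied to the pieces. I would first fix $j=\fl{\tau n}$ and decompose each path $x_\centerdot\in\Pi_{m,n}$ according to the point $(\vva(j),j)$ where it first reaches line $j$; this gives
\[
Z_{m,n} \;=\; \sum_{i=0}^{m} Z^{(1)}_{(0,0)\to(i,j)}\cdot Y_{i,j}\cdot Z^{(2)}_{(i,j+1)\to(m,n)}\cdot\big(\text{correction for the step into line }j\big),
\]
or more cleanly a decomposition at the exit point from the sub-rectangle $\Lambda_{m,j}$. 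The event $\{\vva(\fl{\tau n})<\tau m - bN^{2/3}\}$ under $Q^\w$ then forces the quenched mass to be carried by terms with small left-endpoint $i$, so that on this event
\[
Q^\w\{\vva(j)<\tau m-bN^{2/3}\} \;\le\; \frac{\sum_{i<\tau m-bN^{2/3}} Z^{(1)}_{\cdot\to(i,j)}\,Y_{i,j}\,Z^{(2)}_{(i,j)\to(m,n)}}{Z_{m,n}}.
\]

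The key step is to bound, for each fixed $i$ with $i<\tau m-bN^{2/3}$, the ratio $Z^{(1)}_{\cdot\to(i,j)}Z^{(2)}_{(i,j)\to(m,n)}/Z_{m,n}$ by exploiting that the ``off-characteristic'' geometry makes the numerator's mean free energy smaller than $\E\log Z_{m,n}$ by an amount of order $b^2 N^{1/3}$ — this is where the strict concavity of the limiting free energy $-\digamf(\theta)\trigamf(\mu-\theta)-\digamf(\mu-\theta)\trigamf(\theta)$ in the direction variable enters, via the explicit formula \eqref{ElogZ} and a Taylor expansion of $\digamf$ around the characteristic direction. Concretely, for the sub-rectangle from $(0,0)$ to $(i,j)$, the appropriate boundary parameter (call it $\theta'$) shifts away from $\theta$ by an amount of order $bN^{-1/3}$, and $\E\log Z^{(1)}_{\cdot\to(i,j)}+\E\log Z^{(2)}_{(i,j)\to(m,n)} - \E\log Z_{m,n}$ picks up a deficit $-cb^2N^{1/3}$ from the second-order term. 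Then Chebyshev applied to each piece — each of which has variance $O(N^{2/3})$ by Theorem \ref{var-bd-thm}, after re-anchoring and possibly re-choosing boundary parameters as in the setup of that theorem — controls the fluctuation of each $\log Z$ by $O(N^{1/3})$, which for $b$ large is dominated by the $b^2N^{1/3}$ deficit. Summing the resulting probabilities over $i$ and over the symmetric event for $\vvb$ produces a bound of the form $C_2 b^{-3}$ after optimizing, and taking $\E$ of the quenched bound gives \eqref{path-bd-1}. The vertical counterparts follow by the same argument with the roles of the coordinates exchanged.

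For \eqref{path-bd-2}, which asserts that the path does not come within $\delta N^{2/3}$ of the diagonal point $(\tau m,\tau n)$ for $\tau$ strictly interior, I would proceed dually: if the path passes through a point within distance $\delta N^{2/3}$ of $(\tau m,\tau n)$, then $Z_{m,n}$ is carried by a product $Z^{(1)}_{\cdot\to p}\,Z^{(2)}_{p\to(m,n)}$ with $p$ near $(\tau m,\tau n)$, and now the concavity deficit works the other way — passing exactly through the characteristic-direction point is optimal in mean, so one needs a matching \emph{lower} bound on $Z_{m,n}$ showing it typically exceeds $\exp\{\E\log Z_{m,n}+O(N^{1/3})\}$ while the constrained sum near $p$ is at most $\exp\{\E\log Z_{m,n}+C\delta^{?}N^{1/3}\}$ times a polynomial factor. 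Making $\delta$ small makes the constrained contribution negligible; the complementary statement (the path must avoid a neighborhood) then needs a covering argument over the $O(\delta^{-1})$ many candidate points $p$ on each line, combined with a uniform lower-deviation bound for $\log Z_{m,n}$ which again comes from Theorem \ref{var-bd-thm} via Chebyshev. I expect the main obstacle to be the bookkeeping of the sub-rectangle boundary weights: after the decomposition at $(i,j)$ or $p$, the two sub-polymers do not automatically inherit independent gamma boundary weights of the form \eqref{distr4}, so one must either argue that the ratios $U_{i,j},V_{i,j}$ produced by Theorem \ref{burkethm} supply exactly the right stationary boundary on the inner edges, or else reduce to the i.i.d.\ (no-boundary) variance bound that Theorem \ref{var-bd-thm} yields as a corollary. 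Getting this re-anchoring right — so that each piece genuinely falls under the hypotheses of the variance theorem with constants uniform in the split point — is the delicate part; the concavity/Taylor estimate and the Chebyshev steps are then routine.
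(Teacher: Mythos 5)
Your scheme for \eqref{path-bd-1} (curvature of the free energy plus Chebyshev, summed over the split point) has a quantitative gap at precisely the step you call routine. For a fixed split point $i$ at transversal distance $u=bN^{2/3}$, second-moment information gives at best a probability of order $b^{-3}$ or $b^{-4}$; but you must then sum over the order-$N$ admissible split points (about $N^{2/3}$ of them per unit increment of $b$), and the resulting sum behaves like $N^{2/3}b^{-2}$, which diverges with $N$ instead of giving $C_2b^{-3}$. A union bound with only polynomially decaying one-point bounds cannot close, and no chaining/monotonicity fix is supplied. Moreover the variance inputs you invoke are not available: Theorem \ref{var-bd-thm} covers only the boundary model in near-characteristic rectangles \eqref{mnN}; the paper proves no variance bound for the bulk-only piece $Z_{(i,\fl{\tau n}+1),(m,n)}$ (the no-boundary estimate \eqref{Zmom2.9} has only $b^{-3/2}$ tails, hence no second moment, and is itself proved later \emph{from} the results of this section, so using it here would be circular). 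What you flag as delicate bookkeeping is in fact the entire proof in the paper: imposing on the sub-rectangle $\Lambda_{(\fl{\tau m},\fl{\tau n}),(m,n)}$ the boundary ratios \eqref{UVkl} inherited from the full system, identity \eqref{Zkl4} together with Burke's property (Theorem \ref{burkethm}) shows that $\vvb(\fl{\tau n})-\fl{\tau m}$ under $Q_{m,n}$ has exactly the law of the exit point $\xexit$ of a stationary boundary model with scaling parameter $(1-\tau)N$, and the exit-point tail bounds already established (Lemma \ref{auxlm7}, Corollary \ref{coraux3}) then give $Cb^{-3}$ directly, with no union bound over split points and no curvature/Chebyshev step at the interior point; the left tail of $\vva$ is reduced to the same estimate for $\yexit$ via $\vva(\fl{\tau n})<\fl{\tau m}-u\Rightarrow \wwb(\fl{\tau m}-u)\ge\fl{\tau n}$.

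The proposal for \eqref{path-bd-2} misreads the nature of that statement: it is an anti-localization bound at the \emph{typical} crossing location, not an upper fluctuation bound. Passing within $\delta N^{2/3}$ of $(\tau m,\tau n)$ is free-energetically optimal to leading order, so the curvature deficit you hope to exploit is only $O(\delta^2N^{1/3})$ and works in the wrong direction; no concentration upper bound on $\log Z$, however sharp, can make this probability small as $\delta\searrow 0$, because such bounds are neutral about where inside an $O(N^{2/3})$ window the path crosses. What is needed is that the crossing point is genuinely spread over scale $N^{2/3}$, i.e.\ an anti-concentration input. The paper gets this from Proposition \ref{lb-prop1}, $\lim_{\delta\searrow0}\varlimsup_N\Pann\{1\le\xexit\le\delta N^{2/3}\}=0$, transferred to the interior point by the same re-anchoring as above; the proof of that proposition rests on a tilted-parameter comparison, the reversed/dual measure of Lemma \ref{lemQ*1}, and a random-walk-to-Brownian-motion argument using the law of the iterated logarithm, none of which appears in your sketch. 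So the second half of the proposal is not a viable route as described.
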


Presently we do not have sharp quenched results.  
From  Lemma \ref{auxlm7} and the proof of Theorem \ref{path-bd-thm}
in Section \ref{sec:path-bd}  one can extract  estimates
on the $\bP$-tails of the quenched probabilities of the events
in \eqref{path-bd-1} and \eqref{path-bd-2}. 

\bigskip

We turn to results for the model without boundaries, by 
restricting ourselves to the positive quadrant $\bN^2$.  Define
the  partition function of a general rectangle 
$\{k,\dotsc,m\}\times\{\ell,\dotsc,n\}$ by 
\be
Z_{(k,\ell),(m,n)}=\sum_{x_\centerdot \in\Pi_{(k,\ell),(m,n)}} \prod_{i=1}^{m-k+n-\ell} Y_{x_i}
\label{Z3}\ee
where    $\Pi_{(k,\ell),(m,n)}$ is the collection
of up-right paths $x_\centerdot=(x_i)_{i=0}^{m-k+n-\ell } $ from $x_0=(k,\ell)$ to
$x_{m-k+n-\ell }=(m,n)$. Admissible steps are always 
$x_{i+1}-x_i=e_1=(1,0)$ or  $x_{i+1}-x_i=e_2=(0,1)$. 
We have chosen not to include the weight of the southwest corner $(k,\ell)$.  
The earlier definition \eqref{Z0.0} is the special case  
 $Z_{m,n}=Z_{(0,0),(m,n)}$.  Also we stipulate
that  when the rectangle reduces to a point,  $Z_{(k,\ell),(k,\ell)}=1$.

In particular, $Z_{(1,1),(m,n)}$ gives us partition functions that only involve
the bulk weights $\{Y_{i,j}: i,j\in\bN\}$.  The assumption on their distribution is as
before, with a fixed parameter $0<\mu<\infty$: 
\be
\text{ $\{Y_{i,j}: i,j\in\bN\}$ are i.i.d.\ with common distribution 
$Y_{i,j}^{-1}$ $\sim$ Gamma$(\mu, 1)$.} \label{distr4.1}\ee

We define the limiting free energy.  The identity
(see e.g. \cite[(2.11)]{arti-gamma} or \cite[Section 6.4]{handbook})
\[  \trigamf(x)=\sum_{k=0}^\infty \frac1{(x+k)^2} \]
shows that $\trigamf(0+)=\infty$.  Thus, given $0<s,t<\infty$,  there is a unique
$\theta=\theta_{s,t}\in(0,\mu)$ such that 
\be \frac{\trigamf(\mu-\theta)}{\trigamf(\theta)}=\frac{s}t. \label{trig-st}\ee
Define 
\be \freee_{s,t}(\mu)= -\,\bigl(  s \digamf(\theta_{s,t}) + t \digamf(\mu-\theta_{s,t}) \bigr).
\label{def-free}\ee
It can be verified that for a fixed $0<\mu<\infty$,  $\freee_{s,t}(\mu)$ is a 
continuous function of $(s,t)\in\bR_+^2$ with boundary values  
\[  \freee_{0,t}(\mu)=\freee_{t,0}(\mu)=-t\digamf(\mu). \]
Here is the result for the free energy of the polymer without boundary but still
with fixed endpoint.  
The constants in this theorem  depend on $(s,t,\mu)$.  

\begin{theorem}  Assume \eqref{distr4.1} and let $0<s,t<\infty$.  
We have the law of large numbers
\be
\lim_{N\to\infty} N^{-1}\log Z_{(1,1),(\fl{Ns},\fl{Nt})}  =\freee_{s,t}(\mu)
\quad \text{$\P$-a.s.}  
\label{Zlln}\ee
% Assume $(m,n)$ satisfies  \eqref{mnNgamma}.  
 There exist finite constants $N_0 $ and $C_0 $ such 
that, for  $b\ge 1$ and $N\ge N_0$, 
\be 
\P\bigl[  \; \abs{ \log Z_{(1,1),(\fl{Ns},\fl{Nt})} - N\freee_{s,t}(\mu) }\ge bN^{1/3} \,\bigr] 
\le C_0 b^{-3/2}. \label{Zmom2.9}\ee
\label{Zno-bd-thm}\end{theorem}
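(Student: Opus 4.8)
The plan is to transfer the sharp estimates for the boundary model in Theorems~\ref{var-bd-thm}--\ref{path-bd-thm} to the bulk model by a sandwiching argument, paying a price of at most a constant factor of boundary weights that is absorbed into the $bN^{1/3}$ scale. First I would fix $\mu$ and $0<s,t<\infty$, let $\theta=\theta_{s,t}\in(0,\mu)$ be the solution of \eqref{trig-st}, and note that with this choice the target direction $(s,t)$ is \emph{proportional} to the characteristic direction $(\trigamf(\mu-\theta),\trigamf(\theta))$, so after reparametrizing $N$ by a constant the condition \eqref{mnNgamma} becomes an instance of \eqref{mnN}. Then I would build a coupled boundary model on the same rectangle: keep the bulk weights $\{Y_{i,j}:i,j\in\bN\}$ from \eqref{distr4.1}, and adjoin \emph{independent} boundary weights $\{U_{i,0}\}$, $\{V_{0,j}\}$ distributed as in \eqref{distr4} with this $\theta$. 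Write $Z^{\square}_{m,n}$ for the resulting boundary partition function, to which Theorem~\ref{var-bd-thm} and \eqref{ElogZ} apply, giving $\E[\log Z^{\square}_{m,n}] = -m\digamf(\theta)-n\digamf(\mu-\theta)$ (which by \eqref{mnNgamma} equals $N\freee_{s,t}(\mu)$ up to $O(N^{2/3})$... actually up to $O(N^{2/3})$, so one must be slightly careful: the centering in \eqref{Zmom2.9} is $N\freee_{s,t}(\mu)$, and $m\digamf(\theta)+n\digamf(\mu-\theta)$ differs from it by $O(N^{2/3})$, which is \emph{larger} than the fluctuation scale $N^{1/3}$ — see the obstacle paragraph below).

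The core comparison is a two-sided bound of the form
\[
Z_{(1,1),(m,n)} \ \le\ Z^{\square}_{m,n} \ \le\ \Bigl(\textstyle\sum_{i=1}^m \prod_{k=1}^i U_{k,0} + \sum_{j=1}^n \prod_{k=1}^j V_{0,k}\Bigr)\cdot\max\bigl(1,\text{stuff}\bigr)\cdot Z_{(1,1),(m,n)} + (\text{boundary-only paths}).
\]
More cleanly: every up-right path from $(0,0)$ to $(m,n)$ in the boundary model either (a) leaves the axes immediately, after which it coincides with a path counted by $Z_{(1,1),(m,n)}$ times a single boundary weight, or (b) travels along an axis for a while before entering the bulk. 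Summing the geometric-type contributions of the boundary weights, one gets deterministically
\[
Z_{(1,1),(m,n)} \ \le\ Z^{\square}_{m,n} \ \le\ R_{m,n}\cdot Z_{(1,1),(m,n)} + S_{m,n},
\]
where $R_{m,n}$ collects products of at most $m$ or $n$ boundary weights along an axis and $S_{m,n}$ is the partition function of paths confined to the boundary. Taking logarithms, $0 \le \log Z^{\square}_{m,n} - \log Z_{(1,1),(m,n)} \le \log R_{m,n} + \log(1+S_{m,n}/(R_{m,n}Z_{(1,1),(m,n)}))$. Since $\log U_{k,0}$ and $\log V_{0,k}$ are i.i.d.\ with finite exponential moments (reciprocal-gamma, so $-\log U\sim$ a shifted gamma-log), $\log R_{m,n}$ is a maximum over $O(N)$ partial sums of i.i.d.\ mean-$(-\digamf(\theta))$ or $(-\digamf(\mu-\theta))$ variables — \emph{but} these means are of order $N$, not $N^{1/3}$, so this crude bound is not good enough and the sandwiching must instead be done at the level of the \emph{centered} log-partition functions, using that along the characteristic direction the boundary and bulk free-energy \emph{densities} agree. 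The right statement is: $\log Z^{\square}_{m,n} - \log Z_{(1,1),(m,n)}$ is tight, in fact has exponential tails uniformly in $N$, because the boundary model's stationarity forces the two to track each other; this is exactly the type of estimate proved in \cite{bala-cato-sepp} for the corner growth model and is presumably extracted from Lemma~\ref{auxlm7} or the proof of Theorem~\ref{path-bd-thm} here.

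Granting that $\P(|\log Z^{\square}_{m,n}-\log Z_{(1,1),(m,n)}|\ge bN^{1/3})\le C b^{-3/2}$ — equivalently, that the discrepancy is $O(1)$ with good tails — the upper bound \eqref{Zmom2.9} for the bulk model follows from the upper bound of Theorem~\ref{var-bd-thm} via Chebyshev/moment estimates at order $3/2$ (the paper's stated $b^{-3/2}$ rate suggests the variance bound is boosted to a $3/2$-moment bound on $|\log Z^{\square}_{m,n}-\E\log Z^{\square}_{m,n}|/N^{1/3}$, which is the standard output of the Cator--Groeneboom--type argument), combined with the deterministic fact that $\E\log Z^{\square}_{m,n} = -m\digamf(\theta)-n\digamf(\mu-\theta)$ lies within $O(N^{2/3}\cdot N^{-1}\cdot)$... again the centering issue. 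For the law of large numbers \eqref{Zlln}, one takes $m=\fl{Ns}$, $n=\fl{Nt}$, applies \eqref{Zmom2.9} with $b=\e N^{1/3}\cdot N^{?}$ — actually with $b$ fixed large — uses Borel--Cantelli along a subsequence and monotonicity/superadditivity of $\log Z_{(1,1),(m,n)}$ to fill gaps, concluding $N^{-1}\log Z_{(1,1),(\fl{Ns},\fl{Nt})}\to \lim N^{-1}(-m\digamf(\theta)-n\digamf(\mu-\theta)) = -s\digamf(\theta)-t\digamf(\mu-\theta)=\freee_{s,t}(\mu)$ by \eqref{def-free}.

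The main obstacle is reconciling the centering constants. The boundary model has exact mean $\E\log Z^{\square}_{m,n} = -m\digamf(\theta)-n\digamf(\mu-\theta)$ by \eqref{ElogZ}, but \eqref{mnNgamma} only pins $(m,n)$ to $(Ns,Nt)$ within $\gamma N^{2/3}$, so $-m\digamf(\theta)-n\digamf(\mu-\theta)$ can differ from $N\freee_{s,t}(\mu) = -Ns\,\digamf(\theta)-Nt\,\digamf(\mu-\theta)$ by as much as $\gamma N^{2/3}(|\digamf(\theta)|+|\digamf(\mu-\theta)|)$, which \emph{dwarfs} the $bN^{1/3}$ fluctuation scale in \eqref{Zmom2.9}. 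Hence \eqref{Zmom2.9} as literally stated with centering $N\freee_{s,t}(\mu)$ can only hold when $(m,n)$ is even closer to $(Ns,Nt)$ than $N^{2/3}$ — or the intended reading is that the constant $\gamma$ is small, or that $\freee$ is evaluated at the \emph{actual} direction $m/n$. I would resolve this by re-centering: prove \eqref{Zmom2.9} with $N\freee_{s,t}(\mu)$ replaced by $-m\digamf(\theta)-n\digamf(\mu-\theta)$ (the honest mean), and observe that the difference, being $O(N^{2/3})$, must be either benign because of how $\freee$ is defined along characteristic rays, or else the theorem's $\gamma$ in \eqref{mnNgamma} is understood to shrink $N^{2/3}$ fluctuations down — most likely the paper intends \eqref{mnNgamma} with the \emph{correct} direction so that $m/n\to s/t$ and the linear term $m\digamf(\theta)+n\digamf(\mu-\theta) - N\freee_{s,t}(\mu)$ is genuinely $o(N^{1/3})$; under that reading the argument closes. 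Modulo this bookkeeping, the proof is: couple to the boundary model, sandwich, invoke Theorem~\ref{var-bd-thm} (upgraded to a $3/2$-moment bound), and handle the $O(1)$ boundary-correction term with its exponential tails.
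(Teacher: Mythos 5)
Your overall architecture --- couple the bulk partition function to the stationary boundary model with parameter $\theta=\theta_{s,t}$, control the difference of the two log-partition functions, and then import the variance bound of Theorem \ref{var-bd-thm} --- is exactly the paper's strategy. But the one step you leave unproved is where essentially all the work lies, and the shortcut you offer for it is incorrect. You assert that $\log Z_{m,n}-\log Z_{(1,1),(m,n)}$ is tight (``$O(1)$ with exponential tails, because stationarity forces the two to track each other'') and call this equivalent to the needed bound $\P\bigl[\abs{\log Z_{m,n}-\log \Zalt_{(1,1),(m,n)}}\ge bN^{1/3}\bigr]\le Cb^{-3/2}$. These are not equivalent, and the $O(1)$ claim is false: both free energies fluctuate on the scale $N^{1/3}$ and so does their difference (in the last-passage analogue the stationary and bulk quantities even have different limit laws), which is why the paper's Lemma \ref{ZZlm1} is formulated precisely at the $bN^{1/3}$ scale with a $b^{-3/2}$ tail. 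Proving that lemma is the substance of Section \ref{sec:nobd}: the left tail comes from $Z_{m,n}\ge (U_{1,0}+V_{0,1})\Zalt_{(1,1),(m,n)}$ via the decomposition \eqref{ZZaux1} (note your claimed deterministic inequality $Z_{(1,1),(m,n)}\le Z_{m,n}$ is not quite true as stated), while the right tail requires (i) showing through Lemma \ref{auxlm7}, applied with the scaling parameter $M$ of \eqref{defMN}, that with high $\P$-probability the quenched measure gives mass at least $\tfrac12$ to exit points $\le \sqrt b N^{2/3}$, and (ii) the estimate of Lemma \ref{lb-aux-lm3}(ii) on $Z_{m,n}(0<\xexit\le \sqrt b N^{2/3})/\Zalt_{(1,1),(m,n)}$, which itself rests on the comparison \eqref{Zcomp2} against a reversed environment with shifted parameter $\lambda=\theta+rN^{-1/3}$, the Burke property, and a maximal inequality for the resulting random walk. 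Your $R_{m,n}$, $S_{m,n}$ sandwich (which you rightly discard) does not substitute for any of this, so the core of the theorem is missing from your proposal. A smaller point: no $3/2$-moment upgrade of Theorem \ref{var-bd-thm} is needed; plain Chebyshev with the variance bound gives $Cb^{-2}\le Cb^{-3/2}$ for the boundary-model term, which is what the paper uses.

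Your centering worry is a fair reading of the statement: for general $(m,n)$ in \eqref{mnNgamma} the exact mean $-m\digamf(\theta)-n\digamf(\mu-\theta)$ of the boundary model can indeed differ from $N\freee_{s,t}(\mu)$ by order $N^{2/3}$, and the paper silently identifies the two in its final Chebyshev step (the identification is exact, up to integer parts, for $(m,n)=(\fl{Ns},\fl{Nt})$, which is the case actually used later, e.g.\ in Theorem \ref{tot-f-thm} with $s=t=1/2$). But flagging this and proposing to ``re-center'' neither resolves it nor repairs the missing comparison lemma. Finally, for \eqref{Zlln} the paper converts \eqref{Zmom2.9} into convergence in probability and invokes the subadditive ergodic theorem for almost sure convergence; your Borel--Cantelli route would need $b=b_N$ growing (the bound is not summable in $N$ for fixed $b$), a subsequence, and a gap-filling monotonicity argument that you have not supplied.
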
 

In particular, we get the moment bound 
\be 
\E\biggl\{ \, \biggl\lvert \frac{\log Z_{(1,1),(\fl{Ns},\fl{Nt})} - N\freee_{s,t}(\mu) }
{N^{1/3}} \biggr\rvert^p \biggr\} \le C(s,t,\mu,p) <\infty \label{Zmom3}\ee
for $N\ge N_0(s,t,\mu)$ and   $1\le p<3/2$.   
The theorem is proved by relating $Z_{(1,1),(\fl{Ns},\fl{Nt})}$ to a polymer
with a boundary. Equation \eqref{trig-st} picks the correct 
boundary parameter $\theta$.  Presently we do not have a matching
lower bound for \eqref{Zmom2.9}. 

In a general rectangle the quenched polymer distribution of a path
$x_\centerdot \in\Pi_{(k,\ell),(m,n)}$ is 
\be
Q_{(k,\ell),(m,n)}(x_\centerdot)
=\frac1{Z_{(k,\ell),(m,n)}}{ \prod_{i=1}^{m-k+n-\ell} Y_{x_i}}.  \label{Q3}\ee
As before the annealed distribution is 
$\Pann_{(k,\ell),(m,n)}(\cdot)=\E Q_{(k,\ell),(m,n)}(\cdot)$.  The
upper  fluctuation
bounds for the path in the model  with boundaries can be 
extended to the model without boundaries. 
Here we can again allow  the endpoint  $(m,n)$ to deviate from the characteristic direction:
 \be \abs{ m-Ns }\le  \gamma N^{2/3} 
\quad\text{and}\quad 
 \abs{n- Nt}
\le \gamma N^{2/3}  \label{mnNgamma}\ee
for a constant $\gamma$. 
The constants in this theorem  depend on $(s,t,\mu, \gamma)$.  

\begin{theorem}   Assume \eqref{distr4.1}, fix $0<s,t<\infty$, and assume 
\eqref{mnNgamma}.
 Let $0\le \tau<1$.  
 Then there exist finite constants  
$C$, $C_0$ and $N_0$    such that  
  for $N\ge N_0$  and $b\ge C_0$,  
\be\begin{aligned}
 \Pann_{(1,1),(m,n)}\bigl\{ &
\vva(\fl{\tau n})<\tau m-bN^{2/3} \\
 &\qquad \qquad \text{or} \ \ 
 \vvb(\fl{\tau n})>\tau m+bN^{2/3}\bigr\} 
 \le Cb^{-3}. 
\end{aligned}\label{path-nobd-1}\ee
Same bound holds for the  vertical counterparts of $\vva$ and $\vvb$. 
\label{path-nobd-thm}\end{theorem}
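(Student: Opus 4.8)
The plan is to transfer the path-fluctuation bound \eqref{path-bd-1} for the model \emph{with} boundaries to the bulk model \eqref{distr4.1} \emph{without} boundaries, via the same comparison that proves Theorem \ref{Zno-bd-thm}. The first step is to set up the coupling: put down boundary weights $\{U_{i,0}, V_{0,j}\}$ on the south and west edges of the quadrant, independent of the bulk weights $\{Y_{i,j}: i,j\ge 1\}$, with $U_{i,0}^{-1}\sim\text{Gamma}(\theta,1)$ and $V_{0,j}^{-1}\sim\text{Gamma}(\mu-\theta,1)$, where $\theta=\theta_{s,t}$ is chosen by \eqref{trig-st} so that the characteristic direction of the bordered model is exactly $(s,t)$ (up to the $O(N^{2/3})$ window of \eqref{mnN}, which is what \eqref{mnNgamma} guarantees). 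On this enlarged probability space both $Z_{(1,1),(m,n)}$ and the bordered partition function $Z_{m,n}=Z_{(0,0),(m,n)}$ are defined, and Theorem \ref{path-bd-thm} applies verbatim to the bordered polymer measure $\Pann^{\text{bord}}$.

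Next I would relate the two path laws. A path $x_\centerdot\in\Pi_{(1,1),(m,n)}$ from $(1,1)$ to $(m,n)$ is, after prepending one of the two corner moves from $(0,0)$, almost a path in $\Pi_{(0,0),(m,n)}$; more usefully, any up-right path from $(0,0)$ to $(m,n)$ that once it leaves the axes never returns to them traverses exactly such a bulk path after its initial boundary segment. The key quantitative input is that the bordered polymer spends only $O(1)$ steps on the boundary: using the Burke-type structure (Theorem \ref{burkethm}) one controls the probability that the bordered path's first entry into $\bN^2$ happens at a lattice point far from the corner. Concretely, $\Pann^{\text{bord}}\{\text{path enters }\bN^2 \text{ at }(i,1)\text{ or }(1,j)\text{ with }i\vee j\ge r\}$ should decay like a ratio of partition functions that, after taking $\bP$-expectation and using the exact marginals of the $U,V,Z$ ratios, is summably small in $r$. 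Given this, the bulk annealed path measure $\Pann_{(1,1),(m,n)}(\cdot)$ is, up to an $O(N^{-c})$ or even $O(1)$ absolutely-continuous distortion on the event $\{\vva(\fl{\tau Nt}),\vvb(\fl{\tau Nt})\text{ behave}\}$, dominated by the bordered annealed measure: for any event $B$ depending only on the path at heights $\ge 1$, $\Pann_{(1,1),(m,n)}(B)\le C\,\Pann^{\text{bord}}_{m,n}(B')$ for a slightly enlarged event $B'$. Applying this with $B$ the bad event in \eqref{path-nobd-1} and invoking \eqref{path-bd-1} for $\Pann^{\text{bord}}_{m,n}$ (whose characteristic direction is $(s,t)$) gives the claimed $Cb^{-3}$ bound; adjusting $b$ by the $O(1)$ boundary overshoot only changes constants, and the shift from $\tau m$ to $\tau Ns$ is absorbed since $|m-Ns|\le\gamma N^{2/3}$.

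The main obstacle is making the comparison between $\Pann_{(1,1),(m,n)}$ and $\Pann^{\text{bord}}_{m,n}$ quantitatively clean. One must show that conditioning the bordered polymer to avoid the boundary after step one (or controlling how far along the boundary it wanders) costs only a bounded factor in the annealed measure, uniformly in $N$; this is where the explicit gamma marginals and the stationarity from Theorem \ref{burkethm} are essential, because a naive bound on $Z_{m,n}/Z_{(1,1),(m,n)}$ has heavy tails and one needs the $\bP$-distribution of that ratio, not a worst-case estimate. A secondary technical point is aligning the two coordinate systems: the quantities $\vva(j),\vvb(j)$ in \eqref{defv}--\eqref{defv1} are defined for paths confined to $\Lambda_{m,n}$, so one checks that restricting attention to the sub-rectangle $\{1,\dots,m\}\times\{1,\dots,n\}$ and to heights $j=\fl{\tau Nt}\ge 1$ (true for $\tau>0$; the case $\tau=0$ is trivial since $\vva(0)=0$ by construction) makes the two notions coincide on the relevant event. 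Once the comparison lemma is in place, the deduction from \eqref{path-bd-1} is immediate and the vertical counterparts follow by the same argument with the roles of the axes and of $\theta,\mu-\theta$ interchanged.
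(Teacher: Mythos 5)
Your overall strategy (couple in boundary weights with $\theta=\theta_{s,t}$ chosen by \eqref{trig-st}, and transfer Theorem \ref{path-bd-thm} from the bordered model to the bulk model) is the right starting point and is indeed how the paper proceeds, but the step you make the comparison with contains a genuine gap. Your central claim is an annealed domination $\Pann_{(1,1),(m,n)}(B)\le C\,\Pann^{\text{bord}}_{m,n}(B')$ with a constant $C$ uniform in $N$, justified by the assertion that the bordered polymer ``spends only $O(1)$ steps on the boundary'' with summable tails in the entry distance $r$. That assertion is false in the characteristic direction: by Corollary \ref{coraux3} and Proposition \ref{lb-prop1} the exit point $\xexit\vee\yexit$ is of order exactly $N^{2/3}$, so the probability that the bordered path enters $\bN^2$ within $O(1)$ of the corner is vanishing (of order $N^{-2/3}$-ish), and conditioning on corner entry costs a factor that diverges with $N$ — fatal when the target bound $Cb^{-3}$ must hold uniformly in $N$ with $b$ a constant. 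Nor can the domination be rescued through an $L^1$ bound on the natural quenched Radon--Nikodym factor $Z_{m,n}\big/\bigl((U_{1,0}+V_{0,1})\Zalt_{(1,1),(m,n)}\bigr)$: by Lemma \ref{ZZlm1} its logarithm fluctuates on scale $N^{1/3}$ with only polynomial ($b^{-3/2}$) tails, so its expectation is not bounded uniformly in $N$. You acknowledge that one needs the $\P$-distribution of this ratio rather than a worst-case bound, but the proposal never supplies the mechanism that makes the loss affordable, and as stated the comparison lemma you want is not true for general events.

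The mechanism that actually closes this is to work at the quenched level and exploit that the bad path event is \emph{exponentially} unlikely under the bordered quenched measure, which beats the $e^{O(N^{1/3})}$ fluctuations of the ratio. Concretely, the paper decomposes over the vertical edge where the path crosses height $\ell=\fl{\tau n}$ and uses \eqref{ZZaux1} to get the exact quenched inequality
\[
Q_{(1,1),(m,n)}\{\vvb(\ell)\ge k+u\}\;\le\;\frac{Q_{m,n}\{\vvb(\ell)\ge k+u\}}{U_{1,0}+V_{0,1}}\cdot\frac{Z_{m,n}}{\Zalt_{(1,1),(m,n)}}\,,
\]
then bounds $\P\bigl[Q_{(1,1),(m,n)}\{\vvb(\ell)\ge k+u\}>h\bigr]$ for $h\in(b^{-3},1)$ by three terms: $\P(U_{1,0}+V_{0,1}\le b^{-3})$, the probability that $Z_{m,n}/\Zalt_{(1,1),(m,n)}\ge e^{cb^2N^{1/3}}$ (controlled by \eqref{Zprob5}, i.e.\ Lemma \ref{lb-aux-lm3}(ii) plus Lemma \ref{auxlm7}), and the probability that the bordered quenched probability exceeds the matching exponentially small threshold, which via the identity \eqref{Zkl6} ($Q_{m,n}\{\vvb(\ell)\ge k+u\}\overset{d}=Q_{m-k,n-\ell}\{\xexit\ge u\}$) is handled by \eqref{aux8.6}; integrating over $h$ gives $Cb^{-3}$. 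The left tail of $\vva$ is then obtained by the environment reversal $\wt Y_{i,j}=Y_{m-i+1,n-j+1}$, which converts the $\vvb$ bound into a $\vva$ bound — a step your sketch also leaves implicit. So the missing ingredient is precisely this interplay between the exponentially small quenched bordered probability and the $N^{1/3}$-scale ratio fluctuations; without it, the bounded-distortion comparison you posit does not hold.
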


Next we drop the restriction on the endpoint, and extend the upper bounds
to the point-to-line 
 polymer with unrestricted endpoint and no boundaries. Given the value
 of the parameter $N\in\bN$, the set of admissible paths   is 
 $\cup_{1\le k\le N-1} \Pi_{(1,1),(k,N-k)}$, namely the set of all up-right paths  
 $x_\centerdot=(x_k)_{0\le k\le N-2}$  that start at $x_0=(1,1)$ and 
 whose endpoint $x_{N-2}$ lies  on the line $x+y=N$. 
The quenched polymer probability of such a path is 
\[  Q_N^{\text{\rm p2l}}(x_\centerdot)= \frac1{Z_N^{\text{\rm p2l}}}\prod_{k=1}^{N-2}Y_{x_k} \]
with   the  
partition function  (superscript p2l stands for point-to-line)  
\[   Z_N^{\text{\rm p2l}} =\sum_{k=1}^{N-1} Z_{(1,1),(k, N-k)}.\]
The annealed measure is  $P_N^{\text{\rm p2l}}(\cdot)=\E Q_N^{\text{\rm p2l}}(\cdot)$. 
We collect all the results in one theorem, proved in Section \ref{sec:tot}. 
In particular,  \eqref{fa-5}  below shows that the fluctuations 
of the endpoint of the path are of order at most $N^{2/3}$.   
Statement \eqref{fa-3} in the proof gives bounds on the quenched probability 
of a deviation.  
 
\begin{theorem}
Fix $0<\mu<\infty$ and assume weight distribution \eqref{distr4.1}. 
We have the law of large numbers
\be
\lim_{N\to\infty} N^{-1}\log Z_N^{\text{\rm p2l}}
=\freee_{1/2,1/2}(\mu)=-\digamf(\mu/2)
\quad \text{$\P$-a.s.}  
\label{Ztotlln}\ee
There exist finite constants  
  $C(\mu)$ and $N_0(\mu)$  that depend on $\mu$ alone such that, for  
   $b\ge 1$,  
\be 
\sup_{N\ge N_0(\mu)} \P\bigl[  \; \abs{ \log Z_N^{\text{\rm p2l}}  - N\freee_{1/2,1/2}(\mu) }\ge bN^{1/3} \,\bigr] 
\le C(\mu)b^{-3/2} \label{fa-4}\ee
and 
\be \sup_{N\ge N_0(\mu)} P_N^{\text{\rm p2l}}\bigl\{ \,\bigl\lvert 
{x_{N-2}-(\tfrac{N}2, \tfrac{N}2)} \bigr\rvert \ge bN^{2/3}\,\bigr\}
  \le \; C(\mu)b^{-3}.  
\label{fa-5}\ee
\label{tot-f-thm}\end{theorem}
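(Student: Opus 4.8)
The plan is to reduce everything to the fixed-endpoint estimates already established---the free-energy concentration bound \eqref{Zmom2.9} and the path-fluctuation bound of Theorem \ref{path-nobd-thm}---by decomposing $Z_N^{\text{\rm tot}}$ according to the vertex where the path meets the antidiagonal $\{i+j=N\}$:
\[ Z_N^{\text{\rm tot}}=\sum_{k=1}^{N-1}Z_{(1,1),(k,N-k)}. \]
Since $\freee_{s,t}(\mu)$ is symmetric, $\freee_{s,t}=\freee_{t,s}$, and strictly concave (strictness from the strict monotonicity of $\trigamf$, which makes $\theta_{s,t}$ in \eqref{trig-st} strictly increasing in $s/t$), the function $s\mapsto\freee_{s,1-s}(\mu)$ attains its maximum on $(0,1)$ only at $s=\tfrac12$, where it equals $\freee_{1/2,1/2}(\mu)=-\digamf(\mu/2)$ (as $\theta_{1/2,1/2}=\mu/2$); near $s=\tfrac12$ one has $\freee_{s,1-s}(\mu)\le\freee_{1/2,1/2}(\mu)-c_0(s-\tfrac12)^2$ with $c_0>0$, and the value stays bounded away from the maximum for $s$ away from $\tfrac12$. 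This identifies the constant in \eqref{Ztotlln}. The lower bound is immediate: $Z_N^{\text{\rm tot}}\ge Z_{(1,1),(\fl{N/2},\ce{N/2})}$, so \eqref{Zmom2.9} with $s=t=\tfrac12$ gives the lower-tail half of \eqref{fa-4}; and once the upper-tail half is in place, Borel--Cantelli yields \eqref{Ztotlln}, exactly as in the remark following Theorem \ref{var-bd-thm}.

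The upper tail in \eqref{fa-4} is the crux. A union bound over the summands is hopeless, because the endpoint fluctuates on scale $N^{2/3}$, so $\Theta(N^{2/3})$ summands contribute and summing \eqref{Zmom2.9} over them costs a factor $N^{2/3}$ against the $b^{-3/2}$ rate. Instead I would use the \emph{crossing identity}: each up-right path from $(1,1)$ to $(N,N)$ meets $\{i+j=N\}$ in exactly one vertex, so
\[ Z_{(1,1),(N,N)}=\sum_{k=1}^{N-1}Z_{(1,1),(k,N-k)}\,Z_{(k,N-k),(N,N)}=\langle A,B\rangle,\qquad A_k:=Z_{(1,1),(k,N-k)},\quad B_k:=Z_{(k,N-k),(N,N)}. \]
Here $A$ uses only the weights at levels $3,\dots,N$ and $B$ only those at levels $N+1,\dots,2N$, so $A$ and $B$ are independent; by the reflection $(i,j)\mapsto(N-i,N-j)$ of the i.i.d.\ environment, $\wh Z_N^{\text{\rm tot}}:=\norm{B}_1$ is again a free-endpoint partition function from a corner to the opposite antidiagonal, and $\wh Z_N^{\text{\rm tot}}\ge Z_{(N/2,N/2),(N,N)}$ gives via \eqref{Zmom2.9} the lower tail $\P[\log\wh Z_N^{\text{\rm tot}}<N\freee_{1/2,1/2}(\mu)-bN^{1/3}]\le Cb^{-3/2}$. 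Trivially $\langle A,B\rangle\le\norm{A}_1\norm{B}_1=Z_N^{\text{\rm tot}}\wh Z_N^{\text{\rm tot}}$; granting a reverse bound $\langle A,B\rangle\ge N^{-C}\norm{A}_1\norm{B}_1$ off an event whose probability tends to $0$ (discussed below), we get that $\log Z_N^{\text{\rm tot}}+\log\wh Z_N^{\text{\rm tot}}$ lies within $O(\log N)$ of $\log Z_{(1,1),(N,N)}$, which by \eqref{Zmom2.9} at scale $2N$ in the diagonal direction equals $2N\freee_{1/2,1/2}(\mu)+O(N^{1/3})$ with $b^{-3/2}$ tails. Now condition on $B$: on $\{\log Z_N^{\text{\rm tot}}>N\freee_{1/2,1/2}(\mu)+bN^{1/3}\}$ intersected with the probability-$\ge\tfrac12$ event $\{\log\wh Z_N^{\text{\rm tot}}\ge N\freee_{1/2,1/2}(\mu)-\delta N^{1/3}\}$, the sum forces $\log Z_{(1,1),(N,N)}\ge 2N\freee_{1/2,1/2}(\mu)+\tfrac12 bN^{1/3}$ for $N\ge N_0(b)$; by independence this intersection has probability at least half that of the first event, while the previous sentence bounds it by $Cb^{-3/2}$. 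Hence the upper-tail half of \eqref{fa-4}, and thereby \eqref{Ztotlln}.

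For the path bound \eqref{fa-5} I would run the ratio argument used for \eqref{path-nobd-1}, with the free endpoint in place of the fixed one: writing the annealed probability as an expectation and bounding the denominator below by the diagonal term,
\[ P_N^{\text{\rm tot}}\bigl\{\,\abs{x_{N-2}-(\tfrac N2,\tfrac N2)}\ge bN^{2/3}\,\bigr\} =\E\Bigl[\bigl(Z_N^{\text{\rm tot}}\bigr)^{-1}\sum_{\abs{k-N/2}\ge bN^{2/3}/\sqrt2}Z_{(1,1),(k,N-k)}\Bigr] \le\sum_{\abs{k-N/2}\ge bN^{2/3}/\sqrt2}\E\Bigl[\frac{Z_{(1,1),(k,N-k)}}{Z_{(1,1),(\fl{N/2},\ce{N/2})}}\Bigr], \]
and estimating each term by splitting over the event where both partition functions are within $O(N^{1/3})$ of their means---on which the ratio is $\le\exp\{-c_0(k-N/2)^2/N+O(N^{1/3})\}$ by the quadratic decay of $\freee_{s,1-s}(\mu)$---and its complement, controlled by \eqref{Zmom2.9}; dyadic summation over the shells $\abs{k-N/2}\asymp 2^jbN^{2/3}$ then produces the $b^{-3}$ rate just as in Theorem \ref{path-nobd-thm}. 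The same decomposition carried out at the quenched level gives \eqref{fa-3}.

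The one genuinely new ingredient---and the main obstacle I anticipate---is the reverse inequality $\langle A,B\rangle\ge N^{-C}\norm{A}_1\norm{B}_1$. It says the independent ``exit-point'' vectors $A$ and $B$ are not anti-aligned: each carries all but a vanishing fraction of its $\ell^1$-mass on a common window $\abs{k-N/2}\le CN^{2/3}$---a weak form of \eqref{fa-5}, obtainable from the quadratic gap of $\freee_{s,1-s}$ together with \eqref{Zmom2.9} applied to the ratios as above---and on that window $A$ and $B$ overlap up to a polynomial factor. The second point requires a mild regularity bound for the centered increments $\log Z_{(1,1),(k,N-k)}-\log Z_{(1,1),(\fl{N/2},\ce{N/2})}$ along the antidiagonal on scale $N^{2/3}$, which should be extractable from Lemma \ref{auxlm7} and the estimates behind Theorem \ref{path-nobd-thm}; carrying this out with enough uniformity in $b$ to deliver the clean $b^{-3/2}$ rate in \eqref{fa-4} will be where the real work lies.
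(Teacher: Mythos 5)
Your reduction of the upper tail of \eqref{fa-4} to the fixed--endpoint bound \eqref{Zmom2.9} via the crossing identity hinges entirely on the reverse inequality $\langle A,B\rangle\ge N^{-C}\norm{A}_1\norm{B}_1$ off an event of vanishing probability, and this step is not a technical loose end: it is where the argument fails. Since $\log\norm{A}_1$ and $\log\norm{B}_1$ agree with $\max_k\log A_k$ and $\max_k\log B_k$ up to $\log N$, your inequality amounts to $\max_k(\log A_k+\log B_k)\ge\max_k\log A_k+\max_k\log B_k-C\log N$. But $(\log A_k)$ and $(\log B_k)$ are \emph{independent} profiles which, on the window $\abs{k-N/2}\le CN^{2/3}$ carrying their mass, each fluctuate on scale $N^{1/3}$ (this is exactly the scale established by Theorem \ref{var-bd-thm} and \eqref{Zmom2.9}); their maximizers sit at different locations in the window, and the deficit $\max\log A+\max\log B-\max(\log A+\log B)$ is of order $N^{1/3}$, not $O(\log N)$, with probability bounded away from $0$ (indeed tending to $1$ in the expected scaling picture). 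Thus the factor lost in passing from $Z_N^{\text{\rm tot}}\,\wh Z_N^{\text{\rm tot}}$ to $Z_{(1,1),(N,N)}$ is $e^{\Theta(N^{1/3})}$ --- precisely the size of the deviation you are trying to rule out. Even a weakened version (loss at most $\tfrac b2N^{1/3}$ with failure probability $\le Cb^{-3/2}$ uniformly in $N$) would require a modulus--of--continuity bound for $k\mapsto\log Z_{(k,N-k),(N,N)}$ at the $N^{1/3}$ scale uniformly over the $N^{2/3}$ window, which neither Lemma \ref{auxlm7} nor Theorem \ref{path-nobd-thm} provides and which is at least as hard as the theorem itself. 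The paper's route is different and avoids this: it bounds $Z_N^{\text{\rm tot}}\le N\,Z_{(1,1),(m,n)}\max_k\prod_{j\le k}U_{m+j,n-j}/V_{m+j-1,n-j+1}$ via \eqref{Zcomp2} after attaching stationary boundaries with the tilted parameter $\theta=\mu/2+rN^{-1/3}$, so that by the Burke property the increments are i.i.d.\ with drift $-c(\mu)rN^{-1/3}$ (handled by the random walk Lemma \ref{rwlm}), while the price of the boundary, $Q_{m,n}(\xexit>0)$, is bounded below by a separate change-of-parameter argument (Lemma \ref{lmfa6}).

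Two further steps do not go through as written. First, Borel--Cantelli from \eqref{fa-4} does not yield \eqref{Ztotlln}: taking $b=\e N^{2/3}$ gives a bound $C\e^{-3/2}N^{-1}$, which is not summable (unlike the variance-based $N^{-4/3}$ bound behind \eqref{Zllnbd}); the paper instead proves the LLN directly by covering the antidiagonal with finitely many directions, using \eqref{Zlln} and the continuity of $\freee_{s,t}(\mu)$. Second, in your argument for \eqref{fa-5} you bound the annealed probability by $\sum_k\E\bigl[Z_{(1,1),(k,N-k)}/Z_{(1,1),(\fl{N/2},\ce{N/2})}\bigr]$ and then split on an event of probability $1-O(b^{-3/2})$; on the complementary event the ratio is unbounded and only polynomial tails are available, so its expectation cannot be controlled this way. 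One has to stay at the level of quenched probabilities, which are bounded by $1$, and estimate $\P[\,Q_N^{\text{\rm tot}}(A_N)\ge e^{-c_0s^2N^{1/3}}\,]$ as in \eqref{fa-3}; there the same tilted-boundary comparison and Lemmas \ref{lmfa6} and \ref{rwlm}(b) do the work.
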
 

The last case to address is the point-to-line polymer with boundaries.  
This case   is perhaps  of less interest than the others for the free energy scales diffusively, 
but we record it for the sake of completeness.  
Fix $0<\theta<\mu$ and  let assumption \eqref{distr4} on the weight distributions be in
force. The fixed-endpoint 
 partition function $Z_{m,n}=Z_{(0,0),(m,n)}$ is the one defined in \eqref{Z0.0}. 
Define  the   partition function of all paths from $(0,0)$ to the line 
$x+y=N$ by 
\[   Z_N^{\text{\rm p2l}}(\theta,\mu)= \sum_{\ell=0}^N Z_{\ell,N-\ell}.   \]
Define a  limiting free energy 
\[  g(\theta,\mu)= \max_{0\le s\le 1}\bigl(-s\digamf(\theta)-(1-s)\digamf(\mu-\theta)\bigr)
=\begin{cases}  -\digamf(\theta) &\theta\le \mu/2\\   -\digamf(\mu-\theta) &\theta\ge \mu/2.
\end{cases}  
\]
Set also
\[  \sigma^2(\theta,\mu)=\begin{cases}  \trigamf(\theta) &\theta\le \mu/2\\  
 \trigamf(\mu-\theta) &\theta\ge \mu/2, 
\end{cases}  
\]
and define random variables $\zeta(\theta,\mu)$ as follows:  
  for  $\theta\ne\mu/2$,  $\zeta(\theta,\mu)$ has  centered normal distribution with 
variance $\sigma^2(\theta,\mu)$, while 
\be \zeta(\mu/2,\mu)=\sqrt{2\trigamf(\mu/2)}\bigl(M_{1/2}\vee M_{1/2}'\bigr)
\label{defzeta2}\ee where 
$M_t=\sup_{0\le s\le t}B(s)$ is the running maximum of a standard Brownian motion 
and $M_t'$ is an independent copy of it.  

\begin{theorem}  Let $0<\theta<\mu$ and  assume \eqref{distr4}.  We have the law of large numbers 
\be     \lim_{N\to\infty} N^{-1}\log    Z_N^{\text{\rm p2l}}(\theta,\mu)= g(\theta,\mu)
\quad \text{$\P$-a.s.}    \label{llnZtotbd}\ee
and  the distributional limit 
\be
N^{-1/2}\bigl(  \log Z_N^{\text{\rm p2l}}(\theta,\mu) - Ng(\mu/2,\mu)  \bigr) 
\overset{d}\longrightarrow \zeta(\theta,\mu).  
\label{Ztotbdclt}\ee
\label{thm-Ztotbd}\end{theorem}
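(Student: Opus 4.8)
The plan is to use Burke's theorem to collapse the family $\{Z_{\ell,N-\ell}:0\le\ell\le N\}$ onto a one‑dimensional random walk and then read everything off from Donsker's invariance principle and the ordinary central limit theorem. First a trivial reduction: from $\max_{0\le\ell\le N}Z_{\ell,N-\ell}\le Z_N^{\text{\rm tot}}(\theta,\mu)\le(N+1)\max_{0\le\ell\le N}Z_{\ell,N-\ell}$ we may replace $\log Z_N^{\text{\rm tot}}(\theta,\mu)$ by $M_N:=\max_{0\le\ell\le N}\log Z_{\ell,N-\ell}$ at the cost of a deterministic error in $[0,\log(N+1)]$, negligible on both the $N$‑scale and the $\sqrt N$‑scale. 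Next apply Theorem~\ref{burkethm} to the down‑right lattice path from $(0,N)$ to $(N,0)$ that alternates one east step with one south step, so it visits every vertex $(\ell,N-\ell)$, $0\le\ell\le N$. The horizontal‑edge ratios $U_{i,N-i+1}=Z_{i,N-i+1}/Z_{i-1,N-i+1}$ and vertical‑edge ratios $V_{i,N-i+1}=Z_{i,N-i+1}/Z_{i,N-i}$, $1\le i\le N$, are then mutually independent with $U_{i,N-i+1}^{-1}\sim\text{Gamma}(\theta,1)$ and $V_{i,N-i+1}^{-1}\sim\text{Gamma}(\mu-\theta,1)$. Telescoping gives, for each fixed $N$, $\log Z_{\ell,N-\ell}=\log Z_{0,N}+\sum_{i=1}^{\ell}\xi_i$ with $\xi_i:=\log U_{i,N-i+1}-\log V_{i,N-i+1}$; by \eqref{ditri1} the $\xi_i$ are i.i.d., $\E\xi_i=\digamf(\mu-\theta)-\digamf(\theta)$, $\Vvv(\xi_i)=\trigamf(\theta)+\trigamf(\mu-\theta)$, and they have exponential moments near the origin, being a difference of logarithms of independent inverse‑gamma variables. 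Also $Z_{0,N}=\prod_{j=1}^{N}V_{0,j}$ and $Z_{N,0}=\prod_{i=1}^{N}U_{i,0}$ are products of $N$ i.i.d.\ boundary weights.

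Suppose first $\theta<\mu/2$ (the case $\theta>\mu/2$ is the mirror image under transposition, which carries $Z_N^{\text{\rm tot}}(\theta,\mu)$ to $Z_N^{\text{\rm tot}}(\mu-\theta,\mu)$). Since $\digamf$ is increasing, $\E\xi_i>0$, so $\log Z_{\ell,N-\ell}=\log Z_{N,0}-\sum_{i=\ell+1}^{N}\xi_i$ yields $M_N=\log Z_{N,0}+R_N$ with $R_N:=\max_{0\le k\le N}\bigl(-\sum_{i=N-k+1}^{N}\xi_i\bigr)\ge0$. The reversed partial sums form a random walk with positive drift, so a union bound over $k$ via Cram\'er's inequality gives $\P(R_N\ge x)\le Ce^{-cx}$ uniformly in $N$; hence $R_N=O(\log N)$ $\P$‑a.s.\ and $N^{-1/2}R_N\to0$ in probability. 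As $\log Z_{N,0}=\sum_{i=1}^{N}\log U_{i,0}$ is a sum of $N$ i.i.d.\ terms with mean $-\digamf(\theta)=g(\theta,\mu)$ and variance $\trigamf(\theta)$, the SLLN and CLT give $N^{-1}\log Z_N^{\text{\rm tot}}(\theta,\mu)\to g(\theta,\mu)$ $\P$‑a.s.\ (lower bound from $Z_N^{\text{\rm tot}}\ge Z_{N,0}$, upper bound from the sandwich and the tail of $R_N$) and $N^{-1/2}\bigl(\log Z_N^{\text{\rm tot}}(\theta,\mu)-Ng(\theta,\mu)\bigr)\overset{d}\longrightarrow\cN(0,\trigamf(\theta))=\zeta(\theta,\mu)$; the symmetric case replaces $\theta$ by $\mu-\theta$. (For $\theta\ne\mu/2$ the centering in \eqref{Ztotbdclt} should be read as $Ng(\theta,\mu)$; it agrees with $Ng(\mu/2,\mu)$ precisely when $\theta=\mu/2$.)

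In the balanced case $\theta=\mu/2$ one has $\E\xi_i=0$ and $\Vvv(\xi_i)=2\trigamf(\mu/2)$. Put $L=\fl{N/2}$ and split the maximum at $\ell=L$: $M_N=\log Z_{L,N-L}+\max(A_N,B_N)$, where $A_N=\max_{0\le j\le L}\bigl(-\sum_{i=L-j+1}^{L}\xi_i\bigr)$ depends only on $\xi_1,\dots,\xi_L$ and $B_N=\max_{0\le j\le N-L}\sum_{i=L+1}^{L+j}\xi_i$ depends only on $\xi_{L+1},\dots,\xi_N$. By the mutual independence of the $\xi_i$, $A_N$ and $B_N$ are independent, and each is the running maximum of a centered random walk of $\simeq N/2$ steps with step variance $2\trigamf(\mu/2)$; by Donsker's theorem and the continuous mapping theorem, $N^{-1/2}A_N$ and $N^{-1/2}B_N$ converge jointly to independent copies of $\sqrt{2\trigamf(\mu/2)}\,M_{1/2}$, whence $N^{-1/2}\max(A_N,B_N)\overset{d}\longrightarrow\sqrt{2\trigamf(\mu/2)}\,(M_{1/2}\vee M_{1/2}')$. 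Finally $(L,N-L)$ points in the characteristic direction for $\theta=\mu/2$, namely the diagonal (since $\trigamf(\theta)=\trigamf(\mu-\theta)$), so Theorem~\ref{var-bd-thm} gives $\Vvv(\log Z_{L,N-L})=O(N^{2/3})$, while $\E\log Z_{L,N-L}=-N\digamf(\mu/2)=Ng(\mu/2,\mu)$ by \eqref{ElogZ}; thus $N^{-1/2}\bigl(\log Z_{L,N-L}-Ng(\mu/2,\mu)\bigr)\to0$ in probability. Slutsky's theorem combines the three pieces into \eqref{Ztotbdclt}, and \eqref{llnZtotbd} follows as before, the lower bound coming from $Z_N^{\text{\rm tot}}\ge Z_{L,N-L}$ together with the characteristic‑direction law of large numbers \eqref{Zllnbd}.

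The load‑bearing step is the use of Theorem~\ref{burkethm} in its full strength --- \emph{mutual} independence of the edge ratios along the anti‑diagonal down‑right path, not merely their marginal laws. This is what turns $(\xi_i)$ into an i.i.d.\ sequence and, in the balanced case, makes the left‑ and right‑half maxima $A_N,B_N$ independent, which is exactly what produces the $M_{1/2}\vee M_{1/2}'$ of \eqref{defzeta2} rather than a single running maximum over $[0,1]$. Everything else is soft: the passage from $\log\sum$ to $\max$ is a deterministic sandwich, the $\theta\ne\mu/2$ statement is a one‑dimensional CLT plus the elementary tightness of the backward maximum of a positive‑drift walk, and the only genuinely model‑specific input is Theorem~\ref{var-bd-thm}, invoked solely to certify that the $N^{1/3}$‑scale fluctuations of the central term $\log Z_{L,N-L}$ vanish at scale $\sqrt N$.
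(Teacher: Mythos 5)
Your proposal is correct and takes essentially the same route as the paper's own proof: both decompose $\log Z_N^{\text{\rm tot}}$ along the anti-diagonal into i.i.d.\ increments of Burke edge ratios, treat $\theta\ne\mu/2$ by a one-dimensional SLLN/CLT for the dominant boundary sum plus a negligible maximum of a drifted walk, and treat $\theta=\mu/2$ by splitting at the characteristic point $(\fl{N/2},N-\fl{N/2})$, invoking Theorem \ref{var-bd-thm} to kill the $O(N^{1/3})$ fluctuations of the central term and Donsker for the two independent one-sided running maxima giving $M_{1/2}\vee M_{1/2}'$. Your reading of the centering as $Ng(\theta,\mu)$ for $\theta\ne\mu/2$ matches what the paper's argument actually proves.
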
 
When  $\theta\ne\mu/2$ the axis with the larger $-\digamf$ value 
completely dominates, while if   $\theta=\mu/2$ all
directions have the same limiting free energy.  This accounts for the results
in the theorem. 

\medskip

{\sl Organization of the paper.} 
Before we   begin  the proofs of the main theorems, 
 Section \ref{sec:basic} collects   basic
properties of the model, including the Burke-type property.  
 The upper and lower bounds of 
Theorem \ref{var-bd-thm} are proved in Sections 
\ref{sec:ub-bd} and \ref{sec:lb-bd}.  
Corollary \ref{clt-cor} is proved at the end of Section \ref{sec:ub-bd}.
The bounds for the fixed-endpoint path with boundaries 
are proved in Section \ref{sec:path-bd},  and   
the results for the fixed-endpoint polymer model without boundaries   
in Section \ref{sec:nobd}.  The results for the polymer with free endpoint 
are proved   in Section \ref{sec:tot}. 
 
\section{Basic properties of the polymer model with boundaries} 
\label{sec:basic}
This section sets the stage for the proofs  with some preliminaries.  
The main results of this section
 are the Burke property in Theorem \ref{burkethm} and  
identities that tie together the variance of the free energy and the exit
points from the axes in Theorem \ref{varxi-thm}. 
 
Occasionally we will use notation for the partition function that includes
the weight at the starting point, which we write as
\be
\Zalt_{(i,j),(k,\ell)}=\sum_{x_\centerdot \in\Pi_{(i,j),(k,\ell)}} 
\prod_{r=0}^{k-i+\ell-j} Y_{x_r} = Y_{i,j} Z_{(i,j),(k,\ell)}. 
\label{Z3.4}\ee  
  
Let the initial weights $\{U_{i,0}, V_{0,j}, Y_{i,j}:  i,j\in\bN\}$ be
given.   
  Starting from the lower left corner of $\bN^2$,
define inductively for $(i,j)\in\bN^2$
\be\begin{aligned} 
&U_{i,j}=Y_{i,j}\Bigl(1+\frac{U_{i,j-1}}{V_{i-1,j}}\Bigr), \quad 
V_{i,j}=Y_{i,j}\Bigl(1+\frac{V_{i-1,j}}{U_{i,j-1}}\Bigr) \\
&\qquad\qquad  \quad \text{and}\quad X_{i-1,j-1}=\Bigl(\frac1{U_{i,j-1}}+\frac1{V_{i-1,j}}\Bigr)^{-1}.  
\end{aligned}\label{UVY1}\ee
 The  partition function satisfies 
\be 
Z_{m,n}=Y_{m,n}\bigl( Z_{m-1,n} + Z_{m,n-1} \bigr)  \quad 
\text{for} \  (m,n)\in\bN^2
\label{Z1}\ee
and one  checks inductively that 
\be 
U_{m,n}=\frac{Z_{m,n}}{Z_{m-1,n}}
\quad\text{and}\quad 
V_{m,n}=\frac{Z_{m,n}}{Z_{m,n-1}}  \label{Z2}\ee
for $(m,n)\in \bZ_+^2\smallsetminus\{(0,0)\}$.
%As indicated before $Z_{0,0}=1$. 
Equations \eqref{Z1} and \eqref{Z2} are also valid for $\Zalt_{m,n}$
because
the weight at the origin  cancels from the equations.  

It is also natural to associate the $U$- and $V$-variables to undirected edges of the lattice 
$\bZ_+^2$.  If $f=\{x-e_1,x\}$ is a horizontal edge then $T_f=U_x$, while if 
$f=\{x-e_2, x\}$ then $T_f=V_x$.  

The following  monotonicity property can be proved inductively:

\begin{lemma}  Consider two sets of positive  initial values 
$\{U_{i,0}, V_{0,j}, Y_{i,j}:  i,j\in\bN\}$ and $\{\wt U_{i,0}, \wt V_{0,j}, \wt Y_{i,j}:  i,j\in\bN\}$
that satisfy  $U_{i,0}\ge\wt U_{i,0}$, $V_{0,j} \le \wt V_{0,j}$, and  $Y_{i,j} =\wt Y_{i,j}$. 
From these define inductively the values $\{U_{i,j},V_{i,j}:(i,j)\in\bN^2\}$  and 
$\{\wt U_{i,j},\wt V_{i,j}:(i,j)\in\bN^2\}$ by equation \eqref{UVY1}.   Then 
$U_{i,j}\ge\wt U_{i,j}$ and $V_{i,j} \le \wt V_{i,j}$ for all   $(i,j)\in\bN^2$.  
\label{monolm}\end{lemma}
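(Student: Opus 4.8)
The plan is to prove both inequalities together by induction on the antidiagonal index $n=i+j$, the point being that the update map defined by \eqref{UVY1}, namely $(u,v)\mapsto\bigl(y(1+u/v),\,y(1+v/u)\bigr)$ with $y>0$ held fixed, is monotone in precisely the right way: its first coordinate increases in $u$ and decreases in $v$, and its second coordinate decreases in $u$ and increases in $v$. So the hypothesized orderings $U_{i,0}\ge\wt U_{i,0}$, $V_{0,j}\le\wt V_{0,j}$ should propagate into the bulk without ever reversing.

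First I would check the base case $n=2$, i.e.\ $(i,j)=(1,1)$. Here $U_{1,1}=Y_{1,1}(1+U_{1,0}/V_{0,1})$ and $V_{1,1}=Y_{1,1}(1+V_{0,1}/U_{1,0})$ are computed directly from the given boundary data, so $Y_{1,1}=\wt Y_{1,1}$, $U_{1,0}\ge\wt U_{1,0}>0$ and $0<V_{0,1}\le\wt V_{0,1}$ give $U_{1,1}\ge\wt U_{1,1}$ and $V_{1,1}\le\wt V_{1,1}$ immediately.

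For the inductive step, I would fix $(i,j)\in\bN^2$ with $i+j=n\ge 3$ and assume the claim on all antidiagonals of index $<n$. Forming $U_{i,j}$ and $V_{i,j}$ via \eqref{UVY1} needs $U_{i,j-1}$ and $V_{i-1,j}$. The only bookkeeping point is that each of these is either a previously constructed bulk value (when $j-1\ge1$, resp.\ $i-1\ge1$), to which the inductive hypothesis applies since its antidiagonal index is $n-1$, or else a given boundary value $U_{i,0}$ (resp.\ $V_{0,j}$), for which the ordering is an assumption of the lemma. Either way $U_{i,j-1}\ge\wt U_{i,j-1}>0$ and $0<V_{i-1,j}\le\wt V_{i-1,j}$, and feeding these into the monotone update map together with $Y_{i,j}=\wt Y_{i,j}>0$ yields $U_{i,j}\ge\wt U_{i,j}$ and $V_{i,j}\le\wt V_{i,j}$, closing the induction. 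Note that positivity of all the weights is preserved along the way, so every division in \eqref{UVY1} is legitimate.

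The argument is completely elementary and I do not anticipate any genuine obstacle; the only thing to be careful about is the split into the ``interior predecessor'' versus ``boundary predecessor'' cases, and tracking that all quantities stay strictly positive so that the monotonicity of the update map can be invoked.
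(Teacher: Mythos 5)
Your argument is correct and is exactly the inductive proof the paper has in mind (the paper only remarks that the lemma "can be proved inductively" without spelling it out): induction along antidiagonals, using that $(u,v)\mapsto\bigl(y(1+u/v),\,y(1+v/u)\bigr)$ is increasing/decreasing in the right variables, with the boundary orderings supplying the base and boundary-predecessor cases. Nothing is missing.
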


%%%%%%%%%    BURKE 
\subsection{Propagation of boundary conditions}
%Form here onwards the weights $\{U_{i,0},V_{0,j}, Y_{i,j}: i,j\in\bN\}$ 
%have the type of distribution described in \eqref{distr4}. 
The next lemma gives a reversibility property that we can regard as an analogue 
of reversibility properties of  M/M/1 queues and their  last-passage 
versions.  (A basic reference for queues is \cite{kell}.
Related work
appears  in  \cite{bala-cato-sepp, cato-groe-05, cato-groe-06, oconn-yor-01}.)   

\begin{lemma} 
Let $U$, $V$ and $Y$ be independent positive random variables.  Define 
\be
U'=Y(1+UV^{-1}), \quad V'=Y(1+VU^{-1}) \quad \text{and}\quad Y'=(U^{-1}+V^{-1})^{-1}.  
\label{UVY}\ee
Then the triple $(U',V',Y')$ has the same distribution as $(U,V,Y)$ iff  there exist positive 
parameters $0<\theta<\mu$ and $r$ such that
 \be  
\text{$U^{-1}$ $\sim$ Gamma$(\theta, r)$,  $V^{-1}$ $\sim$ Gamma$(\mu-\theta, r)$,   and 
$Y^{-1}$ $\sim$ Gamma$(\mu, r)$. } 
\label{distr}\ee
\label{UVY-lm}\end{lemma}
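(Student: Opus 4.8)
## Proof Proposal for Lemma \ref{UVY-lm}

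The plan is to prove the two directions separately, with the substantive content lying in the ``if'' direction; the ``only if'' direction I would defer or handle by a characterization argument.

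\textbf{The ``if'' direction (sufficiency).} Assume \eqref{distr} holds; by the scaling remark after \eqref{distr4} we may take $r=1$. I would work with the reciprocals. Write $a = U^{-1} \sim \text{Gamma}(\theta,1)$, $b = V^{-1} \sim \text{Gamma}(\mu-\theta,1)$, $c = Y^{-1} \sim \text{Gamma}(\mu,1)$, all independent. Then \eqref{UVY} becomes, in terms of reciprocals,
\[
(U')^{-1} = \frac{c^{-1}}{\,1 + ab^{-1}\cdot b a^{-1}\cdot\ldots\,}
\]
— more cleanly: $(U')^{-1} = c\,\bigl(1 + (a/b)\cdot\ldots\bigr)^{-1}$, so I should set $w = a/(a+b) \in (0,1)$ and $z = c^{-1}(a+b)$. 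A direct computation gives $(U')^{-1} = zw$ and $(V')^{-1} = z(1-w)$, while $Y' = (a+b)^{-1}$, i.e. $(Y')^{-1} = a+b$. Now the key classical fact is the \emph{beta--gamma algebra}: if $a \sim \text{Gamma}(\theta,1)$ and $b \sim \text{Gamma}(\mu-\theta,1)$ are independent, then $s := a+b \sim \text{Gamma}(\mu,1)$ and $w := a/(a+b) \sim \text{Beta}(\theta,\mu-\theta)$ are \emph{independent}. Moreover $c \sim \text{Gamma}(\mu,1)$ is independent of $(s,w)$, so $z = s/c$ is a ratio of two independent $\text{Gamma}(\mu,1)$'s, hence $z \sim \text{Beta}'(\mu,\mu)$ (a beta prime / inverted beta), and crucially $z$ is independent of $w$. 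Therefore $(U')^{-1} = zw$ and $(V')^{-1} = z(1-w)$: I need to show this pair has the same joint law as $(a,b)$, and that $(Y')^{-1} = a+b$ has the right marginal jointly. For the marginals: a second application of beta--gamma in the form ``$\text{Beta}'(\mu,\mu) \times \text{Beta}(\theta,\mu-\theta)$ products'' should recover $\text{Gamma}(\theta,1)$ and $\text{Gamma}(\mu-\theta,1)$ — the cleanest route is a change of variables $(z,w) \mapsto (zw, z(1-w)) = (a',b')$ whose Jacobian is $z$, combined with the explicit densities, to verify directly that $(a',b')$ has the independent-gamma density with parameters $(\theta,\mu-\theta)$. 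For the joint law of the full triple $(U',V',Y')$ I would note $(Y')^{-1} = a'+b'$ is already a deterministic function of $(a',b')$ in exactly the same way $(a+b)$ relates to $(a,b)$, so matching the law of $(a',b')$ to that of $(a,b)$ automatically matches the law of $(U',V',Y')$ to that of $(U,V,Y)$.

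\textbf{The ``only if'' direction (necessity).} Suppose $(U',V',Y') \overset{d}{=} (U,V,Y)$ for some independent positive $U,V,Y$. I would first reduce to the reciprocals $a,b,c$ and the transformed variables, and extract functional equations for the Laplace transforms or for the densities. One clean approach: the map sends $(a,b,c)$ to $(a',b',c')$ with $a'+b' = 1/Y' $ and the invariance forces, upon taking conditional laws, a fixed-point equation of the same type that appears in characterizations of the gamma law via the independence of $a+b$ and $a/(a+b)$ (Lukacs-type / beta--gamma characterization theorems). I would cite such a characterization — that independence of sum and ratio among independent positive variables, together with the self-reproducing structure here, pins down gamma distributions up to the common scale $r$ — rather than reprove it. The ordering $0<\theta<\mu$ is forced because $\mu - \theta$ must be a positive gamma shape, and the common $r$ appears because only the scale is left free. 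I expect the authors may simply remark that sufficiency is what they need and sketch necessity; I would do the same, making sufficiency the rigorous core.

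\textbf{Main obstacle.} The real work is the ``if'' direction bookkeeping: correctly setting up $w$ and $z$, checking their independence via the two beta--gamma splittings, and then the Jacobian change-of-variables showing $(zw,z(1-w))$ is distributed as independent $\text{Gamma}(\theta,1)\otimes\text{Gamma}(\mu-\theta,1)$. None of this is deep, but it is easy to make an algebra slip in translating \eqref{UVY} into the $(w,z)$ coordinates, so I would carry that computation carefully. The necessity direction's obstacle is instead locating/citing the right characterization theorem and verifying its hypotheses match; if no off-the-shelf statement fits, one falls back to a direct density or Mellin-transform argument, which is the messier contingency.
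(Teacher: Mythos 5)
Your intended route for the ``if'' direction---the beta--gamma algebra---is exactly the paper's, but the translation of \eqref{UVY} into your coordinates is wrong, and the error is not cosmetic: it breaks the distributional identification you rely on. With $a=U^{-1}$, $b=V^{-1}$, $c=Y^{-1}$ one gets $(U')^{-1}=c\,a/(a+b)=cw$ and $(V')^{-1}=c\,(1-w)$, i.e.\ the Gamma$(\mu)$ factor multiplying the Beta$(\theta,\mu-\theta)$ split is $c$ itself, not your $z=(a+b)/c$. With your formulas $(U')^{-1}=zw=a/c$, which is Beta-prime distributed, not Gamma$(\theta)$, and the pair $(a/c,\,b/c)$ is neither independent nor gamma; likewise your claimed fact that a product of an independent Beta$'(\mu,\mu)$ and Beta$(\theta,\mu-\theta)$ reproduces a Gamma$(\theta)$ is false. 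The correct statement, and the one the paper uses, is that $(w,1-w)$ is Beta-distributed and independent of $s=a+b\sim$ Gamma$(\mu)$, so $(cw,c(1-w))$ are independent Gamma$(\theta)$, Gamma$(\mu-\theta)$ variables. A second genuine gap is your handling of the full triple: $(Y')^{-1}=a+b$ is \emph{not} $a'+b'$ (with the correct coordinates $a'+b'=c$), and in the target law $Y^{-1}$ must be \emph{independent} of $(U^{-1},V^{-1})$, not a function of them; so matching the law of the pair does not ``automatically'' match the triple. One must argue, as the paper does, that $(cw,c(1-w))$ is a function of $(c,w)$ and hence independent of $s=(Y')^{-1}$, which again comes from the sum/ratio independence.

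For the ``only if'' direction you defer to an unspecified characterization theorem; the paper actually proves it, and the key observation you are missing is elementary: the map satisfies $U'/V'{}^{-1}$-wise $A'/B'=A/B$ identically and $Z'=A+B$, so distributional invariance forces the ratio $A/B$ to be independent of the sum $A+B$, which by Lukacs' 1955 theorem pins $A,B$ down as independent gammas with a common scale $r$, and then $Z\overset{d}=Z'=A+B$ fixes the law of $Z$ as Gamma$(\mu,r)$. So while your plan names the right circle of ideas, as written the sufficiency computation would fail at the distributional identification, the triple-law step is logically incorrect, and half of the iff is left unproved.
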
 
\begin{proof}  Assuming  \eqref{distr}, define independent gamma variables 
 $A=U^{-1}$, $B=V^{-1}$ and $Z=Y^{-1}$. Then set 
\[ A'=\frac{ZA}{A+B}\,, \quad  B'=\frac{ZB}{A+B}\,, \quad  \text{and}\quad  Z'=A+B. \]
 We need to show that
$(A',B',Z')\overset{d}=(A,B,Z)$.  Direct calculation with Laplace transforms 
is convenient.  Alternatively,  one can reason with basic properties
 of gamma
distributions  as follows.  The pair $(A/(A+B), B/(A+B))$ has distributions
 Beta$(\theta,\mu-\theta)$ and Beta$(\mu-\theta, \theta)$,  
  and is independent of the Gamma$(\mu,r)$-distributed sum
  $A+B=Z'$.  Hence $A'$ and $B'$ are a pair
of independent  variables with distributions Gamma$(\theta, r)$ 
and Gamma$(\mu-\theta,r)$, and by construction also independent of $Z'$.  

Assuming $(A',B',Z')\overset{d}=(A,B,Z)$,  
  $A'/B'=A/B$ is independent of $Z'=A+B$.  By 
Theorem 1 of \cite{luka-55}  $A$ and $B$ are independent gamma variables with the
same scale parameter $r$.  Then  $Z\overset{d}=Z'=A+B$ determines  the 
distribution of $Z$. 
\end{proof} 

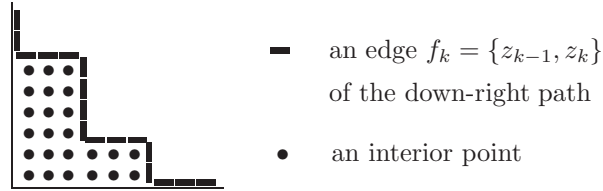
\begin{figure}[ht2]
\begin{center}
\begin{picture}(200,90)(60,20)
\put(40,20){\line(1,0){80}}
\put(40,20){\line(0,1){70}}
{\linethickness{2pt} %\color{blue}   
\multiput(42,79)(0,8){2}{\line(0,-1){7}}   \multiput(42,70)(8,0){3}{\line(1,0){7}}
\multiput(67,45)(0,8){4}{\line(0,-1){7}}   \multiput(68,38)(8,0){3}{\line(1,0){7}}
\multiput(92,29)(0,8){2}{\line(0,-1){7}}   \multiput(94,22)(8,0){3}{\line(1,0){7}}
}
{%\color{blue}  
\multiput(44,22)(0,8){6}{\small $\bullet$}  \multiput(51.5,22)(0,8){6}{\small $\bullet$}
\multiput(59,22)(0,8){6}{\small $\bullet$}
\multiput(68,22)(0,8){2}{\small $\bullet$}  \multiput(76,22)(0,8){2}{\small $\bullet$}
 \multiput(84,22)(0,8){2}{\small $\bullet$}}
%\put(41,80){$\bullet$}
 {\linethickness{2pt}%\color{blue} 
 \multiput(138,72)(8,0){1}{\line(1,0){7}} }  
 \put(160,69){\small an edge $f_k=\{ z_{k-1}, z_k\}$ }
\put(160,53){\small  of the down-right path }
{ %\color{blue} 
\put(140,30){\small $\bullet$} }
 \put(161,30){\small an interior point  }
\end{picture}
\end{center}  
\caption{ \small Illustration of a down-right path $(z_k)$ and its
set $\cI$ of  interior points. Interior point $(i,j)$ is represented by a dot
centered at $(i+1/2,\,j+1/2)$.}  \label{fig4}
\end{figure}

\bigskip

From this lemma we get a Burke-type theorem.  Let $z_\centerdot=(z_k)_{k\in\bZ}$ be a nearest-neighbor
down-right path in $\bZ_+^2$, that is,   $z_k\in\bZ_+^2$ and 
$z_k-z_{k-1}=e_1$ or $-e_2$.  Denote the undirected edges of the path by $f_k=\{z_{k-1},z_k\}$,
and let 
\[   T_{f_k}= \begin{cases}  U_{z_k}&\text{if $f_k$ is a horizontal edge}\\
V_{z_{k-1}}&\text{if $f_k$ is a vertical edge.}
\end{cases}  \]
Let the (lower left) {\sl interior} of the path be the vertex set 
$\cI=\{ (i,j)\in\bZ_+^2:  \exists m\in\bN:  \; (i+m,j+m)\in\{z_k\} \}$
(see Figure \ref{fig4}).  
$\cI$ is finite if the path $z_\centerdot$ coincides with the axes
 for all but finitely many edges.  Recall the definition of $X_{i,j}$
from \eqref{UVY1}. 

\begin{theorem}  Assume \eqref{distr4}.
For any down-right path $(z_k)_{k\in\bZ}$  in $\bZ_+^2$,  the  variables 
$\{ T_{f_k},  X_z:  k\in\bZ,\, z\in\cI\}$ 
  are mutually independent with marginal distributions 
\be  
\text{$U^{-1}$ $\sim$ Gamma$(\theta, 1)$,  $V^{-1}$ $\sim$ Gamma$(\mu-\theta, 1)$,   and 
$X^{-1}$ $\sim$ Gamma$(\mu, 1)$.} 
\label{distr2}\ee
\label{burkethm}\end{theorem}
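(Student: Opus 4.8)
The plan is to prove the statement by induction on the size of the interior $\cI$, using Lemma \ref{UVY-lm} as the single-site building block. The base case is a down-right path that runs entirely along the two axes, i.e.\ $\cI=\varnothing$: then the edge variables $\{T_{f_k}\}$ are just the boundary weights $\{U_{i,0},V_{0,j}\}$, which by assumption \eqref{distr4} are independent with the stated Gamma marginals, so there is nothing to prove. For the inductive step I would take a down-right path $z_\centerdot$ with nonempty interior and pick a vertex $z_m$ that is a ``corner'' of the path pointing into $\cI$ — precisely, an index $m$ where the path takes a down-step immediately followed by a right-step, so that $z_{m-1}=z_m+e_2$, $z_{m+1}=z_m+e_1$, and $z_m$ has a neighbor $z_m+e_1+e_2$ that is reached by the path (i.e.\ lies on $z_\centerdot$ shifted up-right); such a corner always exists when $\cI\neq\varnothing$. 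Let $w=z_m$. The two edges $f_m=\{z_{m-1},z_m\}$ and $f_{m+1}=\{z_m,z_{m+1}\}$ carry the variables $V_{w+e_2}$ and $U_{w+e_1}$ in the notation of \eqref{UVY1} (with $w=(i-1,j-1)$, so these are $V_{i-1,j}$ and $U_{i,j-1}$), and together with $Y_{i,j}$ they determine, via \eqref{UVY1}, the triple $U_{i,j}=U_{w+e_1+e_2}$, $V_{i,j}=V_{w+e_1+e_2}$, $X_{i-1,j-1}=X_w$.

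The key step is then: by Lemma \ref{UVY-lm} applied to the triple $(U_{i,j-1},V_{i-1,j},Y_{i,j})$, the output triple $(U_{i,j},V_{i,j},X_{i-1,j-1})$ has the \emph{same} joint law \eqref{distr} (with $r=1$) as the input triple $(U,V,Y)$. Now form the new down-right path $z_\centerdot'$ obtained from $z_\centerdot$ by replacing the two edges $f_m,f_{m+1}$ (the ``outer corner'' at $w$) by the two edges of the ``inner corner'' at $w+e_1+e_2$, i.e.\ $\{w+e_1,\,w+e_1+e_2\}$ and $\{w+e_1+e_2,\,w+e_2\}$; this moves exactly the one vertex $w$ out of the interior, so $\cI'=\cI\setminus\{w\}$. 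The edge variables of $z_\centerdot'$ that differ from those of $z_\centerdot$ are precisely $T$ on the two new edges, namely $U_{i,j}$ and $V_{i,j}$, and the new interior variable $X_w$; all other $T_{f_k}$ and $X_z$ are literally unchanged. Since $Y_{i,j}$ is independent of everything else in the system, and $(U_{i,j-1},V_{i-1,j})$ sit on the old path together with the rest of the old edge/interior variables, the transformation $(U_{i,j-1},V_{i-1,j},Y_{i,j})\mapsto(U_{i,j},V_{i,j},X_w)$ is a bijective map acting only on those three coordinates while leaving the remaining coordinates fixed; combining the distributional identity from Lemma \ref{UVY-lm} with the independence of $Y_{i,j}$ from the rest, the full collection $\{T_{f_k'},X_z:k\in\bZ,\,z\in\cI'\}$ has exactly the same joint distribution as $\{T_{f_k},X_z:k\in\bZ,\,z\in\cI\}$. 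By the inductive hypothesis applied to $z_\centerdot'$ (which has strictly smaller interior), that collection is mutually independent with the marginals \eqref{distr2}, hence so is the original collection for $z_\centerdot$.

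The main obstacle I anticipate is bookkeeping rather than conceptual: one must verify carefully that replacing an outer corner by an inner corner really does modify \emph{only} the three coordinates $\{V_{i-1,j},U_{i,j-1},Y_{i,j}\}\leftrightarrow\{U_{i,j},V_{i,j},X_w\}$ and touches no other $T_{f_k}$ or $X_z$ — in particular that the variables attached to the rest of the path are well-defined independently of which of the two corner configurations one uses, which is exactly the content of \eqref{UVY1} being a local recursion. One also needs the elementary combinatorial fact that any finite nonempty interior can be exhausted one vertex at a time by such corner moves (each move strips off a maximal outer corner), which is the standard ``climbing'' argument for down-right paths. Finally, the extension from finite $\cI$ to the general (possibly infinite) case is a routine consistency/projective-limit remark: the claim for an infinite down-right path follows because any finite subcollection of $\{T_{f_k},X_z\}$ is contained in the data of some down-right path with finite interior, to which the finite case applies.
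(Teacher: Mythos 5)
Your overall strategy (corner-flipping induction on the size of $\cI$ driven by Lemma \ref{UVY-lm}, then a finite-to-infinite reduction) is the paper's strategy, but the inductive step as you wrote it goes the wrong way and does not close. You pick a down-then-right ``valley'' corner of $z_\centerdot$ at $w=(i-1,j-1)$, whose two edges carry the \emph{input} variables $V_{i-1,j},U_{i,j-1}$ of \eqref{UVY1}, and you flip it outward so that the new path $z'_\centerdot$ passes through $w+e_1+e_2=(i,j)$. This flip \emph{enlarges} the interior: $\cI'=\cI\cup\{w\}$, not $\cI\setminus\{w\}$ (your own phrase ``the new interior variable $X_w$'' already betrays this -- $X_w$ only enters the collection because $w$ has just been added to the interior; also note a down-right path can never contain both $z_m$ and $z_m+e_1+e_2$, so the existence condition you state is vacuous). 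Consequently the path to which you apply the induction hypothesis has strictly \emph{larger} interior, and iterating your move takes you away from the base case (the axes) rather than toward it, so the induction never terminates. There is a second, related problem of logical direction: Lemma \ref{UVY-lm} transfers the law from the triple $(U_{i,j-1},V_{i-1,j},Y_{i,j})$ to $(U_{i,j},V_{i,j},X_w)$, so to use it you must already know that $U_{i,j-1}$ and $V_{i-1,j}$ are independent with the stated Gamma marginals and independent of $Y_{i,j}$ -- but these are edge variables of $z_\centerdot$, i.e.\ part of exactly the statement you are trying to prove, not part of the hypothesis about $z'_\centerdot$. (One could go backwards using the fact that the map in \eqref{UVY} is an involution, but you do not invoke that, and it would not repair the ill-founded induction.)

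The repair recovers the paper's proof: run the induction from small interior to large. Either grow the path outward from the axes (empty interior), where the induction hypothesis concerns the pre-flip path and Lemma \ref{UVY-lm} pushes it to the post-flip path with $\cI'=\cI\cup\{(i-1,j-1)\}$; or, in your ``strip a corner'' narrative, choose a right-then-down ``peak'' corner at a site $x=(i,j)$ with $(i-1,j-1)\in\cI$ and flip it \emph{inward}, which removes exactly the interior vertex $(i-1,j-1)$. Then the lemma's inputs are two edge variables of the smaller path together with the bulk weight $Y_{i,j}$ lying strictly above it, and its outputs $(U_{i,j},V_{i,j},X_{i-1,j-1})$ are precisely the variables of the larger path not shared with the smaller one. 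You should also justify, rather than assert, that $Y_{i,j}$ is independent of the smaller path's collection: this holds because that collection is a function of the initial weights at sites on or below the smaller path, a point worth one sentence in the induction. Your concluding remark reducing an arbitrary (infinite) down-right path to finitely many variables of a finite-interior path is fine and matches the paper's truncation inside a large square.
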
 

\begin{proof}  This is proved first by induction  for down-right paths with finite interior  $\cI$. 
If $z_\centerdot$ coincides with the $x$- and $y$-axes then $\cI$ is empty, and the statement
follows from  assumption \eqref{distr4}.   The inductive step consists of adding
a ``growth corner'' to $\cI$ and an application of Lemma \ref{UVY-lm}.  Namely, 
suppose $z_\centerdot$ goes through the three points 
$(i-1,j)$, $(i-1,j-1)$ and $(i,j-1)$.  Flip the corner over to create a
 new path $z'_\centerdot$ that goes through $(i-1,j)$, $(i,j)$ and $(i,j-1)$. The new
 interior is $\cI'=\cI\cup\{(i-1,j-1)\}$.  Apply Lemma \ref{UVY-lm} with 
\[ U=U_{i,j-1}, \ V=V_{i-1,j}, \ Y=Y_{i,j},\ U'=U_{i,j}, \ V'=V_{i,j}, \  \text{ and } \ 
Y'=X_{i-1,j-1} \] 
to see that the conclusion continues to hold for  $z'_\centerdot$ and  $\cI'$.

To prove the theorem for an arbitrary down-right path it suffices to consider a finite
portion of $z_\centerdot$ and  $\cI$ inside some large square $B=\{0,\dotsc,M\}^2$.  
Apply the first part of the proof  to the modified path that coincides with  $z_\centerdot$ inside $B$
but otherwise follows  the coordinate axes and connects up with $z_\centerdot$
on   the north and east boundaries of $B$. 
\end{proof}

To understand the sense in which Theorem \ref{burkethm}
 is a ``Burke property'',
note its similarity  with Lemma 4.2 in \cite{bala-cato-sepp} whose
connection with M/M/1 queues in series 
 is immediate through the last-passage
representation. 

 %%%%%%%%%%%%%%%%%%  REVERSAL   %%%%%%%%%%%%%%%

\subsection{Reversal}

In a  fixed
rectangle $\Lambda=\{0,\dotsc, m\}\times\{0,\dotsc, n\}$ define 
the reversed  partition function 
\be  Z^*_{i,j}= \frac{Z_{m,n}}{Z_{m-i,n-j}} 
\quad \text{for } \  (i,j)\in\Lambda.  \label{Z*}\ee
Note that for the partition function of the entire rectangle, 
\[   Z^*_{m,n}=Z_{m,n}. \]
Recalling \eqref{UVY1}  make these further definitions:
\be \begin{aligned} 
U^*_{i,j}&=U_{m-i+1,n-j}\quad   &\text{for } \  (i,j)\in\{1,\dotsc, m\}\times\{0,\dotsc, n\}, \\
V^*_{i,j}&=V_{m-i, n-j+1} \quad   &\text{for } \  (i,j)\in\{0,\dotsc, m\}\times\{1,\dotsc, n\}, \\
Y^*_{i,j}&= X_{m-i,n-j} \quad   &\text{for } \  (i,j)\in\{1,\dotsc, m\}\times\{1,\dotsc, n\}.
\end{aligned}\label{U*}\ee

The mapping $*$ is an involution, that is, inside the rectangle
$\Lambda$, $Z^{**}_{i,j}=Z_{i,j}$ and similarly for 
$U$, $V$ and $Y$.  

\begin{proposition} Assume distributions \eqref{distr4}.    Then inside the rectangle 
$\Lambda$ the system $\{ Z^*_{i,j}, U^*_{i,j}, V^*_{i,j}, Y^*_{i,j}\}$ replicates
the properties of the  original system 
 $\{ Z_{i,j}, U_{i,j}, V_{i,j}, Y_{i,j}\}$.  Namely, we have these facts:

{\rm (a)}  $\{U^*_{i,0}, V^*_{0,j}, Y^*_{i,j}:  1\le i\le m, \, 1\le j\le n\}$ are independent 
with distributions 
\be \begin{aligned}
&\text{$(U^*_{i,0})^{-1}$ $\sim$ Gamma$(\theta, 1)$,  $(V^*_{0,j})^{-1}$ $\sim$ Gamma$(\mu-\theta, 1)$, } \\
&\qquad \text{  and 
$(Y^*_{i,j})^{-1}$ $\sim$ Gamma$(\mu, 1)$.}  \end{aligned} \label{distr7}\ee

{\rm (b)} These identities hold:  $Z^*_{0,0}=1$,  
$Z^*_{i,j}=Y^*_{i,j}\bigl( Z^*_{i-1,j} + Z^*_{i,j-1} \bigr)$,    
\begin{align*} 
&U^*_{i,j}=\frac{Z^*_{i,j}}{Z^*_{i-1,j}}\,,  
\quad V^*_{i,j}=\frac{Z^*_{i,j}}{Z^*_{i,j-1}} \,, \\
&U^*_{i,j}=Y^*_{i,j}\Bigl(1+\frac{U^*_{i,j-1}}{V^*_{i-1,j}}\Bigr), \quad \text{and }\ 
V^*_{i,j}=Y^*_{i,j}\Bigl(1+\frac{V^*_{i-1,j}}{U^*_{i,j-1}}\Bigr).  
 \end{align*} 
\label{pr-rev} \end{proposition}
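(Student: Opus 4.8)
The plan is to prove Proposition \ref{pr-rev} by a direct reduction to properties already established for the original system, exploiting the fact that the reversal map $*$ is defined precisely so that the reversed partition functions satisfy the same recursion as the originals. First I would verify part (b), which is purely algebraic and holds for arbitrary positive weights. The identity $Z^*_{0,0}=Z_{m,n}/Z_{m,n}=1$ is immediate. For the recursion, I would compute directly from \eqref{Z*}: writing $Z^*_{i,j}=Z_{m,n}/Z_{m-i,n-j}$ and using the original recursion \eqref{Z1} at the point $(m-i,n-j)$, namely $Z_{m-i+1,n-j}+Z_{m-i,n-j+1}=Y_{m-i,n-j}^{-1}Z_{m-i+1,n-j+1}$ (valid when $m-i,n-j\ge0$ and at least one is positive), one divides through and rearranges to get $Z^*_{i,j}=Y^*_{i,j}(Z^*_{i-1,j}+Z^*_{i,j-1})$ after substituting $Y^*_{i,j}=X_{m-i,n-j}=(U_{m-i+1,n-j}^{-1}+V_{m-i,n-j+1}^{-1})^{-1}$ from \eqref{UVY1} and \eqref{U*}. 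The ratio identities $U^*_{i,j}=Z^*_{i,j}/Z^*_{i-1,j}$ follow by unwinding the definitions: $Z^*_{i,j}/Z^*_{i-1,j}=Z_{m-i+1,n-j}/Z_{m-i,n-j}=U_{m-i+1,n-j}=U^*_{i,j}$, using \eqref{Z2}, and similarly for $V^*$. Finally, once the ratio identities and the recursion for $Z^*$ are in hand, the two consistency relations $U^*_{i,j}=Y^*_{i,j}(1+U^*_{i,j-1}/V^*_{i-1,j})$ and its $V$-counterpart follow by exactly the same inductive argument that gives \eqref{UVY1} from \eqref{Z1}–\eqref{Z2}; alternatively they can be checked by translating each term back through \eqref{U*} into a relation among the original $U,V,Y$ that is an instance of \eqref{UVY1}.

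For part (a), the content is distributional and this is where I would invoke the Burke-type Theorem \ref{burkethm}. The key observation is that the boundary weights of the reversed system, $\{U^*_{i,0}, V^*_{0,j}, Y^*_{i,j}\}$ for $1\le i\le m$, $1\le j\le n$, are exactly the variables $\{T_{f_k}, X_z\}$ attached to a particular down-right path in $\bZ_+^2$ together with its interior points. Concretely: by \eqref{U*}, $U^*_{i,0}=U_{m-i+1,n}$ sits on the top edge $\{(m-i,n),(m-i+1,n)\}$ of the rectangle, $V^*_{0,j}=V_{m,n-j+1}$ sits on the right edge $\{(m,n-j),(m,n-j+1)\}$, and $Y^*_{i,j}=X_{m-i,n-j}$ is an interior variable at $(m-i,n-j)$ for $(i,j)$ in the interior index range. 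So the path $z_\centerdot$ I want is the one that runs along the positive $x$-axis to $(m,0)$, up the right edge of $\Lambda$ to $(m,n)$, and then left along the top edge to $(0,n)$, continuing up the $y$-axis — more precisely the down-right path whose finite non-axis portion traces the north and east boundary of $\Lambda_{m,n}$. Its edge-variables $T_{f_k}$ on the north boundary are the $U$-weights $U_{i,n}$, $1\le i\le m$, and on the east boundary the $V$-weights $V_{m,j}$, $1\le j\le n$; its interior $\cI$ is exactly $\Lambda_{m-1,n-1}$, with interior variables $X_{i,j}$, $0\le i\le m-1$, $0\le j\le n-1$. Matching indices via $i\mapsto m-i+1$ (horizontal), $j\mapsto n-j+1$ (vertical), $(i,j)\mapsto(m-i,n-j)$ (interior), these are precisely $\{U^*_{i,0}\}$, $\{V^*_{0,j}\}$, $\{Y^*_{i,j}\}$. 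Theorem \ref{burkethm} then gives that this family is mutually independent with the gamma marginals in \eqref{distr2}, which is exactly the assertion \eqref{distr7}.

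Once (a) and (b) are both in place, the proposition is complete: (b) shows the reversed weights generate the reversed partition functions through the same equations \eqref{Z1}–\eqref{UVY1} that the original weights use, and (a) shows the reversed boundary weights have the same joint law \eqref{distr4} as the original boundary weights — so the reversed system is, as a whole, a faithful copy of the original. I expect the main obstacle to be purely bookkeeping: getting the index shifts in \eqref{U*} to line up correctly with the edge-labelling convention $f_k=\{z_{k-1},z_k\}$ and $T_{f_k}=U_{z_k}$ or $V_{z_{k-1}}$ from Theorem \ref{burkethm}, and being careful that the north-east boundary path has interior exactly $\Lambda_{m-1,n-1}$ and no off-by-one error in the ranges $1\le i\le m$, $1\le j\le n$. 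The algebra in (b) is routine but must be done with attention to the degenerate cases on the axes (where one of the two terms in the recursion is absent); these are handled by the conventions $Z_{0,0}=1$ and the fact that \eqref{Z1}–\eqref{Z2} already hold on $\bZ_+^2\setminus\{(0,0)\}$.
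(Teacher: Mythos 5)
Your argument is correct and is essentially the paper's own proof, which handles (a) by applying Theorem \ref{burkethm} to the down-right path tracing the north and east sides of $\Lambda_{m,n}$ (exactly the identification of edge and interior variables you make) and (b) by the definitions \eqref{Z*}, \eqref{U*} together with \eqref{UVY1} and \eqref{Z1}--\eqref{Z2}. One cosmetic slip: the identity you attribute to \eqref{Z1} should read $Z_{m-i+1,n-j}+Z_{m-i,n-j+1}=Y_{m-i+1,n-j+1}^{-1}Z_{m-i+1,n-j+1}$; in any case the recursion for $Z^*$ follows directly from $X_{m-i,n-j}^{-1}=U_{m-i+1,n-j}^{-1}+V_{m-i,n-j+1}^{-1}$ and \eqref{Z2}, which is what your substitution actually uses.
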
 
 
 \begin{proof}  Part (a) is a consequence of  Theorem \ref{burkethm}.   Part (b) follows
 from definitions \eqref{Z*} and \eqref{U*} of the reverse variables 
 and properties \eqref{UVY1},  \eqref{Z1} and 
 \eqref{Z2} of the original system. 
 \end{proof}

 Define a dual measure on paths $x_{0,m+n}\in\Pi_{m,n}$ by 
 \be  Q^{*,\w} (x_{0,m+n}) = \frac1{Z_{m,n}}
\prod_{k=0}^{m+n-1} X_{x_k} \label{Q*1}\ee
with the conventions $X_{i,n}=U_{i+1,n}$ for $0\le i<m$ and 
 $X_{m,j}=V_{m,j+1}$ for $0\le j<n$.  This convention is needed
because inside the fixed rectangle $\Lambda$,  \eqref{UVY1} defines
the $X$-weights only away from the north and east boundaries. 
The boundary weights are of the $U$- and $V$-type. 

  Define a reversed
 environment $\w^*$ as a function of $\w$ in $\Lambda$ by  
\[\w^*=(U^*_{i,0}, V^*_{0,j}, Y^*_{i,j}:
 (i,j)\in\{1,\dotsc,m\}\times\{1,\dotsc, n\} ). \]
Part (a) of Proposition \ref{pr-rev} says that $\w^*\overset{d}=\w$. 
  As before, utilize also the definitions
$Y^*_{i,0}=U^*_{i,0}$ and $Y^*_{0,j}=V^*_{0,j}$.
Write \[x^*_k=(m,n)-x_{m+n-k}\] for the reversed path.
For an event $A\subseteq\Pi_{m,n}$ on paths let $A^*=\{x_{0,m+n}: x^*_{0,m+n}\in A\}$.

\begin{lemma}   $Q^{*,\w}(A)$  and $Q^\w(A^*)$ 
 have the same distribution under $\P$. 
\label{lemQ*1}\end{lemma}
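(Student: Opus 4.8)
The plan is to compute the distribution of $Q^{*,\w}(A)$ by re-expressing it entirely in terms of the reversed environment $\w^*$, and then invoke part (a) of Proposition \ref{pr-rev} to conclude that the resulting functional of $\w^*$ has the same $\P$-law as the corresponding functional of $\w$, which will be exactly $Q^\w(A^*)$. First I would write out $Q^{*,\w}(x_{0,m+n})$ using definition \eqref{Q*1}, namely $Z_{m,n}^{-1}\prod_{k=0}^{m+n-1} X_{x_k}$ with the boundary conventions $X_{i,n}=U_{i+1,n}$ and $X_{m,j}=V_{m,j+1}$. Then I would substitute the reversed path $x^*_k=(m,n)-x_{m+n-k}$ and track how the product of $X$-weights along $x_\centerdot$ transforms: the vertex $x_k=(m,n)-x^*_{m+n-k}$, so $X_{x_k}=X_{m-i,n-j}$ where $(i,j)=x^*_{m+n-k}$. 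By definition \eqref{U*} this is exactly $Y^*_{i,j}$ for interior vertices, $U^*_{i,0}$ on the bottom boundary, and $V^*_{0,j}$ on the left boundary — precisely the reversed-environment weights appearing in $Q^{\w^*}$. Reindexing the product over $k=0,\dots,m+n-1$ into a product over the reversed path's vertices, and using $Z^*_{m,n}=Z_{m,n}$, I expect to land on the identity
\[
Q^{*,\w}(x_{0,m+n}) = Q^{\w^*}(x^*_{0,m+n}),
\]
i.e. $Q^{*,\w}(A) = Q^{\w^*}(A^*)$ for every path event $A$.

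The bookkeeping around the north and east boundaries is where I expect the only real friction. Along $x_\centerdot$ the product in \eqref{Q*1} runs over $x_0,\dots,x_{m+n-1}$ and omits the terminal vertex $x_{m+n}=(m,n)$; along the reversed path $x^*_\centerdot$ the product defining $Q^{\w^*}(x^*_{0,m+n})$ runs over $x^*_1,\dots,x^*_{m+n}$ and omits $x^*_0=(m,n)$, since $Q^\w$ as in \eqref{Q} excludes the weight at the origin — here the reversed path starts at $(m,n)$, which maps back to the origin of the starred picture. So the two products are over matching vertex sets. The edge-conventions $X_{i,n}=U_{i+1,n}$ and $X_{m,j}=V_{m,j+1}$ are engineered exactly so that a vertex on the north (resp. east) boundary of $\Lambda$ contributes the $U$- (resp. $V$-) weight that, under the reversal \eqref{U*}, becomes a bottom- (resp. left-) boundary weight of $\w^*$; checking this case-by-case against \eqref{U*} is the step to do carefully, but it is purely a matter of matching indices.

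Once the deterministic identity $Q^{*,\w}(A)=Q^{\w^*}(A^*)$ is in hand, the distributional statement is immediate: define $F(\w) = Q^\w(A^*)$, a measurable functional of the environment restricted to $\Lambda$. Then $Q^{*,\w}(A) = F(\w^*)$. By Proposition \ref{pr-rev}(a) — together with part (b), which guarantees that the starred system is built from $\w^*$ by exactly the same recursions \eqref{Z1}–\eqref{Z2} that build the original system from $\w$ — we have $\w^* \overset{d}{=} \w$ as environments on $\Lambda$. Hence $F(\w^*) \overset{d}{=} F(\w)$, that is, $Q^{*,\w}(A)$ and $Q^\w(A^*)$ have the same law under $\P$, which is the claim. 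The main obstacle, such as it is, is purely combinatorial: verifying that the boundary conventions in \eqref{Q*1} dovetail with \eqref{U*} so that the weight product along $x_\centerdot$ really does reassemble into the $Q^{\w^*}$-weight product along $x^*_\centerdot$, with no stray or missing vertex.
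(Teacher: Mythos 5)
Your proposal is correct and follows essentially the same route as the paper: the deterministic identity $Q^{*,\w}(A)=Q^{\w^*}(A^*)$ via reindexing the weight product along the reversed path (with the boundary conventions of \eqref{Q*1} matching \eqref{U*}, and $Z^*_{m,n}=Z_{m,n}$), followed by Proposition \ref{pr-rev} to get $\w^*\overset{d}=\w$ and hence equality in law. The paper's proof is exactly this two-line computation plus the appeal to Proposition \ref{pr-rev}.
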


\begin{proof} By the definitions, 
\be\begin{aligned} 
Q^{*,\w}(A) = \frac1{Z_{m,n}} \sum_{x_{0,m+n}\in A} \prod_{k=0}^{m+n-1} X_{x_k}
=\frac1{Z^*_{m,n}}\sum_{x_{0,m+n}\in A} \prod_{j=1}^{m+n} Y^*_{x^*_j}
%&= \frac1{Z^*_{m,n}}\sum_{y_{0,m+n}\in A^*} \prod_{j=1}^{m+n} Y^*_{y_j}
= Q^{\w^*}(A^*). 
\end{aligned}\label{Q*2}\ee
By   Proposition \ref{pr-rev},  $Q^{\w^*}(A^*) \overset{d}= Q^{\w}(A^*)$. 
\end{proof}

\begin{remark} $Q^{*,\w}(A)$  and $Q^\w(A)$ do not in general have
the same distribution because their boundary weights are 
different. 
\end{remark}

Under the dual measure the path $x_{0,m+n}$ is a Markov chain. This can be
seen by rewriting \eqref{Q*1} as 
 \be\begin{aligned} 
Q^{*,\w}(x_{0,m+n})=  \prod_{k=0}^{m+n-1} 
\frac{X_{x_k}Z_{x_k}}{Z_{x_{k+1}}} 
= \prod_{k=0}^{m+n-1} \pi^*_{x_k, x_{k+1}}
%=  \sum_{x_{0,m+n}\in A} \prod_{k=0}^{m+n-1} 
%\frac{Z_{x_k}}{Z_{x_{k+1}}}\biggl(\frac{Z_{x_k}}{Z_{x_k+e_1}}
%+ \frac{Z_{x_k}}{Z_{x_k+e_2}}  \biggr)^{-1}\\
%&= \sum_{x_{0,m+n}\in A} \prod_{k=0}^{m+n-1} 
%\frac{Z_{x_{k+1}}^{-1}}{Z_{x_k+e_1}^{-1}
%+ Z_{x_k+e_2}^{-1}}.  
\end{aligned}\label{dualmarkov}\ee
where the last equality defines the Markov kernel $\pi^*_{x,y}$ on the state space
$\Lambda$. 
At points $x$ away from the north and east boundaries
we can write the  kernel as 
\be 
\pi^*_{x,x+e}= \frac{X_{x}Z_{x}}{Z_{x+e}}
=  \frac{Z_{x+e}^{-1}}{Z_{x+e_1}^{-1}
+ Z_{x+e_2}^{-1}}\,, \qquad e\in\{e_1,e_2\}. \label{dualpi}\ee
On the  north and east boundaries
(that is,  either $x=(i,n)$ for some $0\le i<m$ or  $x=(m,j)$ for some $0\le j<n$)  
the  kernel is degenerate because there is only one admissible step.
%\eqref{dualpi} has the convenient feature that to calculate $\pi^*_{x,x+e}$ we need to 

\subsection{Variance and exit point} 

Let 
 \be    \xexit=\max\{ k\ge 0:  \text{$x_i=(i,0)$ for $0\le i\le k$} \}  \label{defxix}\ee
 and 
 \be    \yexit=\max\{ k\ge 0:  \text{$x_j=(0,j)$ for $0\le j\le k$} \}  \label{defxiy}\ee 
denote  the exit points of a path from the $x$- and $y$-axes. For any given path
exactly one  of $\xexit$ and  $\yexit$ is zero. 
In terms of \eqref{defv1}, $\xexit=\vvb(0)$.

For $\theta, x>0$  define the function 
\be 
\functaa(\theta, x)= \int_0^x \bigl(\digamf(\theta)-\log y\bigr) x^{-\theta}y^{\theta-1}e^{x-y}\,dy.
\label{psi1}\ee
The observation 
\[  \functaa(\theta, x)=  -\,\Gamma(\theta)x^{-\theta}e^{x}\,\Cvv[\,\log A, \ind\{A\le x\}\,] 
\]
for  $A\sim\text{Gamma}(\theta, 1)$ shows that 
$ \functaa(\theta, x)>0.  $   Furthermore,  $\E\functaa(\theta, A)=\trigamf(\theta)$.  
 
\begin{theorem}   Assume \eqref{distr4}.  Then for
$m,n\in\bZ_+$ we have these identities: 
\be  \Vvv\bigl[\log Z_{m,n}\bigr] = n\trigamf(\mu-\theta)-m\trigamf(\theta) + 2 \, \Eann_{m,n}   \biggl[ \; \sum_{i=1}^\xexit   \functaa(\theta,  Y_{i,0}^{-1})   \biggr]
\label{var1}\ee
and 
\be  \Vvv\bigl[\log Z_{m,n}\bigr] = -n\trigamf(\mu-\theta)+m\trigamf(\theta) + 2 \, \Eann_{m,n}  \biggl[ \; \sum_{j=1}^\yexit   \functaa(\mu-\theta,  Y_{0,j}^{-1})   \biggr].  
\label{var1.1}\ee
When $\xexit=0$   the    sum  $\sum_{i=1}^\xexit $  is interpreted as $0$, and similarly for  
 $\yexit=0$.  
\label{varxi-thm}\end{theorem}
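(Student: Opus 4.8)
The plan is to derive \eqref{var1} by computing the variance of $\log Z_{m,n}$ through a telescoping decomposition along the south and west boundaries of the rectangle, using the Burke property (Theorem \ref{burkethm}) to control the joint law of the boundary edge weights together with the exit point $\xexit$. The starting observation is that along the two staircase boundaries of $\Lambda_{m,n}$ consisting of the south edge $\{0,\dots,m\}\times\{0\}$ followed by the west edge $\{0\}\times\{0,\dots,n\}$ (traversed as a down-right path), Theorem \ref{burkethm} tells us the corresponding edge weights $U_{1,0},\dots,U_{m,0}$ and $V_{0,1},\dots,V_{0,n}$ are i.i.d.\ with the gamma marginals in \eqref{distr2}, and are moreover independent of the bulk weights $\{Y_{i,j}: i,j\in\bN\}$. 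One then writes $\log Z_{m,n}$ by first fixing the polymer path's exit point from the axes: conditionally on the event $\{\xexit = k\}$ the contribution of the south boundary is $\sum_{i=1}^k \log U_{i,0}$, and similarly $\{\yexit = \ell\}$ contributes $\sum_{j=1}^\ell \log V_{0,j}$.

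First I would set up the identity
\[
\log Z_{m,n} = \sum_{i=1}^m \log U_{i,0} + \log\!\Big( \text{(a functional of the bulk weights and the $V$-column)}\Big),
\]
or more symmetrically use the two relations $U_{m,n} = Z_{m,n}/Z_{m-1,n}$ and $V_{m,n} = Z_{m,n}/Z_{m,n-1}$ from \eqref{Z2} to telescope $\log Z_{m,n}$ down a lattice path to the origin. The cleanest route is: telescope along the top and right edges of $\Lambda_{m,n}$, i.e.\ $\log Z_{m,n} = \sum_{i=1}^m \log U_{i,n} + \sum_{j=1}^n \log V_{0,j}$ (go right along row $n$, then down the left column), and separately $\log Z_{m,n} = \sum_{i=1}^m \log U_{i,0} + \sum_{j=1}^n \log V_{m,j}$. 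Averaging or differencing these, and using that by Burke $\{U_{i,n}\}_i$ and $\{V_{0,j}\}_j$ are independent with the stationary gamma marginals, immediately gives $\E[\log Z_{m,n}]$ as in \eqref{ElogZ} and reduces $\Vvv[\log Z_{m,n}]$ to a covariance computation. The key input for the variance is the formula $\Vvv[\log Z_{m,n}] = \sum_{i=1}^m \Vvv[\log U_{i,n}] + \sum_{j=1}^n \Vvv[\log V_{0,j}] + 2\sum_{i,j}\Cov[\log U_{i,n},\log V_{0,j}]$, where the single-site variances contribute $m\trigamf(\theta) + n\trigamf(\mu-\theta)$ and the cross-covariances must be evaluated.

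The heart of the argument, and the step I expect to be the main obstacle, is evaluating the covariance terms and showing they collapse into the clean expression $2\,\Eann_{m,n}\big[\sum_{i=1}^\xexit \functaa(\theta, Y_{i,0}^{-1})\big]$ minus the symmetric diffusive term. The mechanism is the one familiar from \cite{bala-cato-sepp, cato-groe-06}: one perturbs the boundary parameter $\theta$ and differentiates. Concretely, $\E[\log Z_{m,n}]$ is an explicit linear function of $\theta$ via \eqref{ElogZ}, so $\partial_\theta \E[\log Z_{m,n}] = -m\trigamf(\theta) + n\trigamf(\mu-\theta)$; on the other hand, differentiating $\log Z_{m,n}$ with respect to the law of the south-boundary weights (each $U_{i,0}^{-1}\sim\text{Gamma}(\theta,1)$) brings down a score-function factor, and the chain rule produces $\E[(\partial_\theta \log Z_{m,n})]$ together with a term involving how the quenched polymer measure weights the exit point. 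Specifically one uses the representation $\partial_\theta \log Z_{m,n} = \sum_{i=1}^m Q^\w_{m,n}(\xexit \ge i)\cdot(\text{stuff})$, pairs the score-function identity $\partial_\theta \E[g(U_{i,0}^{-1})] = -\Cov[g(U_{i,0}^{-1}), \log U_{i,0}^{-1}]$ (recall $\digamf,\trigamf$ are the mean and variance of $\log$ of a Gamma$(\theta,1)$), and recognizes $\functaa(\theta,x)$ from \eqref{psi1} precisely as the object $-\Gamma(\theta)x^{-\theta}e^x \Cvv[\log A, \ind\{A\le x\}]$ that appears after integrating by parts in the $\theta$-derivative. Assembling these — the explicit $\partial_\theta\E[\log Z]$, the variance identity $\Vvv[\log Z] = \partial_\theta\E[\log Z] + (\text{boundary variance sum}) + (\text{exit-point term})$ — and matching constants yields \eqref{var1}. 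Identity \eqref{var1.1} then follows by the reversal symmetry of Proposition \ref{pr-rev} and Lemma \ref{lemQ*1} (or equivalently by running the same argument perturbing $\mu-\theta$ via the west boundary), swapping the roles of the two axes and of $\theta \leftrightarrow \mu-\theta$, which also explains the sign flip on the diffusive term. The routine parts are the interchange of differentiation and expectation (justified by the smoothness and integrability of gamma densities) and the integration-by-parts bookkeeping; the conceptual crux is correctly identifying $\functaa$ as the covariance-type remainder and tracking which exit variable ($\xexit$ vs.\ $\yexit$) appears with which sign.
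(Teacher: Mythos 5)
Your core mechanism is the same as the paper's (perturb the south-boundary parameter, use the score function of the gamma density, recognize $\functaa$ from \eqref{psi1} as the resulting covariance-type kernel, and let the quenched measure weight the exit point), but the assembly has a genuine gap exactly where the negative term $-m\trigamf(\theta)$ in \eqref{var1} must be produced. Writing $\north=\log Z_{m,n}-\log Z_{0,n}$, $\west=\log Z_{0,n}$, $\south=\log Z_{m,0}$, $\east=\log Z_{m,n}-\log Z_{m,0}$, you reduce the problem to the cross-covariance $\Cvv(\north,\west)$, but you never supply the step that converts it. The paper uses $\west+\north=\south+\east$ to get $\Cvv(\west,\north)=\Cvv(\south,\north)+\Cvv(\east,\north)-\Vvv(\north)$ and then kills $\Cvv(\east,\north)$ by Theorem \ref{burkethm}, since the north-row and east-column edge variables lie on a common down-right path; this is precisely where $-\Vvv(\north)=-m\trigamf(\theta)$ comes from. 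Your ``assembled'' identity $\Vvv[\log Z]=\partial_\theta\E[\log Z]+(\text{boundary variance sum})+(\text{exit-point term})$ does not reproduce \eqref{var1} with the constants you yourself list: $\bigl(-m\trigamf(\theta)+n\trigamf(\mu-\theta)\bigr)+\bigl(m\trigamf(\theta)+n\trigamf(\mu-\theta)\bigr)$ gives $2n\trigamf(\mu-\theta)$ plus the exit term, not $n\trigamf(\mu-\theta)-m\trigamf(\theta)+2\,\Eann[\cdots]$. Also, ``by Burke $\{U_{i,n}\}_i$ and $\{V_{0,j}\}_j$ are independent'' cannot be meant as independence \emph{between} the two families: they do not lie on a down-right path, and if they were mutually independent the cross-covariance would vanish and the theorem would be false; Burke only gives i.i.d.\ structure within each family.

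The second, related gap is a conflation of two different $\theta$-derivatives. The derivative you read off \eqref{ElogZ}, $\partial_\theta\E\log Z_{m,n}=-m\trigamf(\theta)+n\trigamf(\mu-\theta)$, moves \emph{both} boundary parameters ($\theta$ on the south axis and $\mu-\theta$ on the west axis) simultaneously. Combined with the score function it yields $\Cvv(\log Z,\west)-\Cvv(\log Z,\south)$, and combined with the pathwise (uniform-coupling) derivative it yields only the \emph{difference} of the two exit-point expectations, i.e.\ the relation obtained by subtracting \eqref{var1.1} from \eqref{var1} — it cannot give either identity separately. What the proof requires is the derivative with respect to the south-boundary parameter \emph{alone}, with the west-boundary law held fixed; the paper makes this precise by passing to a three-parameter system $(\theta,\rho,\mu)$, proving $\Cvv(\south,\north)=-\partial_\theta\E(\north)$ in that decoupled setting, and then evaluating this decoupled derivative pathwise to get $\Eann\bigl[\sum_{i=1}^{\xexit}\functaa(\theta,Y_{i,0}^{-1})\bigr]$. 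So your score-function idea is the right one, but it must be applied to the south boundary only and to the covariance $\Cvv(\south,\north)$ produced by the first (missing) step; with those two repairs the argument closes, and your reversal/transposition argument for \eqref{var1.1} is fine.
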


\begin{proof}  We prove \eqref{var1}.  Identity \eqref{var1.1} then follows by a 
reflection across the diagonal. 
Let us abbreviate temporarily, according to the compass 
directions of the rectangle $\Lambda_{m,n}$,
\begin{align*}
\north =\log Z_{m,n}-\log Z_{0,n},\quad \south =\log Z_{m,0},
\quad \east =\log Z_{m,n}-\log Z_{m,0},\quad \west =\log Z_{0,n}. 
\end{align*}
Then 
\be\begin{aligned}
\Vvv\bigl[\log Z_{m,n}\bigr]&=\Vvv(\west +\north )=\Vvv(\west )+\Vvv(\north )+2\Cvv(\west ,\north )\\
&=\Vvv(\west )+\Vvv(\north )+2\Cvv(\south +\east -\north ,\north )\\
 &= \Vvv(\west )-\Vvv(\north )+2\Cvv(\south ,\north ).\\
\end{aligned}\label{aux3}\ee
The last equality came from the independence of $\east $ and $\north $, 
 from Theorem \ref{burkethm} and \eqref{Z2}.
By  assumption \eqref{distr4}  $\Vvv(\west ) =n\trigamf(\mu-\theta)$,
 and by  Theorem \ref{burkethm} 
 $\Vvv(\north )= m\trigamf(\theta)$.
%\begin{align*}
%\Vvv(\west )&=\Vvv\Bigl[\;\sum_{j=1}^n\log V_{0,j}\Bigr]
%=n\Vvv(\log V) =n\trigamf(\mu-\theta) \end{align*}
%and similarly $\Vvv(\north )= m\trigamf(\theta)$.  

To prove \eqref{var1} it remains to work on $\Cvv(\south ,\north )$.  In the remaining part of the
proof we wish to differentiate with respect to the parameter $\theta$ of the weights
$Y_{i,0}$  on
the $x$-axis (term $\south $) without involving the other weights. Hence now think of a system
with three independent parameters $\theta$, $\rho$ and $\mu$ and with weight 
distributions  (for $i,j\in\bN$)  
\[  
\text{$Y_{i,0}^{-1}$ $\sim$ Gamma$(\theta, 1)$,  $Y_{0,j}^{-1}$ $\sim$ Gamma$(\rho, 1)$,   and 
$Y_{i,j}^{-1}$ $\sim$ Gamma$(\mu, 1)$.}  \]
We first show that 
\be  \Cvv(\south ,\north ) =-\,\frac{\partial}{\partial\theta} \E(\north ).  \label{cov3}\ee
The variable $\south $ is a sum
\[  \south =\sum_{i=1}^m \log U_{i,0}.  \]
The joint density of the vector of summands $(\log U_{1,0},\dotsc, \log U_{m,0})$ is 
\[  g_\theta(y_1,\dotsc,y_m)= \Gamma(\theta)^{-m} 
\exp\Bigl( -\theta\sum_{i=1}^m y_i -\sum_{i=1}^{m}e^{-y_i}  \Bigr)  \]
on $\bR^m$. This comes from the
product of  Gamma$(\theta, 1)$ distributions.  
The density of $\south $ is 
\[  
f_\theta(s)= \Gamma(\theta)^{-m}e^{-\theta s}\int_{\bR^{m-1}} 
\exp\Bigl( -\sum_{i=1}^{m-1}e^{-y_i} -e^{-s+y_1+\dotsm+y_{m-1}}\Bigr) \,dy_{1,m-1}.
\]
We also see that, given $\south $,  the joint distribution of $(\log U_{1,0},\dotsc, \log U_{m,0})$
does not depend on $\theta$.  Consequently in the calculation below the conditional
expectation does not depend on $\theta$.
\be\begin{aligned}
\frac{\partial}{\partial\theta} \E(\north )
&=\frac{\partial}{\partial\theta} \int_\bR \E(\north \,\vert\, \south =s) f_\theta(s)\,ds
=\int_\bR \E(\north \,\vert\, \south =s)  \frac{\partial   f_\theta(s)}{\partial\theta}\,ds\\
&=\int_\bR \E(\north \,\vert\, \south =s) \Bigl( -s-m\frac{\Gamma'(\theta)}{\Gamma(\theta)}\,\Bigr)f_\theta(s)\,ds\\
&=-\E(\north \south )+ \E(\north )m\E(\log U) = -\E(\north \south )+ \E(\north )\E(\south ) \\
&=-\Cvv(\north ,\south ). 
\end{aligned}\label{aux4}\ee
To justify taking ${\partial}/{\partial\theta}$ inside the integral  we check that for all
$0<\theta_0<\theta_1$, 
\be
\int_\bR \E(\,\abs{\north }\,\vert\, \south =s) \sup_{\theta\in[\theta_0,\theta_1]} 
\biggl\lvert  \frac{\partial   f_\theta(s)}{\partial\theta}\biggr\rvert \,ds <\infty.  \label{check2}\ee
Since 
\begin{align*}
\sup_{\theta\in[\theta_0,\theta_1]} 
\biggl\lvert  \frac{\partial   f_\theta(s)}{\partial\theta}\biggr\rvert
\le  C(1+\abs{s}) \bigl(   f_{\theta_0}(s) +  f_{\theta_1}(s) \bigr) 
\end{align*}
it suffices to get a bound for a fixed $\theta>0$:
\begin{align*}
\int_\bR \E(\,\abs{\north }\,\vert\, \south =s) (1+\abs{s})  f_\theta(s)\,ds
&= \E\bigl[\, \abs{\north } (1+\abs{\south })\,\bigr]\\
&\le \norm{\north }_{L^2(\P)}  \, \norm{1+\south }_{L^2(\P)} <\infty  
\end{align*}
because $\north $ and $\south $ are sums of i.i.d.\ random variables with all moments.  
Dominated convergence and this   integrability
bound \eqref{check2} also give the continuity of $\theta\mapsto \Cvv(\north ,\south )$.  

The next step is to calculate $({\partial}/{\partial\theta}) \E(\north )$ by a coupling. 
Sometimes we add a sub- or superscript $\theta$ to expectations and covariances  
  to emphasize their dependence on the parameter $\theta$ of the distribution
of the initial weights on the $x$-axis.  We also introduce a direct functional
dependence on $\theta$  in $Z_{m,n}$  by realizing the weights $U_{i,0}$ 
as functions of uniform random variables.   Let
\be  F_\theta(x)=\int_0^x \frac{y^{\theta-1}e^{-y}}{\Gamma(\theta)}\,dy,   
\quad x\ge 0, \label{gamma1}\ee
be the c.d.f.\ of the Gamma$(\theta,1)$ distribution and 
$H_\theta$ its inverse function, defined on $(0,1)$, that satisfies
$\eta=F_\theta(H_\theta(\eta))$ for $0<\eta<1$.   Then if $\eta$ is a 
Uniform$(0,1)$ random variable,  $U^{-1}=H_\theta(\eta)$  is  a
Gamma$(\theta,1)$ random variable.  Let  $\eta_{1,m}=(\eta_1,\dotsc,\eta_m)$
be a vector of Uniform$(0,1)$ random variables.  
We redefine $Z_{m,n}$ as a function of the random variables 
$\{ \eta_{1,m};  Y_{i,j}:  {(i,j)\in\bZ_+\times\bN}\}$ without changing its distribution: 
\be  Z_{m,n}(\theta)
 = \sum_{x_{\centerdot}\in\Pi_{m,n}} \prod_{i=1}^\xexit H_\theta(\eta_{i})^{-1} \cdot 
 \prod_{k=\xexit+1}^{m+n} Y_{x_k}. 
\label{defZmn3}\ee
%Recall that $\xexit$  is the exit point of a path from the $x$-axis \eqref{defxix}. 
% As a function of $\{\eta_{1,m}; Y_{i,j}: (i,j)\in\bZ_+\times\bN\}$    this $Z_{m,n}(\theta)$ 
%has the same distribution as $Z_{m,n}$  defined earlier in \eqref{defZmn}.  
 
Next we look for the derivative:  
\begin{align*}
 \frac{\partial}{\partial\theta}\log Z_{m,n}(\theta) 
= &\frac 1 { Z_{m,n}(\theta)}
   \sum_{x_\centerdot\in\Pi_{m,n}} 
 \biggl( - \sum_{i=1}^\xexit  \frac{\partial H_\theta(\eta_{i}) }{\partial\theta}  H_\theta(\eta_{i})^{-1}  \biggr) \\
 & \qquad \times  \prod_{i=1}^\xexit H_\theta(\eta_{i})^{-1} \cdot 
 \prod_{k=\xexit+1}^{m+n} Y_{x_k}.  
  \end{align*}
Differentiate implicitly
$\eta=F(\theta, H(\theta, \eta))$  to find
\begin{align}
 \frac{\partial H(\theta,\eta) }{\partial\theta}  =
-\, \frac{({\partial F }/{\partial\theta})(\theta, H(\theta, \eta))} 
{({\partial F }/{\partial x})(\theta, H(\theta, \eta))} \,.
\label{dH}\end{align} 
(We write $F(\theta,x)=F_\theta(x)$ and $H(\theta,\eta)=H_\theta(\eta)$
 when subscripts are not 
convenient.)  
If we define 
\be  \functaa(\theta, x)= -\,\frac1x\cdot \frac{{\partial F(\theta, x) }/{\partial\theta}} 
{{\partial F(\theta, x) }/{\partial x}}\, ,\quad \theta, x>0,  \label{psi}\ee
 we can write 
\be\begin{aligned}
 \frac{\partial}{\partial\theta}\log Z_{m,n}(\theta) 
= \frac 1 { Z_{m,n}(\theta)}
   \sum_{x_{\centerdot}\in\Pi_{m,n}} 
 \biggl\{ - \sum_{i=1}^\xexit   \functaa(\theta,  H_\theta(\eta_{i}))   \biggr\} 
  \prod_{i=1}^\xexit H_\theta(\eta_{i})^{-1} \cdot 
 \prod_{k=\xexit+1}^{m+n} Y_{x_k}.  
  \end{aligned} \label{dlogZ}\ee
Direct calculation  shows that  \eqref{psi} agrees with the earlier definition \eqref{psi1} of $\functaa$. 

Since
$  \digamf(\theta)= %\E(\log A) = 
{\Gamma(\theta)}^{-1}\int_0^\infty   (\log y) {y^{\theta-1}e^{-y}} \,dy,
$
we also have 
\be 
\functaa(\theta, x)= \int_x^\infty \bigl(-\digamf(\theta)+\log y\bigr) x^{-\theta}y^{\theta-1}e^{x-y}\,dy.
\label{psi1.1}\ee
 For $x\le 1$
drop $e^{-y}$ and compute the integrals in \eqref{psi1}, while 
for  $x\ge 1$ apply 
   H\"older's inequality judiciously to \eqref{psi1.1}. This shows 
\be  0< \functaa(\theta, x) \le 
\begin{cases}   C(\theta)(1-\log x)    &\text{for $0<x\le 1$} \\[5pt]
  C(\theta) x^{-1/4} 
&\text{for $x\ge 1$.}  \end{cases}  \label{psi2}\ee
 In particular,    
$\functaa(\theta,  H_\theta(\eta))$  with  $\eta\sim\text{Uniform}(0,1)$
  has
an exponential moment: for small enough $t>0$,  
\be\begin{aligned} 
  \E\bigl[ e^{t\functaa(\theta,H_\theta(\eta) )} \,\bigr]=   
\int_0^\infty  e^{t\functaa(\theta, x)} \,
\frac{x^{\theta-1}e^{-x}}{\Gamma(\theta)}\,dx <\infty. 
\end{aligned}\label{psimom}\ee

Let $\wt\E$ denote expectation over the variables $\{Y_{i,j}\}_{ (i,j)\in\bZ_+\times\bN}$
(that is, excluding the weights on the $x$-axis). 
From \eqref{aux4} we get
\be\begin{aligned}
&-\int_{\theta_0}^{\theta_1} \Cvv^\theta(\north ,\south )\,d\theta = \E^{\theta_1}(\north )-  \E^{\theta_0}(\north )\\
&\quad = \wt\E \int_{(0,1)^m} d\eta_{1,m}  \bigl(  \log Z_{m,n}(\theta_1) -  \log Z_{m,n}(\theta_0) \bigr)\\
&\quad = \wt\E \int_{(0,1)^m} d\eta_{1,m}  \int_{\theta_0}^{\theta_1} 
 \frac\partial{\partial\theta}  \log Z_{m,n}(\theta) \, d\theta \\
&\quad =   \int_{\theta_0}^{\theta_1}   d\theta \; \wt\E \int_{(0,1)^m} d\eta_{1,m} 
\;  \frac\partial{\partial\theta}  \log Z_{m,n}(\theta). \\
\end{aligned}\label{aux4.05}\ee
The last equality above came from Tonelli's theorem,  justified 
by \eqref{dlogZ} which shows that $(\partial/{\partial\theta})  \log Z_{m,n}(\theta)$ is 
always negative.  

From \eqref{dlogZ} , upon replacing $H(\theta, \eta_i)$ with $Y_{i,0}^{-1}$, 
\be\begin{aligned}
 \frac{\partial}{\partial\theta}\log Z_{m,n}(\theta) 
&= \frac 1 { Z_{m,n}(\theta)}
   \sum_{x_{\centerdot}\in\Pi_{m,n}} 
 \biggl\{ - \sum_{i=1}^\xexit   \functaa(\theta,  Y_{i,0}^{-1})   \biggr\} 
 \prod_{k=1}^{m+n} Y_{x_k} \\
&= -\,  E^{Q^\om_{m,n}}   \biggl[ \; \sum_{i=1}^\xexit   \functaa(\theta,  Y_{i,0}^{-1})   \biggr].  
  \end{aligned} \label{dlogZ2}\ee
Consequently from \eqref{aux4.05} 
\[  \int_{\theta_0}^{\theta_1} \Cvv^\theta(\north ,\south )\,d\theta =
\int_{\theta_0}^{\theta_1} 
\E^\theta  E^{Q^\om_{m,n}}   \biggl[ \; \sum_{i=1}^\xexit   \functaa(\theta,  Y_{i,0}^{-1})   \biggr]\,d\theta. \]
Earlier we justified the continuity of $\Cvv^\theta(\north ,\south )$ as a function of $\theta>0$.
Same is true for the integrand on the right.   Hence we get 
\be    \Cvv^\theta(\north ,\south ) =
 \Eann^\theta_{m,n}   \biggl[ \; \sum_{i=1}^\xexit   \functaa(\theta,  Y_{i,0}^{-1})   \biggr].
\label{aux4.1}\ee
Putting this back into \eqref{aux3} completes the proof. 
\end{proof}

%%%%%%%%%%%%%%%%%% upper bound %%%%%%%%%%%%%%%
\section{Upper bound for the model with boundaries} 
\label{sec:ub-bd}
In this section we prove the upper bound of Theorem \ref{var-bd-thm}. 
Assumption \eqref{distr4} is in force, with $0<\theta<\mu$ fixed. 
While keeping $\mu$ fixed we shall also consider an alternative
value $\lambda\in(0,\mu)$ and then assumption \eqref{distr4}
is in force but with $\lambda$ replacing $\theta$. 
Since $\mu$ remains fixed we omit
 dependence on $\mu$ from all notation. At times dependence
on $\lambda$ and $\theta$ has to be made explicit,  as
for example in the next lemma where  $\Vvv^\lambda$ denotes
 variance computed under assumption \eqref{distr4}
with  $\lambda$ replacing $\theta$. 

\begin{lemma}   Consider $0<\delta_0<\theta<\mu$ fixed.  
Then there exists a constant $C<\infty$ such that for all  $\lambda\in[\delta_0,\theta]$,
\be
 \Vvv^\lambda\bigl[\log Z_{m,n}\bigr]\le   \Vvv^\theta\bigl[\log Z_{m,n}\bigr] 
 +C(m+n)(\theta-\lambda).  
  \label{varineq1}\ee
A single constant  $C$ works for all $\delta_0<\theta<\mu$ that vary  in a compact set.  
\end{lemma}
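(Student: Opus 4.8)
The plan is to use the exit-point identity from Theorem~\ref{varxi-thm} to compare the two variances. Fix $0<\delta_0<\theta<\mu$ and take $\lambda\in[\delta_0,\theta]$. Apply identity \eqref{var1} at the parameter $\lambda$ (with $\mu$ held fixed) and at the parameter $\theta$, and subtract. Since $\trigamf$ is $C^1$ on $(0,\infty)$ and $\lambda$ ranges over the compact set $[\delta_0,\theta]$, the deterministic terms contribute
\[
n\bigl(\trigamf(\mu-\lambda)-\trigamf(\mu-\theta)\bigr)-m\bigl(\trigamf(\lambda)-\trigamf(\theta)\bigr)
\le C(m+n)(\theta-\lambda)
\]
by the mean value theorem, with $C$ uniform over the stated compact parameter range. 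So everything reduces to controlling the difference of the two annealed exit-point sums.

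Next I would bound the remaining term
\[
2\,\Eann^{\lambda}_{m,n}\Bigl[\sum_{i=1}^{\xexit}\functaa(\lambda,Y_{i,0}^{-1})\Bigr]
-2\,\Eann^{\theta}_{m,n}\Bigl[\sum_{i=1}^{\xexit}\functaa(\theta,Y_{i,0}^{-1})\Bigr].
\]
The clean way is to go back to the proof of Theorem~\ref{varxi-thm}: there \eqref{aux4.1} identifies this annealed sum (up to the factor $2$) with $\Cvv^{\lambda}(\south,\north)=-\tfrac{\partial}{\partial\lambda}\E^{\lambda}(\north)$, and the representation \eqref{dlogZ2}--\eqref{aux4.05} shows $\tfrac{\partial}{\partial\lambda}\E^{\lambda}(\north)\le 0$, i.e.\ $\lambda\mapsto\E^{\lambda}(\north)$ is nonincreasing and $\Cvv^{\lambda}(\south,\north)\ge 0$. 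The difference of the two annealed exit sums is therefore
\[
2\int_{\lambda}^{\theta}\frac{d}{d\nu}\,\Eann^{\nu}_{m,n}\Bigl[\sum_{i=1}^{\xexit}\functaa(\nu,Y_{i,0}^{-1})\Bigr]\,d\nu
\]
and it suffices to bound the integrand by $C(m+n)$ uniformly for $\nu\in[\delta_0,\theta]$. Each summand $\functaa(\nu,Y_{i,0}^{-1})$ is nonnegative with an exponential moment uniform in $\nu$ over the compact range, by \eqref{psi2}--\eqref{psimom}; and $\xexit\le m$; so the expectation of the sum is at most $Cm$, and a similar estimate (differentiating \eqref{psi} in $\nu$ and using the uniform integrability bounds) controls the derivative. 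Hence the whole exit-point difference is $\le C(m+n)(\theta-\lambda)$, which combined with the deterministic part gives \eqref{varineq1}.

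An alternative, slightly cleaner route avoids differentiation: couple the $\lambda$- and $\theta$-systems through the inverse-cdf construction \eqref{defZmn3}, so that the $x$-axis weights are $H_\nu(\eta_i)$ for a common vector of uniforms $\eta_{1,m}$, and write $\log Z_{m,n}(\theta)-\log Z_{m,n}(\lambda)=\int_\lambda^\theta \tfrac{\partial}{\partial\nu}\log Z_{m,n}(\nu)\,d\nu$ using \eqref{dlogZ}. Taking $\Vvv$ and expanding, the cross terms and the change in $\Vvv(\north)$ are again handled by the monotonicity of $\nu\mapsto\log Z_{m,n}(\nu)$ and the uniform moment bounds on $\functaa$, while $\Vvv(\west)=n\trigamf(\mu-\nu)$ is explicit and $C^1$ in $\nu$. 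Either way the main obstacle is the uniformity of the constant: I need the bounds \eqref{psi2}, \eqref{psimom}, and the $L^2$-control of $\north,\south$ from the proof of Theorem~\ref{varxi-thm} to hold with a single constant as $\nu$ (and $\theta,\mu$) vary over a compact set, which is why the lemma is phrased with $\delta_0$ bounding $\lambda$ away from $0$. This is a matter of tracking constants through those estimates rather than a new idea, but it is the step that requires care.
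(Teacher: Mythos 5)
There is a genuine gap, and it sits exactly at the step you flag as "a matter of tracking constants": the claim that $\frac{d}{d\nu}\,\Eann^{\nu}_{m,n}\bigl[\sum_{i=1}^{\xexit}\functaa(\nu,Y_{i,0}^{-1})\bigr]$ can be bounded by $C(m+n)$. Differentiating this annealed expectation in $\nu$ does not only differentiate the integrand $\functaa(\nu,H_\nu(\eta_i))$ (which \eqref{psi3}-type bounds do control); it also differentiates the measure, i.e.\ the $\nu$-dependence of the boundary-weight law and of the quenched polymer measure $Q^{\nu}$. Differentiating $Q^{\nu}$ brings down another factor $\sum_{i\le\xexit}\functaa(\nu,\cdot)$ and produces quenched covariance terms whose annealed size is a priori of order $m^2$, not $m$; equivalently, the quantity you need to control is essentially $\partial_\nu\Cvv^{\nu}(\north,\south)$, a second derivative of $\E^{\nu}(\north)$, for which no bound of order $m+n$ is available at this stage (indeed the whole point of Theorem \ref{var-bd-thm} is that even the first derivative $\Cvv^{\nu}(\north,\south)$ is nontrivial to estimate). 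The uniform exponential moments of $\functaa$ and $\xexit\le m$ give $\Eann[\sum^{\xexit}\functaa]\le Cm$, but that bounds the quantity itself, not its $\nu$-derivative, and a bound $Cm$ on the difference is useless since the lemma needs the factor $(\theta-\lambda)$. Your alternative coupling route has the same problem: $\Vvv(\log Z(\lambda)-\log Z(\theta))$ and the cross term can be of order $\bigl(m(\theta-\lambda)\bigr)^2$, which is not $\le C(m+n)(\theta-\lambda)$ in the regime where the lemma is later applied ($\theta-\lambda$ of order a small constant, $m\sim N$).

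The missing idea in the paper's proof is a one-sided argument that avoids ever differentiating the measure quantitatively, and it forces a different choice of identity. The paper starts from \eqref{var1.1} (the $\yexit$ identity), not \eqref{var1}, and splits the difference of annealed exit sums into (i) a change of the function $\functaa(\mu-\lambda,\cdot)\to\functaa(\mu-\theta,\cdot)$ at fixed quenched measure $Q^{\lambda}$, bounded by $Cn(\theta-\lambda)$ after replacing $\yexit$ by $n$ and using \eqref{psi3}; and (ii) a change of measure $Q^{\lambda}\to Q^{\theta}$ at fixed nonnegative function, which is not estimated at all but simply has sign $\le 0$, because $\yexit$ is stochastically increasing in the boundary parameter (shown via $(\partial/\partial B_\ell)Q^\w\{\yexit\ge k\}=-B_\ell^{-1}\Cov^{Q^\w}[\ind\{\yexit\ge k\},\ind\{\yexit\ge\ell\}]<0$). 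If you instead work with \eqref{var1} as you propose, the measure-change term has the unfavorable sign ($\xexit$ is stochastically \emph{decreasing} in the parameter, so the difference is $\ge 0$), and you are forced back into exactly the quantitative derivative-of-the-measure estimate that is not available. So the fix is not bookkeeping of constants: you need to switch to the $\yexit$ identity and use the monotonicity/sign argument in place of the derivative bound.
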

\begin{proof}
Identity  \eqref{var1.1} will be convenient for $\lambda<\theta$: 
\begin{align}
& \Vvv^\lambda\bigl[\log Z_{m,n}\bigr]-  \Vvv^\theta\bigl[\log Z_{m,n}\bigr] \nn\\[6pt]
&\qquad = 
 -n\trigamf(\mu-\lambda)+m\trigamf(\lambda) +n\trigamf(\mu-\theta)-m\trigamf(\theta) \label{aux5.97}\\[2pt]
 &\qquad\quad 
 + 2 \, \E^\lambda  E^{Q^\om_{m,n}}   \biggl[ \; \sum_{j=1}^\yexit   \functaa(\mu-\lambda,  Y_{0,j}^{-1})   
 \biggr]
-   2 \, \E^\theta  E^{Q^\om_{m,n}}   \biggl[ \; \sum_{j=1}^\yexit   \functaa(\mu-\theta,  Y_{0,j}^{-1})   
 \biggr].  \label{aux5.98}
 \end{align} 
$\trigamf$ is  continuously differentiable   and so 
\[  \text{line \eqref{aux5.97} } \le C(m+n)(\theta-\lambda). \]

We work on line \eqref{aux5.98}. 
As in the proof of Theorem \ref{varxi-thm} we replace the weights
on the $x$- and  $y$-axes with functions of uniform random 
variables. We need explicitly only the ones on the $y$-axis,
denote these by  $\eta_j$. 
%This way  the dependence on the parameters 
%$\lambda$ and $\theta$ disappears from the environment distribution,
Write $\wt\E$ for the expectation over the uniform  variables and
the bulk weights 
$\{Y_{i,j}: i, j\ge 1\}$. This expectation no longer depends
on $\lambda$ or $\theta$.  The quenched measure $Q^\w$ does carry
dependence on these parameters, and we express that by a superscript
$\theta$ or $\lambda$.
\begin{align}
& \text{line \eqref{aux5.98} without the factor $2$}  \nn\\
 &= \; \wt\E  E^{Q^{\lambda,\om}_{m,n}}   \biggl[ \; \sum_{j=1}^\yexit   \functaa({\mu-\lambda}, H_{\mu-\lambda}(\eta_j)) \biggr]
-
 \wt\E  E^{Q^{\theta,\om}_{m,n}}   \biggl[ \; \sum_{j=1}^\yexit   \functaa({\mu-\theta}, H_{\mu-\theta}(\eta_j)) \biggr]%\label{aux5.99}
 \nn\\
&
=  \; \wt\E  E^{Q^{\lambda,\om}_{m,n}}   \biggl[ \; \sum_{j=1}^\yexit   \functaa({\mu-\lambda}, H_{\mu-\lambda}(\eta_j)) \biggr]
-
 \wt\E  E^{Q^{\lambda,\om}_{m,n}}   \biggl[ \; \sum_{j=1}^\yexit   \functaa({\mu-\theta}, H_{\mu-\theta}(\eta_j)) \biggr]
 \label{aux6}\\
&\qquad \qquad + \; 
 \wt\E  E^{Q^{\lambda,\om}_{m,n}}   \biggl[ \; \sum_{j=1}^\yexit   \functaa({\mu-\theta}, H_{\mu-\theta}(\eta_j)) \biggr]
-
 \wt\E  E^{Q^{\theta,\om}_{m,n}}   \biggl[ \; \sum_{j=1}^\yexit   \functaa({\mu-\theta}, H_{\mu-\theta}(\eta_j)) \biggr].
 \label{aux7} 
 \end{align}

We first show that line \eqref{aux7} is $\le 0$, by showing that, as the parameter $\rho$ 
in $Q^{\om, \rho}_{m,n}$ increases, the random variable $\yexit$ increases stochastically. 
Write $B_j=H_{\mu-\rho}(\eta_j)$ for the Gamma($\mu-\rho, 1$) variable that gives
the weight $Y_{0,j}=B_j^{-1}$  
   in the
definition of $Q^{\om, \rho}_{m,n}$. For a given $\mu$,  $B_j$ decreases as $\rho$
increases.  Thus it suffices
to show that,  for
  $1\le k, \ell\le n$, 
  \be (\partial/{\partial B_\ell}) Q^\om\{\yexit\ge k\}\le 0. \label{aux7.01}\ee 
 Write $W=\prod_{j=1}^{\yexit} B_j^{-1}\cdot \prod_{k=\yexit+1}^{m+n} Y_{x_k}$
  for the total weight of a path $x_\centerdot$
(the numerator of  the quenched polymer probability of the path). 
\begin{align*}
 &\frac\partial{\partial B_\ell} Q^\om\{\yexit\ge k\}  
 =\frac\partial{\partial B_\ell} 
\biggl( \frac1{Z_{m,n}} \sum_{x_\centerdot}   
\ind\{\yexit\ge k\} W \biggr) \\
&= \frac1 { Z_{m,n}} 
 \sum_{x_\centerdot} \ind\{\yexit\ge k\} \ind\{\yexit\ge \ell\} (-B_\ell^{-1}) W   \\
&\qquad\qquad\qquad\qquad
 - \;\frac1 { Z_{m,n}^2} \Bigl(\;  \sum_{x_\centerdot} \ind\{\yexit\ge k\} W\Bigr)\cdot 
\Bigl( \; \sum_{x_\centerdot} \ind\{\yexit\ge \ell\} (-B_\ell^{-1}) W\Bigr)  \\
 &= -B_\ell^{-1}  \Cov^{Q^\om} \bigl[  \ind\{\yexit\ge k\} ,\ind\{\yexit\ge \ell\} \bigr] <0. 
 \end{align*}
 Thus we can bound line  \eqref{aux7} above by $0$. 

On  line \eqref{aux6}  inside the brackets only $\yexit$ is random under  $Q^{\om,\lambda}_{m,n}$.
We replace $\yexit$ with its upper bound  $n$ and then we are left with integrating over 
uniform variables $\eta_j$.  
\begin{align}
\abs{\text{ line \eqref{aux6} }} &\le  \; 
\wt\E  E^{Q^{\lambda,\om}_{m,n}}   \biggl[ \; \sum_{j=1}^\yexit   
\bigl\lvert \functaa({\mu-\lambda}, H_{\mu-\lambda}(\eta_j))  - 
 \functaa({\mu-\theta}, H_{\mu-\theta}(\eta_j))   \bigr\rvert \; \biggr]\nn\\
&\le n \int_0^1   \bigl\lvert \functaa({\mu-\lambda}, H_{\mu-\lambda}(\eta))  - 
 \functaa({\mu-\theta}, H_{\mu-\theta}(\eta))   \bigr\rvert\,d\eta \nn\\
 &=n \int_0^1  \int_{\mu-\theta}^{\mu-\lambda}  \, 
 \biggl\lvert \frac{d}{d \rho}  \functaa(\rho, H_{\rho}(\eta)) \biggr\rvert  \,d\rho\,d\eta
\label{aux7.1}\end{align} 
 From \eqref{dH} and \eqref{psi},  
 \begin{align*} \frac{d}{d \rho}  \functaa(\rho, H_{\rho}(\eta)) 
 &=\frac{\partial\functaa}{\partial\rho} + \frac{\partial\functaa}{\partial x}  \frac{\partial H_{\rho}(\eta)}{\partial \rho}\\
 &=\Bigl( \, \frac{\partial\functaa(\rho, x)}{\partial\rho} + 
x \functaa(\rho, x) \frac{\partial\functaa(\rho, x)}{\partial x} \,\Bigr)\Big\vert_{x=H_{\rho}(\eta)}. 
 \end{align*}
Utilizing \eqref{psi2} and explicit computations leads to bounds 
\be
\Bigl\lvert\, \frac{\partial\functaa(\rho, x)}{\partial\rho} + 
x \functaa(\rho, x) \frac{\partial\functaa(\rho, x)}{\partial x} \,\Bigr\rvert \le \begin{cases} 
C(\rho)(1+(\log x)^2)  &\text{for $0<x\le 1$}\\
C(\rho)x^{1/2}   &\text{for $x\ge 1$.}
\end{cases}    \label{psi3}\ee 
With  $\rho$ restricted to  a compact subinterval of $(0,\infty)$, 
these bounds are valid for a fixed constant $C$.
Continue from \eqref{aux7.1}, letting $B_\rho$ denote a Gamma$(\rho, 1)$ random variable: 
\begin{align}
\text{line \eqref{aux6} } &\le  
n   \int_{\mu-\theta}^{\mu-\lambda} \int_0^1 \, 
 \biggl\lvert \frac{d}{d \rho}  \functaa(\rho, H_{\rho}(\eta)) \biggr\rvert \,d\eta \,d\rho\nn\\
&\le Cn  \int_{\mu-\theta}^{\mu-\lambda}  \E \bigl[  1 + (\log B_\rho)^2
 + B_\rho^{1/2} \bigr] \,d\rho \nn\\
&\le Cn(\theta-\lambda).\nn
\end{align}

To summarize, we have shown that line \eqref{aux5.98} $\le$ $Cn(\theta-\lambda)$
and thereby completed the proof of the lemma.   
\end{proof}

The preliminaries are ready and we turn to the upper bound.  
Let the scaling parameter $N\ge 1$ be real valued.  
We assume that the dimensions $(m,n)\in\bN^2$ of the  
 rectangle satisfy  
\be \abs{\, m-{N\trigamf(\mu-\theta)}\, }\le \mnc_N
\quad\text{and}\quad 
 \abs{\, n- {N\trigamf(\theta)}\,}
\le \mnc_N \label{mnN1}\ee
for a sequence   $\mnc_N\le CN^{2/3}$ with a fixed constant $C<\infty$.

For a walk $x_\centerdot$ such that $\xexit>0$, weights at distinct parameter
values are related by 
\begin{align*}  W(\theta)
=\prod_{i=1}^{\xexit} H_\theta(\eta_i)^{-1} \cdot\prod_{k=\xexit+1}^{m+n} Y_{x_k}
=W(\lambda)\cdot \prod_{i=1}^{\xexit} \frac{H_\lambda(\eta_i)}{H_\theta(\eta_i)}.
\end{align*}
For $\lambda<\theta$,  ${H_\lambda(\eta)}\le {H_\theta(\eta)}$ and consequently 
\be\begin{aligned}
Q^{\theta,\om}\{\xexit\ge u\} 
&= \frac1{Z(\theta)} \sum_{x_\centerdot}   
\ind\{\xexit\ge u\} W(\theta) 
\le   \frac{Z(\lambda)}{Z(\theta)}
 \cdot \prod_{i=1}^{\fl u} \frac{H_\lambda(\eta_i)}{H_\theta(\eta_i)}. 
\end{aligned} \label{aux8.00}\ee
We bound the $\P$-tail of $Q^\om\{\xexit\ge u\}$ separately for two ranges 
of a positive real $u$.
Let $c, \delta>0$ be  constants.  
Their values  will be determined  in the course of the proof. 
For future use of the estimates developed here it is to
be  noted that $c$ and $\delta$, and the other constants introduced in this upper bound 
proof,  
are functions of $(\mu,\theta)$ and nothing else, and furthermore, fixed values
of the constants work for $0<\theta<\mu$ in a compact set.

\medskip

{\bf Case 1.}  $(1\vee c\mnc_N) \le u\le\delta N$. 

\medskip

Pick an auxiliary parameter value 
\be  \lambda =\theta-\frac{b u}{N}. \label{defla}\ee
We can assume $b>0$ and $\delta>0$ small enough so that $b\delta<\theta/2$ 
and then $\lambda\in(\theta/2, \theta)$.  
Let \be \alpha=\exp[u(\digamf(\lambda)-\digamf(\theta))+\delta u^2/N].\label{alpha}\ee
Consider $0<s<\delta$.  
  First a split into
two probabilities. 
\begin{align}
&\P\bigl[ Q^\om\{\xexit\ge u\}  \ge e^{-su^2/N} \,\bigr]
\le \P\biggl\{ \; \prod_{i=1}^{\fl u} \frac{H_\lambda(\eta_i)}{H_\theta(\eta_i)} \ge \alpha \biggr\}  
\label{aux8.01}\\[2pt]
&\qquad\qquad   + \P\biggl( \;   \frac{Z(\lambda) }{Z(\theta) } 
  \ge  \alpha^{-1}e^{-su^2/N} \,\biggr).
\label{aux8.02}\end{align}
Recall that $\E(\log H_\theta(\eta))=\digamf(\theta)$ and that overline
denotes a centered random variable. Then
for the second probability on line \eqref{aux8.01}, 
\be\begin{aligned}
&\P\biggl\{ \; \prod_{i=1}^{\fl u} \frac{H_\lambda(\eta_i)}{H_\theta(\eta_i)} \ge \alpha \biggr\}  \\
&=\P\biggl\{ \; \sum_{i=1}^{\fl u} \bigl(\;\overline{\log H_\lambda(\eta_i)}
- \overline{\log H_\theta(\eta_i)} \; \bigr) 
\ge  (u-\fl u) (\digamf(\lambda)-\digamf(\theta))+  \delta u^2/N  \biggr\} \\
&\qquad \le  \frac{4N^2}{\delta^2 u^3} \Vvv\bigl[ {\log H_\lambda(\eta)}
-  {\log H_\theta(\eta)}\bigr]  \le  C \frac{N^2}{ u^3}. 
\end{aligned}\label{aux8.05}\ee
The extra term with the integer part correction goes away  because
\[  \digamf(\lambda)-\digamf(\theta)\ge -C(\theta)(\theta-\lambda)
= -C(\theta) \frac{bu}N \ge - \frac{\delta u^2}{2N}\,,   \]
$u\ge 1$, and we  can choose $b$ small enough.  

Rewrite the probability from line \eqref{aux8.02} as
\be\begin{aligned}
&\P\Bigl( \;  \overline{ \log Z(\lambda)}-\overline{\log Z(\theta)} \, 
  \ge  -\E\bigl[\log Z(\lambda)\bigr] %\\ &\qquad \qquad 
  +\E\bigl[\log Z(\theta) \bigr]  -\log \alpha  -su^2/N \,\Bigr).
\end{aligned}\label{aux8.06}\ee
Recall the mean from \eqref{ElogZ}. 
Rewrite the right-hand side of the inequality inside the probability above as follows: 
\begin{align}
& -\E\bigl[\log Z(\lambda)\bigr]  +\E\bigl[\log Z(\theta) \bigr]  -\log \alpha  -su^2/N\nn\\
&= \bigl( n\digamf(\mu-\lambda)+m\digamf(\lambda)\bigr) 
-\bigl( n\digamf(\mu-\theta)+m\digamf(\theta)\bigr) -\log \alpha  -su^2/N \nn\\
 &\ge \bigl(u-N\trigamf(\mu-\theta)\bigr)\bigl(\digamf(\theta)-\digamf(\lambda)\bigr) \nn\\
&\qquad\qquad -N\trigamf(\theta)\bigl(\digamf(\mu-\theta)- \digamf(\mu-\lambda)\bigr) 
 -(\delta+s)u^2/N \nn\\[3pt]
&\qquad\qquad - \mnc_N\abs{ \digamf(\lambda)-\digamf(\theta)}
 - \mnc_N\abs{ \digamf(\mu-\lambda)-\digamf(\mu-\theta)}   \nn\\[3pt]
&\ge u \trigamf(\theta)(\theta-\lambda)  
+\tfrac12{N} \bigl(  \trigamf(\mu-\theta)\trigamf'(\theta) +\trigamf(\theta)\trigamf'(\mu-\theta) \bigr) (\theta-\lambda)^2
\label{aux8.065}\\[2pt]
&\qquad\qquad-(\delta+s)u^2/N 
  -C_1(\theta,\mu) \bigl( u(\theta-\lambda)^2+N(\theta-\lambda)^3\bigr)\nn\\[2pt]
&\qquad\qquad- C_1(\theta,\mu)\mnc_N (\theta-\lambda) \nn\\
&\ge \bigl( b \trigamf(\theta) - C_2(\theta,\mu)b^2-2\delta -C_1(\theta,\mu)\delta(b^2+b^3)\bigr) \frac{u^2}{N}  -C_1(\theta,\mu) \mnc_N\frac{bu}N  
\label{aux8.066}\\
&\ge  \frac{c_1u^2}{N} .
\label{aux8.08}
\end{align}
Inequality \eqref{aux8.065}  with a  constant $C_1(\theta,\mu)>0$ came from the   expansions 
\begin{align*}
\digamf(\theta)-\digamf(\lambda) &= \trigamf(\theta)(\theta-\lambda)-\tfrac12 \trigamf'(\theta)(\theta-\lambda)^2
+\tfrac16 \trigamf''(\rho_0)(\theta-\lambda)^3
\intertext{and}
\digamf(\mu-\theta)- \digamf(\mu-\lambda) &= -\trigamf(\mu-\theta)(\theta-\lambda)-
\tfrac12 \trigamf'(\mu-\theta)(\theta-\lambda)^2
-\tfrac16\trigamf'' (\rho_1)(\theta-\lambda)^3, 
\end{align*}
for some $\rho_0, \rho_1\in(\lambda, \theta)$.  
For inequality \eqref{aux8.066} we defined 
\[   C_2(\theta,\mu) =  -\,\tfrac12 \bigl(  \trigamf(\mu-\theta)\trigamf'(\theta) +\trigamf(\theta)\trigamf'(\mu-\theta) \bigr)>0,  \] 
substituted in $\lambda=\theta-bu/N$ from \eqref{defla}, and recalled that $s<\delta$   and 
$u\le\delta N$.  To get \eqref{aux8.08} 
 we fixed $b>0$ small enough, then $\delta>0$
small enough,   defined a new constant $c_1>0$, and restricted 
$u$ to satisfy
\be
u\ge c\mnc_N \label{cond-u1}\ee
for another constant $c$.  We can also restrict to $u\ge 1$ if
the condition above does not enforce it. 
 %and  took $N$ large enough 
%so that $Cu^3/N^2\le \delta u^2/N$, 
%$c_1= b \trigamf(\theta) -3\delta \in (0,\infty)$.  

Substitute line \eqref{aux8.08} on the right-hand side inside probability  \eqref{aux8.06}.
This probability  came from line \eqref{aux8.02}.
Apply Chebyshev, then \eqref{varineq1}, and finally \eqref{var1}: 
\begin{align}
\text{line  \eqref{aux8.02}} 
&\le \P\Bigl( \;  \overline{ \log Z(\lambda)}-\overline{\log Z(\theta)} \, 
  \ge c_1u^2/N\Bigr) \\
&\le  \frac{CN^2}{u^4}\Vvv\bigl[ \log Z(\lambda)-\log Z(\theta)\bigr]\nn\\
&\le  \frac{CN^2}{u^4}\Bigl( \Vvv\bigl[ \log Z(\lambda)\bigr] +  \Vvv\bigl[\log Z(\theta)\bigr]\Bigr)\nn\\
&\le  \frac{CN^2}{u^4}\Bigl( \Vvv\bigl[ \log Z(\theta)\bigr] + N(\theta-\lambda)\Bigr)\nn\\
&\le  \frac{CN^2}{u^4}  \Eann    \biggl[ \; \sum_{i=1}^\xexit   \functaa(\theta,  Y_{i,0}^{-1})   \biggr]  + \frac{CN^2}{u^3}.  
\label{aux8.09}
\end{align}
Collecting \eqref{aux8.01}--\eqref{aux8.02}, \eqref{aux8.05}   and \eqref{aux8.09} gives
this intermediate result:   for $0<s<\delta$, $N\ge 1$, 
 and $1\vee c\mnc_N\le u\le \delta N$, 
\be
\P\bigl[ Q^\om\{\xexit\ge u\}  \ge e^{-su^2/N} \,\bigr] \le  \frac{CN^2}{u^4}  \Eann   \biggl[ \; \sum_{i=1}^\xexit   \functaa(\theta,  Y_{i,0}^{-1})   \biggr]  + \frac{CN^2}{u^3}.  
\label{aux8.11}\ee

\begin{lemma}  There exists a constant $0<C<\infty$ such that 
\be  \Eann   \biggl[ \; \sum_{i=1}^\xexit   \functaa(\theta,  Y_{i,0}^{-1})   \biggr] 
\le C  \bigl( \Eann (\xexit)  +1\bigr). \label{aux8.13}\ee
\end{lemma}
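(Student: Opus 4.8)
The plan is to reduce the statement to an index‑by‑index estimate and then remove the boundary weight $Y_{i,0}$ from the exit‑point event by conditioning.

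Writing $\Eann=\E\,E^{Q^\om}$ and noting that $\{\xexit\ge i\}$ is the event that the path passes through $(i,0)$,
\[
\Eann\Bigl[\,\sum_{i=1}^\xexit\functaa(\theta,Y_{i,0}^{-1})\,\Bigr]
 =\sum_{i=1}^m\E\bigl[\functaa(\theta,Y_{i,0}^{-1})\,Q^\om\{\xexit\ge i\}\bigr],
 \qquad
 \Eann(\xexit)=\sum_{i=1}^m\Pann\{\xexit\ge i\},
\]
so it suffices to bound each $\E[\functaa(\theta,Y_{i,0}^{-1})\,Q^\om\{\xexit\ge i\}]$ by a multiple of $\Pann\{\xexit\ge i\}$ plus a remainder that is summable in $i$. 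Split $\functaa(\theta,Y_{i,0}^{-1})$ according to $Y_{i,0}\le1$ or $Y_{i,0}>1$. On $\{Y_{i,0}\le1\}$ we have $Y_{i,0}^{-1}\ge1$, so the $x\ge1$ branch of \eqref{psi2} gives $\functaa(\theta,Y_{i,0}^{-1})\le C$, and this part contributes at most $C\sum_i\Pann\{\xexit\ge i\}=C\,\Eann(\xexit)$.

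For the part on $\{Y_{i,0}>1\}$ I would use the decoupling coming from the fact that a path with $\xexit\ge i$ uses the whole segment $(1,0),\dots,(i,0)$. Letting $Z^{<i}$ be the contribution of paths that leave the $x$‑axis strictly before column $i$, we have $Z_{m,n}=Z^{<i}+\bigl(\prod_{j=1}^iY_{j,0}\bigr)Z_{(i,0),(m,n)}$, and neither $Z^{<i}$ nor $Z_{(i,0),(m,n)}$ involves $Y_{i,0}$. Hence, conditionally on the weights other than $Y_{i,0}$,
\[
Q^\om\{\xexit\ge i\}=\frac{Y_{i,0}}{Y_{i,0}+a_i},\qquad
a_i:=\frac{Z^{<i}}{\bigl(\prod_{j=1}^{i-1}Y_{j,0}\bigr)Z_{(i,0),(m,n)}}\ \ge0,
\]
with $a_i$ independent of $Y_{i,0}$. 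When $\theta>1$ the weight $Y_{i,0}$ has finite mean, and a short computation using $\functaa(\theta,Y_{i,0}^{-1})\ind\{Y_{i,0}>1\}\le C(1+\log Y_{i,0})$ (from \eqref{psi2}), the tail bound $\P(Y_{i,0}>t)\le Ct^{-\theta}$, and the moment bound \eqref{psimom}, shows that the ratio $a\mapsto\E\bigl[\functaa(\theta,Y_{i,0}^{-1})\ind\{Y_{i,0}>1\}\tfrac{Y_{i,0}}{Y_{i,0}+a}\bigr]\big/\E\bigl[\tfrac{Y_{i,0}}{Y_{i,0}+a}\bigr]$ is continuous on $[0,\infty]$ with finite limits at $0$ and $\infty$, hence bounded; integrating over $a_i$ gives $\E[\functaa(\theta,Y_{i,0}^{-1})\ind\{Y_{i,0}>1\}\,Q^\om\{\xexit\ge i\}]\le C\,\Pann\{\xexit\ge i\}$, and summing finishes the case $\theta>1$ (the ``$+1$'' is not even needed there).

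The main obstacle is the range $0<\theta\le1$, where the gamma weights are heavy‑tailed with $\E Y_{i,0}=\infty$: conditionally on a large value of $a_i$ the event $\{\xexit\ge i\}$ is strongly biased toward an atypically large $Y_{i,0}$, and one only obtains $\E[\functaa(\theta,Y_{i,0}^{-1})\ind\{Y_{i,0}>1\}\,Q^\om\{\xexit\ge i\}\mid a_i]\le C(1+\log^+a_i)(1\wedge a_i^{-\theta})$, whereas $\E[Q^\om\{\xexit\ge i\}\mid a_i]\ge c(1\wedge a_i^{-\theta})$ --- a genuine extra logarithmic factor, which therefore cannot be removed term by term. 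I would absorb it globally: interpolating the two conditional bounds gives $\E[\,\cdot\mid a_i]\le C_\e(\E[Q^\om\{\xexit\ge i\}\mid a_i])^{1-\e}$ for every $\e\in(0,1)$, whence, by Jensen applied to the concave map $p\mapsto p^{1-\e}$, $\E[\functaa(\theta,Y_{i,0}^{-1})\ind\{Y_{i,0}>1\}\,Q^\om\{\xexit\ge i\}]\le C_\e\,\Pann\{\xexit\ge i\}^{1-\e}$; one then sums $\sum_i\Pann\{\xexit\ge i\}^{1-\e}$ with the help of a crude a priori decay estimate for $\Pann\{\xexit\ge i\}$ --- obtainable from $\Eann(\xexit)\le m$ together with the monotonicity (Lemma \ref{monolm}) and change‑of‑measure comparisons already used in this section --- which turns it into $C(1+\Eann(\xexit))$ once $\e$ is taken small. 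Quantifying exactly how much a priori tail decay is needed and checking that it is cheaply available is the delicate point of the argument.
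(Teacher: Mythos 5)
Your reduction to per-index terms and the identity $Q^\om\{\xexit\ge i\}=Y_{i,0}/(Y_{i,0}+a_i)$ with $a_i$ independent of $Y_{i,0}$ are correct, and you rightly diagnose that for $\theta\le 1$ the term-by-term bound picks up a logarithmic loss. But the way you absorb that loss has a genuine gap: after Jensen you are left with $\sum_{i=1}^m \Pann\{\xexit\ge i\}^{1-\e}$, and the claim that this is $\le C(1+\Eann(\xexit))$ for small $\e$ is not justified and is in general false with the information available here. Raising the summands to the power $1-\e$ only helps if $\Pann\{\xexit\ge i\}$ decays polynomially with exponent strictly larger than $1$; the ``crude a priori'' input you invoke (Markov from $\Eann(\xexit)$, monotonicity, change of measure) gives at best decay of exponent $1$, and then $\sum_i\Pann\{\xexit\ge i\}^{1-\e}\approx(\Eann\xexit)^{1-\e}m^{\e}/\e$, which exceeds $\Eann(\xexit)$ by a power of $N$ (for instance $\Pann\{\xexit\ge i\}\asymp\min(1,L/i)$ makes the stated inequality fail). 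Worse, the genuinely useful decay ($u^{-3}$ at scale $N^{2/3}$, Corollary \ref{coraux3}) is exactly what the Section \ref{sec:ub-bd} bootstrap produces \emph{using} this lemma through \eqref{aux8.15}, so leaning on it here would be circular; the lemma must hold with a constant depending only on $(\theta,\mu)$, with no a priori knowledge of the law of $\xexit$.

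For comparison, the paper's proof avoids conditioning on the environment altogether: write $\functaa_i=\E\functaa_1+\functaabar_i$, so the mean part contributes exactly $\E(\functaa_1)\Eann(\xexit)$, and for the centered part decompose over $\{\xexit=k\}$ and bound $Q^\om\{\xexit=k\}S_k$ by $k\,Q^\om\{\xexit=k\}$ unless $S_k\ge k$, an event whose contribution $\E[\ind\{S_k\ge k\}S_k]\le Ck^{-3/2}$ is summable by Cauchy--Schwarz and an eighth-moment bound (using \eqref{psimom} and that the $\functaabar_i$ are i.i.d.\ functions of the boundary weights alone). That argument is uniform in $m,n$ and in the distribution of $\xexit$, which is what the bootstrap requires. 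If you want to salvage your route, you would need to replace the $(1-\e)$-power loss by something summable per index (e.g.\ exploiting that the logarithmic overshoot only matters on $\{a_i\ \text{large}\}$ together with a bound on $\P(a_i\ \text{small})$), but as written the final summation step does not go through.
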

\begin{proof}  Write again $A_i= Y_{i,0}^{-1}$ for the Gamma$(\theta, 1)$ 
variables.  Abbreviate $\functaa_i=\functaa(\theta, A_i)$,  
$\functaabar_i= \functaa_i-\E\functaa_i$ and $S_k= \sum_{i=1}^k    \functaabar_i $.    
\begin{align*}
\Eann   \biggl[ \, \sum_{i=1}^\xexit   \functaa_i   \biggr] 
&= \E(\functaa_1) \Eann(\xexit) + \Eann   \biggl[ \, \sum_{i=1}^\xexit   \functaabar_i  \biggr]
=  \E(\functaa_1) \Eann(\xexit) +
\sum_{k=1}^m \E  \bigl[ \, Q^\om\{\xexit=k\} S_k \bigr] \\
&\le \bigl( \E(\functaa_1)+1\bigr)   \Eann (\xexit) + \sum_{k=1}^m \E  \bigl[   
\ind\bigl\{ S_k \ge k\bigr\}  S_k \, \bigr] 
\le C\Eann (\xexit) + C. 
\end{align*}
The last bound comes from the fact that $\{\functaabar_i\}$ are i.i.d.\ mean zero with
  all moments 
(recall \eqref{psimom}): 
\begin{align*} 
\E  \bigl[    \ind\bigl\{ S_k \ge k\bigr\}  S_k \, \bigr] &\le 
\bigl(k \E(\functaabar^2)\bigr)^{1/2} \bigl(\P\{ S_k \ge k\}\bigr)^{1/2} \\
&\le Ck^{1/2} \Bigl( k^{-8}  E(S_k^8)\Bigr)^{1/2}  \le Ck^{-3/2} 
\end{align*} 
and these are summable.  
\end{proof}

Since $u\ge 1$, we can combine \eqref{aux8.11} and \eqref{aux8.13} to give 
\be
\P\bigl[ Q^\om\{\xexit\ge u\}  \ge e^{-su^2/N} \,\bigr] \le\frac{CN^2}{u^4}  \Eann(\xexit) + \frac{CN^2} {u^3} 
\label{aux8.15}\ee
still for $0<s<\delta$ and $(1\vee c\mnc_N)\le u\le \delta N$.  

\medskip

{\bf Case 2.}  $(1\vee c\mnc_N\vee\delta N) \le u <\infty$. 

\medskip

The constant $\delta>0$ is now fixed small enough by Case 1.    Take new constants
$\nu>0$ and $\delta_1>0$ and set
 \[  \lambda=\theta-\nu  \]
and 
  \be \alpha=\exp[u(\digamf(\lambda)-\digamf(\theta))+\delta_1 u].\label{alpha2}\ee
Consider $0<s<\delta_1$.  First use again \eqref{aux8.00} to split the probability:
\begin{align}
&\P\bigl[ Q^\om\{\xexit\ge u\}  \ge e^{-su} \,\bigr]
\le \P\biggl\{ \; \prod_{i=1}^{\fl u} \frac{H_\lambda(\eta_i)}{H_\theta(\eta_i)} \ge \alpha \biggr\}  
%\label{aux8.01}\\[2pt] &\qquad\qquad   
+ \P\biggl( \;   \frac{Z(\lambda) }{Z(\theta) } 
  \ge  \alpha^{-1}e^{-su} \,\biggr) \nn\\
  &\qquad \le \; 
   \P\biggl\{ \; \sum_{i=1}^{\fl u} \bigl(\;\overline{\log H_\lambda(\eta_i)}
- \overline{\log H_\theta(\eta_i)} \; \bigr) \ge  \tfrac12\delta_1 u   \biggr\}   %\label{aux8.021}
\nn\\
&\qquad +     \P\Bigl( \;  \overline{ \log Z(\lambda)}-\overline{\log Z(\theta)} \, 
  \ge  -\E\bigl[\log Z(\lambda)\bigr] %\\ &\qquad \qquad 
  +\E\bigl[\log Z(\theta) \bigr]  -\log \alpha  -su \,\Bigr).
\label{aux8.022}\end{align}
 
Logarithms of   gamma variables have an exponential moment:
\[   \E[e^{t\abs{\log H_\theta(\eta)}}]<\infty  
\quad \text{ if $t<\theta$.} \]    
Hence  
  standard large deviations apply,  and  for some constant $c_4>0$,
\be\begin{aligned}
 \P\biggl\{ \; \sum_{i=1}^{\fl u} \bigl(\;\overline{\log H_\lambda(\eta_i)}
- \overline{\log H_\theta(\eta_i)} \; \bigr) \ge  \tfrac12\delta_1 u   \biggr\} 
\le e^{-c_4 u}. 
\end{aligned}\label{aux8.055}\ee

Following the pattern that led to \eqref{aux8.08}, 
the right-hand side inside probability  \eqref{aux8.022} is bounded as follows: 
   \begin{align}
& -\E\bigl[\log Z(\lambda)\bigr]  +\E\bigl[\log Z(\theta) \bigr]  -\log \alpha  -su\nn\\
&\ge u \trigamf(\theta)(\theta-\lambda)  -NC_2(\theta)(\theta-\lambda)^2
 -(\delta_1+s)u
  -C_1(\theta) \bigl( u(\theta-\lambda)^2+N(\theta-\lambda)^3\bigr)\nn\\[2pt]
&\qquad\qquad- C_1(\theta)\mnc_N (\theta-\lambda) \nn\\
 &\ge u\Bigl[   \trigamf(\theta)\nu - \frac{C_2(\theta)\nu^2}{\delta}  -2\delta_1 -C_1(\theta)(\nu^2
 +\nu^3/\delta)  \Bigr] - C_1(\theta)\mnc_N\nu  \nn\\
 &\ge c_5 u  \nn
\end{align}  
for a constant $c_5>0$, when we fix $\nu$ and $\delta_1$ small enough
and again also enforce \eqref{cond-u1} 
 $u\ge c\mnc_N$ for a large enough $c$. 
By standard large deviations,  since $\log Z(\lambda)$ and $\log Z(\theta)$ can be
expressed as   sums of i.i.d.\ random variables with an exponential moment,  
and for $u\ge \delta N$, 
\be \begin{aligned}
\text{probability  \eqref{aux8.022}} \le 
 \P\Bigl( \;  \overline{ \log Z(\lambda)}-\overline{\log Z(\theta)} \, 
  \ge  c_5u\Bigr)  \le  e^{-c_6u}.  
  \end{aligned} \label{aux8.057}\ee
Combining \eqref{aux8.055} and \eqref{aux8.057} gives the bound 
 \be   \P\bigl[ Q^\om\{\xexit\ge u\}  \ge e^{-su} \,\bigr] \le  2e^{-c_7u}  
 \label{case2}\ee
 for $0<s<\delta_1$ and $u\ge \delta N$.    Integrate and use \eqref{case2}: 
\be\begin{aligned} 
&\int_{\delta N}^\infty \Pann(\xexit\ge u) \,du 
=  \int_{\delta N }^\infty  du \int_0^1 dt\, \P\bigl[  Q^\om(\xexit\ge u)\ge t \bigr] \\
&= \int_{\delta N }^\infty  du \int_0^\infty ds\,  ue^{-su}\,\P\bigl[  Q^\om(\xexit\ge u)\ge e^{-su} \bigr]\\
&\le   2c_7^{-1} e^{-c_7\delta N}  + \delta_1^{-1} e^{-\delta_1\delta N} \le  C. 
\end{aligned}\label{case2a}\ee
  
\medskip

Now we combine the two cases to finish the proof of the upper bound.  
Let $r\ge 1$ be large enough so that $c\mnc_N\le rN^{2/3}$ for all $N$
for the constant $c$ that appeared in \eqref{cond-u1}. 
\begin{align*}
\Eann(\xexit) &\le rN^{2/3} +\int_{rN^{2/3}}^{\delta N} \Pann(\xexit\ge u) \,du 
+\int_{\delta N}^\infty \Pann(\xexit\ge u) \,du \\
%&=  rN^{2/3} +\int_{rN^{2/3}}^{\delta N} \E\bigl[ Q^\om(\xexit\ge u)\bigr] \,du + C  \\
&\le C+rN^{2/3} +\int_{rN^{2/3}}^{\delta N} du \int_0^1  \P\bigl[  Q^\om(\xexit\ge u)\ge t \bigr] \,dt \\
%&\le C+ rN^{2/3} +\int_{rN^{2/3}}^{\delta N} du \int_{\exp(-\delta u^2/N)}^1  \P\bigl[  Q^\om(\xexit\ge u)\ge t \bigr] \,dt \\
&\le C+ rN^{2/3} +\int_{rN^{2/3}}^{\delta N} du \int_0^\delta 
\P\bigl[ Q^\om\{\xexit\ge u\}  \ge e^{-su^2/N} \,\bigr] \frac{u^2}N e^{-su^2/N}  
  \,ds \\
  \intertext{[substitute in \eqref{aux8.15} and integrate away the $s$-variable]}
&\le C+ rN^{2/3} +C\int_{rN^{2/3}}^\infty  \Bigl(\, \frac{N^2}{u^4}  \Eann(\xexit) 
+ \frac{N^2} {u^3} \, \Bigr) \,du \\  
&= C+ rN^{2/3} +\frac{C}{3r^3} \Eann(\xexit)  + \frac{CN^{2/3}}{2r^2}.
\end{align*}
If $r$ is fixed large enough relative to $C$, we obtain, with a new constant $C$  
\be    \Eann(\xexit)  \le  CN^{2/3}.   \label{goal8}\ee
This is valid   for all $N\ge 1$.  The constant $C$ depends on 
$(\mu, \theta)$ and the
other constants $\delta, \delta_1, b$ introduced along the way.  A single constant
works for $0<\theta<\mu$  that vary in  a compact set. 
 
Combining \eqref{var1},  \eqref{aux8.13}  and \eqref{goal8}  gives the upper variance bound
for the free energy:
\be  \Vvv[\log Z_{m,n}] \le CN^{2/3}.   \label{goal8.1}\ee

 Combining \eqref{aux8.15} and \eqref{case2} with  \eqref{goal8} gives this lemma:
 
 \begin{lemma} Assume weight distributions \eqref{distr4}
and rectangle dimensions \eqref{mnN1}.
  Then there are finite positive constants
 $\delta, \delta_1, c, c_1$ and $C$ such that 
 for $N\ge 1$ and  $(1\vee c\mnc_N)\le u\le \delta N$, 
 \be
\P\bigl[ Q^\om\{\xexit\ge u\}  \ge e^{-\delta u^2/N} \,\bigr] \le C\biggl( \frac{N^{8/3}}{u^4}    + 
\frac{N^2} {u^3} \biggr)
\label{aux8.6}\ee
while for  $N\ge 1$ and   $u\ge (1\vee c\mnc_N\vee \delta N)$, 
 \be   \P\bigl[ Q^\om\{\xexit\ge u\}  \ge e^{-\delta_1 u} \,\bigr]
 \le  e^{-c_1u}.  
 \label{case2c}\ee
Same bounds hold for $\yexit.$  The same constants work for 
$0<\theta<\mu$  that vary in  a compact set. 
\label{auxlm7}\end{lemma}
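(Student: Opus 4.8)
The plan is to obtain this lemma with essentially no new work, by combining three estimates that are already in hand: the Case 1 bound \eqref{aux8.15}, the Case 2 bound \eqref{case2}, and the a priori estimate \eqref{goal8} on $\Eann(\xexit)$. The only moves required are one substitution and a cosmetic relabeling of constants, so the proof is pure bookkeeping once the preceding analysis is granted.

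Concretely, for the first displayed bound I would start from \eqref{aux8.15}, which holds for $0<s<\delta$ and $(1\vee c\mnc_N)\le u\le\delta N$, and insert $\Eann(\xexit)\le CN^{2/3}$ from \eqref{goal8}; this collapses the right-hand side to $CN^{8/3}u^{-4}+CN^2u^{-3}$. Since the right-hand side no longer depends on $s$, I fix a single admissible value, say $s=\delta/2$, and rename $\delta/2$ as the constant "$\delta$" of the lemma; the range $u\le(\delta/2)N$ then sits inside the range $u\le\delta N$ where \eqref{aux8.15} was proved, and the lower cutoff is unchanged, so the first bound follows verbatim. For the second displayed bound I would start from \eqref{case2}, valid for $0<s<\delta_1$ and $u\ge\delta N$, fix an admissible value of $s$ and again rename it "$\delta_1$", and absorb the harmless prefactor $2$ into the exponential rate by passing to a slightly smaller $c_1<c_7$ (using $u\ge1$, one has $2e^{-c_7u}=e^{\log 2-c_7u}\le e^{-(c_7-\log 2)u}$). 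The domain $u\ge(1\vee c\mnc_N\vee\delta N)$ appearing in the lemma is just the intersection of the Case 2 range $u\ge\delta N$ with the standing restriction \eqref{cond-u1} that $u\ge c\mnc_N$ and the innocuous $u\ge1$.

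Finally, the statements for $\yexit$ follow by reflecting the entire configuration across the main diagonal: this interchanges the $x$- and $y$-axes, replaces $\theta$ by $\mu-\theta$ (still in $(0,\mu)$), swaps the two coordinates of the characteristic direction so that condition \eqref{mnN1} is preserved, and turns $\yexit$ into the horizontal exit point of the reflected model — precisely the symmetry already used to pass from \eqref{var1} to \eqref{var1.1}. That all of $\delta,\delta_1,c,c_1,C$ can be chosen uniformly over compact subsets of $\{0<\theta<\mu\}$ is inherited from the corresponding remarks made as each of these constants was introduced in Cases 1--2 and in the derivation of \eqref{goal8}. I do not expect any genuine obstacle here: the only non-routine ingredient in this whole circle of estimates is the self-improving bound \eqref{goal8}, where integrating the Case 1 estimate over $u$ reproduces a term proportional to $r^{-3}\Eann(\xexit)$ on the right that is then swallowed by the left-hand side upon choosing $r$ large, and that step is already behind us.
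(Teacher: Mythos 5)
Your proposal is correct and takes essentially the same route as the paper, whose proof of Lemma \ref{auxlm7} consists exactly of inserting \eqref{goal8} into \eqref{aux8.15}, quoting \eqref{case2}, and using the reflection across the diagonal (with $\theta\leftrightarrow\mu-\theta$) for $\yexit$. One small bookkeeping caution: if you shrink $\delta$ to $\delta/2$ for \eqref{aux8.6}, do not also use the shrunk value in the domain of \eqref{case2c}, since \eqref{case2} was only proved for $u\ge\delta N$ with the original $\delta$; it is cleaner to keep the original $\delta$ throughout, noting that the right side of \eqref{aux8.15} is independent of $s$ so the bound extends to the threshold $e^{-\delta u^2/N}$ (the derivation in fact tolerates $s=\delta$), and similarly for $\delta_1$ in \eqref{case2c}.
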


Integration gives these annealed bounds:

\begin{corollary} There are constants $0<\delta,  c, c_1, C <\infty$ 
such that 
for $N\ge 1$, 
  \be
\Pann\{\xexit\ge u\}   \le 
\begin{cases} C\Bigl( \frac{N^{8/3}}{u^4}    +  \frac{N^2} {u^3} \Bigr),
  &(1\vee c\mnc_N)\le u\le \delta N\\[5pt]
  2e^{-c_1u},  &u\ge (1\vee c\mnc_N\vee \delta N).  \end{cases}
  \label{upaux13}\ee
Same bounds hold for $\yexit.$
\label{coraux3}  \end{corollary}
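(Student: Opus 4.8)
The plan is to pass from the quenched tail bounds of Lemma \ref{auxlm7} to the annealed bound \eqref{upaux13} by the layer-cake (distribution-function) identity, just as in the computation of $\Eann(\xexit)$ carried out immediately before the corollary. Since $Q^\om\{\xexit\ge u\}$ takes values in $[0,1]$,
\[
  \Pann\{\xexit\ge u\}=\E\bigl[Q^\om\{\xexit\ge u\}\bigr]
  =\int_0^1 \P\bigl[Q^\om\{\xexit\ge u\}\ge t\bigr]\,dt .
\]
I would then split this integral at the threshold $t=e^{-\delta u^2/N}$ in Case~1 (the range $(1\vee c\mnc_N)\le u\le\delta N$) and at $t=e^{-\delta_1 u}$ in Case~2 (the range $u\ge 1\vee c\mnc_N\vee\delta N$), with $\delta,\delta_1,c$ the constants supplied by Lemma \ref{auxlm7}. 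Below the threshold the integrand is bounded trivially by $1$; above the threshold one uses the monotonicity of $t\mapsto\P[\,\cdot\ge t]$ together with \eqref{aux8.6} or \eqref{case2c}. This produces, in Case~1,
\[
  \Pann\{\xexit\ge u\}\le e^{-\delta u^2/N}+C\Bigl(\tfrac{N^{8/3}}{u^4}+\tfrac{N^2}{u^3}\Bigr),
\]
and in Case~2, $\Pann\{\xexit\ge u\}\le e^{-\delta_1 u}+e^{-c_1 u}$.

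It remains to absorb the threshold term into the terms already on the right. In Case~1, writing $v=u^2/N$ gives $u^4 e^{-\delta u^2/N}=v^2 N^2 e^{-\delta v}$, and since $\sup_{v\ge 0}v^2 e^{-\delta v}<\infty$ we get $e^{-\delta u^2/N}\le C(\delta)N^2/u^4\le C(\delta)N^{8/3}/u^4$ for all $N\ge1$; absorbing this into the constant yields the first line of \eqref{upaux13}. In Case~2 one simply bounds $e^{-\delta_1 u}+e^{-c_1 u}\le 2e^{-(\delta_1\wedge c_1)u}$ and renames $c_1$, giving the second line. The same argument applies verbatim with $\yexit$ replacing $\xexit$, since Lemma \ref{auxlm7} provides the identical tail bounds for $\yexit$; and the claim that a single constant serves for $(\theta,\mu)$ in a compact set is inherited directly from the corresponding uniformity in Lemma \ref{auxlm7}.

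There is no genuine obstacle in this argument — it is an elementary integration step. The one place deserving a line of care is checking that, on the stated ranges of $u$, the probability $e^{-\delta u^2/N}$ lost at the threshold is actually dominated by the polynomial terms $N^{8/3}/u^4$ and $N^2/u^3$; this is exactly the bound $\sup_{v\ge0}v^k e^{-\delta v}<\infty$ after the substitution $v=u^2/N$, and it is why the upper cutoff $u\le\delta N$ in Case~1 costs nothing. (One should also note that $\{\xexit\ge u\}=\{\xexit\ge\ce{u}\}$, so the real-valued and integer-valued versions of these events coincide, but this affects nothing.)
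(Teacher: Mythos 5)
Your argument is correct and is exactly the integration the paper has in mind: the corollary follows from Lemma \ref{auxlm7} via the layer-cake identity $\Pann\{\xexit\ge u\}=\int_0^1\P[Q^\om\{\xexit\ge u\}\ge t]\,dt$, split at the thresholds $e^{-\delta u^2/N}$ and $e^{-\delta_1 u}$, with the threshold term absorbed into $CN^{8/3}/u^4$ (resp.\ $2e^{-c_1u}$) just as you do. This matches the paper's one-line proof (``Integration gives these annealed bounds''), whose intended computation is the same layer-cake argument already displayed in \eqref{case2a} and in the estimate of $\Eann(\xexit)$.
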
 

From the upper variance bound \eqref{goal8.1} and Theorem \ref{burkethm} 
we can easily 
deduce the central limit theorem for off-characteristic rectangles. 

\begin{proof}[Proof of Corollary \ref{clt-cor}]
Set $m_1=\fl{\trigamf(\mu-\theta)N}$.  
 Recall that overline means centering at the mean.
Since $Z_{m,n}=Z_{m_1,n}\cdot\prod_{i=m_1+1}^{m} U_{i,n}$, 
 \begin{align*}
 N^{-\alpha/2}\,  \overline{\log Z_{m,n}}
=  N^{-\alpha/2}\, \overline{\log Z_{m_1,n}}+ N^{-\alpha/2}\sum_{i=m_1+1}^{m} \overline{\log U_{i,n}}.
\end{align*}
Since $(m_1,n)$ is of characteristic shape,  \eqref{goal8.1} implies that  the first term
on the right is stochastically 
$O(N^{1/3-\alpha/2})$. Since $\alpha>2/3$ this term  converges to zero  in probability. 
 The second term is a sum of approximately $c_1N^\alpha$  i.i.d.\ terms
and hence satisfies a CLT.  
%We omit the remaining details.  
\end{proof}

\section{Lower bound for the model with boundaries}
\label{sec:lb-bd}

In this section we finish the proof of Theorem \ref{var-bd-thm} by providing the
lower bound.  
For subsets $A\subseteq\Pi_{(i,j),(k,\ell)}$ of paths, 
let us introduce the notation 
\be 
Z_{(i,j),(k,\ell)}(A)=\sum_{x_\centerdot \in A} \prod_{r=1}^{k-i+\ell-j} Y_{x_r} 
\label{Z6}\ee
for a restricted partition function. Then the quenched polymer probability
can be  written $Q_{m,n}(A)= Z_{m,n}(A)/Z_{m,n}$. 

\begin{lemma}  For $m\ge 2$ and $n\ge 1$ 
we have this comparison of partition
functions:
\be \frac{Z_{m,n}(\yexit>0)}{Z_{m-1,n}(\yexit>0)} \le 
\frac{Z_{(1,1), (m,n)}}{Z_{(1,1),(m-1,n)}}
\le \frac{Z_{m,n}(\xexit>0)}{Z_{m-1,n}(\xexit>0)}. \label{Zcomp2}\ee
\end{lemma}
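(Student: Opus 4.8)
The plan is to expand each of the three partition functions according to where the path makes its first step or, more precisely, according to its exit behavior from the axes, and then compare term by term. Consider the rightmost inequality first. A path $x_\centerdot\in\Pi_{m,n}$ with $\xexit>0$ starts with a horizontal step along the $x$-axis, stays on the axis up to some point $(k,0)$ with $k=\xexit\ge 1$, then makes a vertical step to $(k,1)$ and from there proceeds as an up-right path inside the bulk rectangle $\Lambda_{(k,1),(m,n)}$. Writing out $Z_{m,n}(\xexit>0)$ this way gives
\[
Z_{m,n}(\xexit>0)=\sum_{k=1}^{m}\Bigl(\prod_{i=1}^{k}U_{i,0}\Bigr)\,Y_{k,1}\,Z_{(k,1),(m,n)},
\]
and similarly $Z_{m-1,n}(\xexit>0)=\sum_{k=1}^{m-1}\bigl(\prod_{i=1}^{k}U_{i,0}\bigr)Y_{k,1}Z_{(k,1),(m-1,n)}$. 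The claim $Z_{m,n}(\xexit>0)/Z_{m-1,n}(\xexit>0)\ge Z_{(1,1),(m,n)}/Z_{(1,1),(m-1,n)}$ is then of the form $\sum_k a_k b_k / \sum_k a_k \ge b_1$-type weighted-average statement, where $a_k=\bigl(\prod_{i=1}^{k}U_{i,0}\bigr)Y_{k,1}Z_{(k,1),(m-1,n)}\ge 0$ and $b_k=Z_{(k,1),(m,n)}/Z_{(k,1),(m-1,n)}$; so it suffices to show $b_k\ge b_1$ for every $k$, i.e. that the ratio $Z_{(k,1),(m,n)}/Z_{(k,1),(m-1,n)}$ is nondecreasing in the left coordinate $k$. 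The leftmost inequality is handled symmetrically, expanding according to $\yexit$ and using that the vertical boundary weights factor out, reducing to showing $Z_{(1,\ell),(m,n)}/Z_{(1,\ell),(m-1,n)}$ is nonincreasing in $\ell$ (equivalently, nondecreasing after reflecting roles).

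The core monotonicity statement I would isolate is: for a fixed environment, the ratio $U_{m,n}=Z_{(k,\ell),(m,n)}/Z_{(k,\ell),(m-1,n)}$, viewed as a function of the southwest corner $(k,\ell)$, is nondecreasing in $k$ and nonincreasing in $\ell$. This should follow from Lemma \ref{monolm}: moving the southwest corner from $(k,\ell)$ to $(k+1,\ell)$ amounts to deleting a column of the underlying environment on the left, which can be encoded as replacing the effective left-boundary ratios by larger ones (and the bottom-boundary ratios by smaller ones) in the recursion \eqref{UVY1}, and Lemma \ref{monolm} then propagates this as an increase of all the $U$-variables and a decrease of all the $V$-variables. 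Concretely, the $U$-variable at the northeast corner of the smaller rectangle dominates the corresponding $U$-variable of the larger one. One can also verify the base case and inductive step directly from the recursion $Z_{(k,\ell),(m,n)}=Y_{m,n}(Z_{(k,\ell),(m-1,n)}+Z_{(k,\ell),(m,n-1)})$, which gives $U_{m,n}=Y_{m,n}(1+U_{m,n-1}/V_{m-1,n})$, so monotonicity of $U$ in the corner follows once one has joint monotonicity of the pair $(U,V)$ in the corner — exactly the content of Lemma \ref{monolm} applied with the roles of the two families of boundary data exchanged.

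The main obstacle, and the step deserving the most care, is setting up the correct comparison of initial data so that Lemma \ref{monolm} applies with the inequalities pointing the right way: one must check that shifting the corner one unit to the right genuinely corresponds to enlarging the $U$-type boundary inputs and shrinking the $V$-type ones (not the reverse), since the monotonicity in Lemma \ref{monolm} is directional. I would handle this by treating the deleted column's contribution as part of the new bottom/left boundary for the truncated rectangle and checking the sign of the resulting perturbation against the hypotheses $U_{i,0}\ge\widetilde U_{i,0}$, $V_{0,j}\le\widetilde V_{0,j}$. Once the monotonicity of $b_k$ in $k$ (and its counterpart for the $\yexit$-expansion) is in hand, both inequalities in \eqref{Zcomp2} reduce to the elementary fact that a weighted average of the $b_k$ with nonnegative weights lies between $b_1$ and $b_{\max}$, and the proof concludes.
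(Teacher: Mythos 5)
Your plan is correct and can be completed, but it is a more roundabout route than the paper's, even though both arguments ultimately rest on Lemma \ref{monolm}. The paper's proof observes that $Z_{\cdot,\cdot}(\yexit>0)$ is itself a partition function of the type governed by the recursion \eqref{UVY1} on the quadrant with origin moved to $(1,1)$: its effective south-boundary ratios are exactly the bulk weights $Y_{i,1}$, while its effective west-boundary ratios $V_{1,j}$ dominate the bulk weights $Y_{1,j}$. A single application of Lemma \ref{monolm} against the pure-bulk system $Z_{(1,1),\cdot}$ then yields simultaneously the first inequality of \eqref{Zcomp2} (from the $U$-comparison) and, from the accompanying $V$-comparison followed by transposition, the second; no exit-point decomposition and no separate treatment of the $\xexit$-system are needed. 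You instead decompose by the exit point, reduce to a weighted-average statement, and then need the more general fact that $Z_{(k,\ell),(m,n)}/Z_{(k,\ell),(m-1,n)}$ is nondecreasing in $k$ and nonincreasing in $\ell$. That fact is true, and its proof is precisely the same one-step boundary comparison the paper makes: restrict the corner-$(k,\ell)$ system to the quadrant with corner $(k+1,\ell)$; its south ratios along the bottom row are still the raw weights $Y_{i,\ell}$, while its west ratios satisfy $Z_{(k,\ell),(k+1,j)}/Z_{(k,\ell),(k+1,j-1)}\ge Y_{k+1,j}$ by the recursion, so Lemma \ref{monolm} forces all $U$-variables to increase when the corner moves right (and, by transposition, to decrease when it moves up). So your extra layer proves a slightly more general monotonicity statement, at the cost of an additional reduction that the paper's identification of $Z_{m,n}(\yexit>0)$ as a boundary system makes unnecessary.

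One caution on the step you flag as delicate: your sentence saying that moving the corner from $(k,\ell)$ to $(k+1,\ell)$ replaces the effective left-boundary ratios by larger ones and the bottom-boundary ratios by smaller ones has the directions backwards. In fact the truncated system's west $V$-ratios are the raw weights $Y_{k+1,j}$, which are dominated by the effective ratios of the untruncated system, and the south $U$-ratios are unchanged; this is exactly the configuration $U\ge\wt U$ (with equality), $V\le\wt V$ required by Lemma \ref{monolm}, and it gives the domination of the $U$-variable for the truncated system that you assert. Since your later formulation of what must be checked is the correct one and the sign does verify, this is a repairable slip rather than a genuine gap.
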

\begin{proof} Ignore the original boundaries given by the
coordinate axes. 
Consider these partition functions 
on the positive quadrant
 $\bN^2$ with boundary $\{(i,1):i\in\bN\}\cup\{(1,j):j\in\bN\}$.
The boundary values for $Z_{(1,1), (m,n)}$ are 
 $\{Y_{i,1}:i\ge 2\}\cup\{Y_{1,j}:j\ge 2\}$.

From the definition of $Z_{m,n}(\yexit>0)$ 
\[  Z_{1,1}(\yexit>0)=V_{0,1}Y_{1,1} \quad\text{and}\quad
V_{1,2}= \frac{Z_{1,2}(\yexit>0)}{Z_{1,1}(\yexit>0)}
=  Y_{1,2}\biggl(1+\frac{V_{0,2}}{Y_{1,1}}\biggr).  \]
For $j\ge 3$ 
 apply \eqref{UVY1} inductively to compute the 
vertical boundary
values $V_{1,j}=Y_{1,j}(1+U_{1,j-1}^{-1}V_{0,j})$. 
 $V_{1,j}\ge Y_{1,j}$ for all $j\ge 2$.  
The  horizontal boundary values for $Z_{m,n}(\yexit>0)$ are 
simply $U_{i,1}=Y_{i,1}$ for $i\ge 2$.  
Lemma \ref{monolm} gives 
\[  \frac{Z_{m,n}(\yexit>0)}{Z_{m-1,n}(\yexit>0)} \le 
\frac{Z_{(1,1), (m,n)}}{Z_{(1,1),(m-1,n)}}
\quad\text{and}\quad
 \frac{Z_{m,n}(\yexit>0)}{Z_{m,n-1}(\yexit>0)} \ge 
\frac{Z_{(1,1), (m,n)}}{Z_{(1,1),(m,n-1)}}. \]
The second  inequality  of \eqref{Zcomp2} comes by transposing
the  second  inequality  above. 
\end{proof}

Relative to  a fixed rectangle $\Lambda_{m,n}=\{0,\dotsc,m\}\times\{0,\dotsc,n\}$,
define distances of  entrance points on the north and east boundaries
 from the corner $(m,n)$
 as  duals of the exit points \eqref{defxix}--\eqref{defxiy}:
 \be    \dxexit=\max\{ k\ge 0:  \text{$x_{m+n-i}=(m-i,n)$ for $0\le i\le k$} \}  \label{dualxix}\ee
 and 
 \be    \dyexit=\max\{ k\ge 0:  \text{$x_{m+n-j}=(m,n-j)$ for $0\le j\le k$} \}.  \label{dualxiy}\ee 

The next observation will not be used in the sequel, but it is curious to note
  the  following effect of the boundary conditions:  the chance that the last step
of the polymer
path  is along the $x$-axis does not depend on the endpoint $(m,n)$, but the chance
that the first step is along the $x$-axis increases strictly with $m$.  

\begin{proposition}  For all $m,n\ge 1$ these hold: 
\be  Q^\w_{m,n}\{\dxexit>0\}\overset{d}= \frac{A}{A+B} \label{xidist1}\ee
where $A\sim$ Gamma$(\theta,1)$ and  $B\sim$ Gamma$(\mu-\theta,1)$ are independent.
On the other hand,  
\be Q^\w_{m,n}\{\xexit>0\}\overset{d}=Q^\w_{m+1,n}\{\xexit>1\} < Q^\w_{m+1,n}\{\xexit>0\}. 
\label{xidist2}\ee
\end{proposition}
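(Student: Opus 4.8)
The plan is to obtain \eqref{xidist1} from a one‑line identity together with the Burke property, and \eqref{xidist2} by reversing the path, so that ``first two steps horizontal'' becomes ``last two steps horizontal'' and the dependence on the right edge of the rectangle disappears. For \eqref{xidist1}: a path reaches $(m,n)$ either from $(m-1,n)$ (last step horizontal, $\dxexit>0$) or from $(m,n-1)$, and the corresponding restricted partition functions are $Y_{m,n}Z_{m-1,n}$ and $Y_{m,n}Z_{m,n-1}$, whose sum is $Z_{m,n}$ by \eqref{Z1}; hence, using \eqref{Z2},
\[
Q^\w_{m,n}\{\dxexit>0\}=\frac{Y_{m,n}Z_{m-1,n}}{Z_{m,n}}
=\frac{Z_{m-1,n}}{Z_{m-1,n}+Z_{m,n-1}}
=\frac{U_{m,n}^{-1}}{U_{m,n}^{-1}+V_{m,n}^{-1}}.
\]
Applying Theorem \ref{burkethm} to any down‑right path through $(m,n)$ that contains the horizontal edge $\{(m-1,n),(m,n)\}$ and the vertical edge $\{(m,n),(m,n-1)\}$, the two associated edge variables are exactly $U_{m,n}$ and $V_{m,n}$, hence they are independent with $U_{m,n}^{-1}\sim$ Gamma$(\theta,1)$ and $V_{m,n}^{-1}\sim$ Gamma$(\mu-\theta,1)$; taking $A=U_{m,n}^{-1}$, $B=V_{m,n}^{-1}$ yields \eqref{xidist1}, and the law $A/(A+B)$ does not depend on $(m,n)$.

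For \eqref{xidist2}, the inequality is easy: in $\Lambda_{m+1,n}$ one has $\{\xexit>1\}\subseteq\{\xexit>0\}$ with set difference $\{\xexit=1\}$, the paths whose first step is to $(1,0)$ and second to $(1,1)$; since $n\ge1$ this set is nonempty and, all weights being a.s.\ positive, $Q^\w_{m+1,n}\{\xexit=1\}>0$ a.s., which gives $Q^\w_{m+1,n}\{\xexit>1\}<Q^\w_{m+1,n}\{\xexit>0\}$ a.s. For the distributional identity I pass to the reversed measure (write $Q^{*,\w}_{k,\ell}$ for the dual measure \eqref{Q*1} associated with the rectangle $\Lambda_{k,\ell}$): unwinding the definition of $x\mapsto x^*$ shows that in $\Lambda_{m+1,n}$ the reverse of $\{\xexit>1\}$ is $\{\dxexit>1\}$, and in $\Lambda_{m,n}$ the reverse of $\{\xexit>0\}$ is $\{\dxexit>0\}$, so by Lemma \ref{lemQ*1}
\[
Q^\w_{m+1,n}\{\xexit>1\}\ \overset{d}{=}\ Q^{*,\w}_{m+1,n}\{\dxexit>1\},
\qquad
Q^\w_{m,n}\{\xexit>0\}\ \overset{d}{=}\ Q^{*,\w}_{m,n}\{\dxexit>0\},
\]
and it therefore suffices to prove the \emph{pointwise} identity $Q^{*,\w}_{m+1,n}\{\dxexit>1\}=Q^{*,\w}_{m,n}\{\dxexit>0\}$.

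By the Markov representation \eqref{dualmarkov}--\eqref{dualpi}, $Q^{*,\w}$ is the law of an up‑right chain started at $(0,0)$ with interior transition $\pi^*_{x,x+e}=Z_{x+e}^{-1}/(Z_{x+e_1}^{-1}+Z_{x+e_2}^{-1})$, moving deterministically once it reaches the north or east edge of its rectangle. Let $\sigma_y$ and $\sigma_{x=m}$ be the first times the chain's second coordinate equals $n$, respectively its first coordinate equals $m$. In $\Lambda_{m,n}$, $\{\dxexit>0\}$ (last step horizontal) is exactly the event $\{\sigma_y<\sigma_{x=m}\}$ that the chain is absorbed on the top row before the right column; in $\Lambda_{m+1,n}$, $\{\dxexit>1\}$ (last two steps horizontal) is the event that the chain visits $(m-1,n)$, i.e.\ reaches $\{y=n\}$ while its first coordinate is still $\le m-1$, which --- the first coordinate being non‑decreasing --- is again $\{\sigma_y<\sigma_{x=m}\}$. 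Both events depend only on the chain stopped at time $\sigma_y\wedge\sigma_{x=m}$, and the transitions $\pi^*$ used up to then involve only the partition functions $Z_{i,j}$ with $i\le m$ and $j\le n$; these are literally the same random variables in $\Lambda_{m,n}$ and in $\Lambda_{m+1,n}$, since $Z_{i,j}$ depends only on the weights in $\Lambda_{i,j}$. Hence the two probabilities coincide as functions of $\w$, which proves \eqref{xidist2}. The one point needing care is precisely this identification --- that the dual‑chain kernels and the ``deterministic on the boundary'' conventions genuinely agree on $\{0,\dots,m-1\}\times\{0,\dots,n-1\}$ and on the transitions entering its top row.

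I expect the main obstacle to be conceptual rather than computational: realising that one should reverse the path. In the forward polymer, the probability of a prescribed initial segment is governed by partition functions running to the two different endpoints, and therefore visibly changes when one enlarges the rectangle; only after reversal is it expressed through partition functions emanating from the common origin, and then the extra column of $\Lambda_{m+1,n}$ plays no role, so the comparison between the two rectangles is immediate.
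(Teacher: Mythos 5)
Your proof is correct and takes essentially the same route as the paper: \eqref{xidist1} via the identity $Q^\w_{m,n}\{\dxexit>0\}=U_{m,n}^{-1}/(U_{m,n}^{-1}+V_{m,n}^{-1})$ together with Theorem \ref{burkethm}, and \eqref{xidist2} via the pointwise dual identity $Q^{*,\w}_{m+1,n}\{\dxexit>1\}=Q^{*,\w}_{m,n}\{\dxexit>0\}$ combined with Lemma \ref{lemQ*1}. The only cosmetic difference is that you verify this dual identity through the Markov representation \eqref{dualmarkov}--\eqref{dualpi} and a hitting-time comparison, whereas the paper reads it off directly from definition \eqref{Q*1} (a one-line telescoping using $U_{m+1,n}=Z_{m+1,n}/Z_{m,n}$); both verifications are valid.
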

\begin{proof}  By the definitions, 
\begin{align*}  &Q^\w_{m,n}\{\dxexit>0\} = \frac{Z_{m-1,n}Y_{m,n}}{Z_{m,n}} 
=\frac{U_{m,n}^{-1}}{U_{m,n}^{-1}+V_{m,n}^{-1}}. 
\end{align*}
The distributional claim \eqref{xidist1}  follows from the Burke property Theorem \ref{burkethm}.

For the  distributional claim in \eqref{xidist2}  observe first  directly from  
definition \eqref{Q*1} that  $Q^{*,\w}_{m,n}\{\dxexit>0\}=Q^{*,\w}_{m+1,n}\{\dxexit>1\}$.
Note that in this equality we have dual measures defined in distinct rectangles 
$\Lambda_{m,n}$ and $\Lambda_{m+1,n}$.
Then appeal to Lemma \ref{lemQ*1}. The last inequality in  \eqref{xidist2}   is immediate.
\end{proof}

Recall the notations $\vva(j)$ and  $\vvb(j)$ defined in 
\eqref{defv}--\eqref{defv1}, and introduce their vertical
counterparts:  
\be
\wwa(i)=\min\{j\in\bZ_+:  \exists k: x_k=(i,j) \} \label{defw}\ee
and 
\be \wwb(i)=\max\{j\in\bZ_+:  \exists k: x_k=(i,j) \} \label{defw1}\ee
Implication $\vva(j)>k\Rightarrow \wwa(k)<j$ holds, and transposition 
(that is, reflection across the diagonal)
interchanges  $\vva$ and  $\wwa$. 
 Similar properties are valid for $\vvb$ and $\wwb$.

\begin{proposition}  Assume weight distributions \eqref{distr4}
and rectangle dimensions \eqref{mnN}. 
 Then  
\[  \lim_{\delta\searrow 0}\varlimsup_{N\to\infty}
\Pann\{1\le \xexit\le \delta N^{2/3}\} =0.\]
Same result holds for $\yexit$.  
\label{lb-prop1}\end{proposition}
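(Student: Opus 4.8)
The plan is to run the exit-point competition argument of \cite{cato-groe-06, bala-cato-sepp}, transported from $(\max,+)$ to $(+,\cdot)$: if the path left the $x$-axis within the first $\delta N^{2/3}$ columns with non-vanishing annealed probability, the polymer would be paying essentially the same free energy as if it exited at $0$, which in the characteristic direction is strictly worse than exiting at the diffusive scale $\asymp N^{2/3}$. Write $\ell=\fl{\delta N^{2/3}}$; it suffices to bound $\Pann\{1\le\xexit\le\ell\}=\Eann\bigl[Q^\w\{1\le\xexit\le\ell\}\bigr]$. First I would peel off the first boundary weight: from $Z_{m,n}(\xexit\ge k)=U_{1,0}\,Z_{(1,0),(m,n)}(\xexit\ge k)$ and $Z_{m,n}\ge Z_{m,n}(\xexit\ge1)$,
\[
Q^\w\{1\le\xexit\le\ell\}=\frac{Z_{m,n}(\xexit\ge1)-Z_{m,n}(\xexit\ge\ell+1)}{Z_{m,n}}
\le 1-\frac{Z_{(1,0),(m,n)}(\xexit\ge\ell+1)}{Z_{(1,0),(m,n)}},
\]
so it is enough to prove the corresponding statement for the polymer from $(1,0)$ to $(m,n)$ that carries only the $x$-axis weights $\{U_{i,0}\}$ and bulk weights elsewhere (for which $\xexit\ge1$ always). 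Denoting its quenched and annealed laws by $\wt Q^\w$ and $\wt\Pann$, the goal becomes $\wt\Pann\{\xexit\le\ell\}\to0$.

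For the heart of the argument I would use $Z_{(1,0),(m,n)}(\xexit=k)=\bigl(\prod_{i=2}^{k}U_{i,0}\bigr)Y_{k,1}\,Z_{(k,1),(m,n)}$, in which $Z_{(k,1),(m,n)}$ is a bulk polymer whose rectangle, for $k\le AN^{2/3}$, is still within $O(N^{2/3})$ of characteristic. The structural input is that $\E[\log\prod_{i=2}^{k}U_{i,0}]=-(k-1)\digamf(\theta)$ is linear in $k$, while the mean free energy of $Z_{(k,1),(m,n)}$, Taylor-expanded in $k/N$, has its first-order term cancel this linear term — which is exactly the defining property \eqref{trig-st} of the characteristic direction — leaving a strictly concave correction $-\tfrac{c}{2}\,k^2/N$ with $c>0$ coming from $\trigamf'<0$. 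Thus $\log Z_{(1,0),(m,n)}(\xexit=k)$, as a process in $k$ on the scale $k\asymp N^{2/3}$, is a deterministic parabola $\mathrm{const}-\tfrac{c}{2}k^2/N$ of amplitude $\asymp N^{1/3}$ plus fluctuations also of order $N^{1/3}$ that behave like a two-sided random walk; hence the quenched exit-point law localizes near the argmax of a ``random walk minus parabola,'' whose law puts no mass near $0$. To make this quantitative I would compare the quenched mass carried by $\{1\le\xexit\le\ell\}$ with that carried by a window $\{cN^{2/3}\le\xexit\le AN^{2/3}\}$: using the variance bound \eqref{goal8.1} (via Lemma \ref{auxlm7}) to put each $\log Z_{(k,1),(m,n)}$ within $O(N^{1/3})$ of its mean, and the exit-tail bound \eqref{upaux13} of Corollary \ref{coraux3} to discard $\xexit\gtrsim AN^{2/3}$, one obtains $\wt\Pann\{\xexit\le\ell\}\le \varepsilon(A)+o_N(1)+\Phi(\delta)$ with $\Phi(\delta)\to0$; letting $N\to\infty$, then $A\to\infty$, then $\delta\to0$ gives the claim. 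The $\yexit$ statement follows by reflecting across the diagonal and interchanging $\theta\leftrightarrow\mu-\theta$.

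The hard part will be the free-energy bookkeeping: at this stage of the paper we do not yet have an exact mean or a variance bound for the one-sided / bulk partition functions $Z_{(k,1),(m,n)}$, so the required $O(N^{1/3})$ concentration must be extracted from the two-sided estimates — either via the comparison Lemma \eqref{Zcomp2} together with the monotonicity Lemma \ref{monolm}, or by arranging that only differences $\log Z_{(k,1),(m,n)}-\log Z_{(k',1),(m,n)}$ enter, which telescope into products of $U$-weights and two-sided quantities that \eqref{goal8.1} already controls. The truly delicate point is getting the quadratic curvature gap $\asymp N^{1/3}$ to dominate the comparable $N^{1/3}$-size fluctuations uniformly over the whole range of exit points, since it is precisely this balance that produces the $N^{2/3}$ exponent in the first place.
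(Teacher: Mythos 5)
Your reduction to the polymer started at $(1,0)$ and the decomposition $Z_{(1,0),(m,n)}(\xexit=k)=\bigl(\prod_{i=2}^{k}U_{i,0}\bigr)Y_{k,1}Z_{(k,1),(m,n)}$ are fine, but the core mechanism you propose --- mean parabola of amplitude $\asymp N^{1/3}$ beating $O(N^{1/3})$ concentration of each $\log Z_{(k,1),(m,n)}$ --- cannot prove this proposition, and you have in effect identified the obstruction yourself without resolving it. Over the window $1\le k\le \delta N^{2/3}$ the curvature gain of the parabola is only of order $\delta^2N^{1/3}$, which vanishes relative to the $O(N^{1/3})$ fluctuations as $\delta\searrow0$; and even comparing $k\approx0$ with a window $k\asymp N^{2/3}$, the mean gap and the fluctuations are of the same order with uncontrolled constants, so marginal concentration (via \eqref{goal8.1} or Lemma \ref{auxlm7}) can never show that the window carries more quenched mass than the origin. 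The statement is really about the \emph{joint} fluctuation structure of $k\mapsto\log Z_{(1,0),(m,n)}(\xexit=k)$ at increment scale $\delta N^{2/3}$: even if all means were equal, the exit point would avoid $[1,\delta N^{2/3}]$ because a random-walk--like profile is unlikely to achieve its maximum in a vanishing initial fraction of its range. Your sketch contains no device for accessing these increments: the increments of $\log Z_{(k,1),(m,n)}$ in $k$ are neither independent nor explicitly distributed, and moreover the expansion of $\E\log Z_{(k,1),(m,n)}$ to precision $o(N^{1/3})$ that your parabola argument requires is not available here --- the paper never establishes the mean of a boundary-free partition function beyond the $N^{1/3}$-scale concentration of \eqref{Zmom2.9}, which is itself proved later.

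The paper's proof supplies exactly the missing device. It splits $\P[Q(0<\xexit\le u)>\ttt]$ according to whether $Z_{m,n}(\xexit>u)/\Zalt_{(1,1),(m,n)}$ is small or $Z_{m,n}(0<\xexit\le u)/\Zalt_{(1,1),(m,n)}$ is large, and controls both by introducing an auxiliary environment whose boundary weights carry a tilted parameter $\lambda=\theta\mp aN^{-1/3}$; the comparison inequality \eqref{Zcomp2} together with reversal (the dual measure and Lemma \ref{lemQ*1}) sandwiches the ratios $\Zalt_{(k,1),(m,n)}/\Zalt_{(1,1),(m,n)}$ between products of i.i.d.\ weight ratios, i.e.\ explicit random walks with drift of order $N^{-1/3}$, with a prefactor $Q^{\wt\w}(\xexit>0)$ handled by Lemma \ref{auxlm7} in a shifted rectangle. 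The competition then becomes: the mass beyond $\delta N^{2/3}$ is bounded below by the running maximum of this walk over $[\delta,b]$, which converges to a Brownian functional that is strictly positive by the law of the iterated logarithm, while the mass in $(0,\delta N^{2/3}]$ is bounded above via Kolmogorov's inequality (Lemma \ref{lb-aux-lm3}) by a quantity that vanishes as $\delta\searrow0$. No expansion of bulk free-energy means is ever needed; everything is reduced to boundary weights through the Burke property. Without an analogue of this sandwich-by-random-walks step, your outline does not close.
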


%%%%%%%%%%%%%%%%%%%%%%%%%%%%%%%%constant for the calculation%%%%
\def\aaa{e^{\eta N^{1/3}}} %key constant in the calculation 
\def\ttt{h} %%%%another constant 

\begin{proof}  We prove the result for $\xexit$, and 
transposition gives it for $\yexit$. 
Take $\delta>0$ small and abbreviate $u=\fl{\delta N^{2/3}}$. 
By Fatou's lemma, it is enough to show that for all $0<\ttt<1$,
\be
\lim_{\delta\searrow 0}\varlimsup_{N\to\infty}
  \P\bigl[\, Q(0< \xexit\le u) >\ttt \, \bigr]=0. 
\label{lbaux1}\ee

 Fix a small $\eta>0$. Decompose the probability as follows. 
 \begin{align}
&\P\bigl[\, Q(0< \xexit\le u) >\ttt  \, \bigr]
=  \P\biggl[\;   Z_{m,n}(0<\xexit\le u)  
>\ttt \,Z_{m,n}   \;\biggr] \nn\\
&\le  \P\biggl[\;   Z_{m,n}(0<\xexit\le u)  
>\ttt \,Z_{m,n}(\xexit> u)   \;\biggr] \nn\\[4pt]
&= \P\biggl[\; \frac{ Z_{m,n}(0<\xexit\le u) }
{   \Zalt_{(1,1),(m,n)}}    >\ttt   \, \frac{ Z_{m,n}(\xexit> u)}{ \Zalt_{(1,1),(m,n)}} 
  \;\biggr] \nn\\[4pt]
&\le \P\biggl[\; \frac{Z_{m,n}(\xexit> u) }
{   \Zalt_{(1,1),(m,n)}} <\aaa \;\biggr]  \label{lbprob1}\\[4pt]
&\qquad 
+  \P\biggl[\; \frac{ Z_{m,n}(0<\xexit\le u)}{ \Zalt_{(1,1),(m,n)}} 
>  {\ttt\aaa}   \;\biggr]. 
 \label{lbprob2} \end{align}
We show separately that for small $\delta$, $\eta$ can be
chosen so that  probabilities 
\eqref{lbprob1} and \eqref{lbprob2} are asymptotically small. 

\bigskip
 
 \noindent
{\bf Step 1: Control of probability \eqref{lbprob1}. } 

\medskip

We begin with a general coupling lemma. Its proof shows 
 that it does not depend on any  
particular weight distribution.

\begin{lemma}
For each fixed $\w$, $  Q^{\w}_{m_1,n}(\xexit>0) \le   Q^{\w}_{m_2,n}(\xexit>0)  $  
for all $0<m_1<m_2$ and $n\ge 0$.  
\label{coupl-lm} \end{lemma}

\begin{proof} Fix $\w$.  We construct a coupling of polymer paths.  
 On the full lattice $\bZ_+^2$ define a backward Markov kernel 
\be 
\backpi_{x,x-e}= \frac{Y_{x}Z_{x-e}}{Z_{x}}
=  \frac{Z_{x-e}}{Z_{x-e_1}
+ Z_{x-e_2}}\,, \qquad x\in\bN^2,\; e\in\{e_1,e_2\},  \label{backpi}\ee
with the obvious degenerate transitions $\backpi_{(i,0),(i-1,0)}=\backpi_{(0,j),(0,j-1)}=1$ 
on the axes and absorption $\backpi_{(0,0),(0,0)}=1$ at the origin.  
For each $x\in\bZ_+^2\smallsetminus\{(0,0)\}$ pick a jump to $v(x)\in\{x-e_1,x-e_2\}$  according to these
transition probabilities.   Fix an endpoint $(m,n)$.  Construct a 
  path $x_{0,m+n}$  from the origin to  $(m,n)$ backwards,  beginning with $x_{m+n}=(m,n)$
 and then iterating   
$x_{k}=v(x_{k+1})$ for $k=m+n-1, m+n-2, \dotsc, 0$. The process ends at $x_0=0$. The probability of the path is 
\[  \prod_{k=1}^{m+n} \backpi_{x_k,x_{k-1}} =  \frac1{Z_{m,n}}\prod_{k=1}^{m+n} Y_{x_k}  =     
Q^\om_{m,n}(x_{0,m+n}). \] 
In other words, specifying the jumps $\{v(x)\}$ constructs a simultaneous 
realization of the polymer paths under all quenched measures $Q^\om_{m,n}$ for a fixed $\w$. 

Suppose $m_1<m_2$ and the path between the origin and $(m_1,n)$ 
goes through the point $(1,0)$.  Then the same is true for the path 
between the origin and $(m_2,n)$. This is because the path from $(m_2,n)$
cannot reach $(0,1)$ without intersecting the path from $(m_1,n)$, and 
once they intersect they   merge by the construction. \end{proof} 

 Turning to probability \eqref{lbprob1}, first decompose according
to the value of $\xexit$: 
\begin{align*}
&\frac{ Z_{m,n}(\xexit> u)}{ \Zalt_{(1,1),(m,n)}}
=\sum_{k=u+1}^m \biggl(\;\prod_{i=1}^k U_{i,0} \biggr)\cdot
 \frac{ \Zalt_{(k,1),(m,n)} }{ \Zalt_{(1,1),(m,n)}}. 
\end{align*} 
Construct a new system $\wt\w$  in the rectangle $\Lambda_{m,n}$.
Fix a parameter $a>0$ that we will take large in the end.   The interior
weights of $\wt\w$ are  $Y^{\wt\w}_{i,j}=Y_{m-i+1,n-j+1}$
for $(i,j)\in\{1,\dotsc,m\}\times\{1,\dotsc,n\}$.  
The boundary weights
$\{U_{i,0}^{\wt\w}, V_{0,j}^{\wt\w}\}$ obey the standard setting \eqref{distr4} 
with  a new parameter $\lambda=\theta-aN^{-1/3}$ (but $\mu$ stays 
fixed), 
and they are independent of 
  the old weights $\w$.  
Define new dimensions for a rectangle by  
\[ (\bar m, \bar n)= %( m-\bar u, \bar n)= 
\bigl(m+ \fl{N\trigamf(\mu-\lambda)}-\fl{N\trigamf(\mu-\theta)}\, ,\, 
n+\fl{N\trigamf(\lambda)}-\fl{N\trigamf(\theta)}\bigr).\] 
We have the bounds 
\[  \bar n -n = \fl{N\trigamf(\lambda)} - \fl{N\trigamf(\theta)} \ge a\abs{\trigamf'(\theta)} N^{2/3}-1 
\ge c_1 aN^{2/3} \]
for a constant $c_1=c_1(\theta)$, and 
\[  \bar u=m-\bar m = \fl{N\trigamf(\mu-\theta)} - \fl{N\trigamf(\mu-\lambda)} 
\ge a\abs{\trigamf'(\mu-\lambda)} N^{2/3}-1 \ge bN^{2/3} 
 \]
for another constant $b$. By taking $a$ large enough we can
guarantee that $b>\delta$. (It is helpful to remember here
that $\trigamf'<0$ and  $\trigamf''>0$.)

By \eqref{Zcomp2}  and \eqref{Z2}, 
\begin{align*}
 &\frac{ \Zalt_{(k,1),(m,n)} }{ \Zalt_{(1,1),(m,n)}} =  
 \frac{ Z^{\, {\scriptscriptstyle\square}, \,\wt\w}_{(1,1),(m-k+1,n)} }
 { Z^{\, {\scriptscriptstyle\square},\, \wt\w}_{(1,1),(m,n)}}
 \ge \frac{ Z^{\wt\w}_{m-k+1,n}(\xexit>0) }{ Z^{\wt\w}_{m,n}(\xexit>0) }  \\
&= \frac{ Q^{\wt\w}_{m-k+1,n}(\xexit>0) Z^{\wt\w}_{m-k+1,n}}
 { Q^{\wt\w}_{m,n}(\xexit>0) Z^{\wt\w}_{m,n} } 
 \ge Q^{\wt\w}_{m-k+1,n}(\xexit>0) \biggl( \;\prod_{i=1}^{k-1} U^{\wt\w}_{m-i+1,n} \biggr)^{-1}.
\end{align*} 
After these transformations, 
\[
\text{\eqref{lbprob1}} \le \P\biggl[  U_{1,0}
\sum_{k=u+1}^m \biggl(\;\prod_{i=2}^k \frac{U_{i,0}}{U^{\wt\w}_{m-i+2,n}} \biggr) 
Q^{\wt\w}_{m-k+1,n}(\xexit>0) <\aaa \biggr]. 
\]
Inside this probability $\{U_{i,0}\}$ are independent 
of $\wt\w$.  Restrict the sum in the 
probability to $k\le \bar u$ and apply  Lemma \ref{coupl-lm}. 
This  turns the bound 
  above into 
  \begin{align}
\text{\eqref{lbprob1}} &\le
\P\biggl[ Q^{\wt\w}_{m-\bar u+1,n}(\xexit>0) \,  U_{1,0}
\sum_{k=u+1}^{\bar u} \biggl(\;\prod_{i=2}^k \frac{U_{i,0}}{U^{\wt\w}_{m-i+2,n}} \biggr) <\aaa \biggr]\nn\\[3pt]
&\le \P\Bigl[\, Q^{\wt\w}_{m-\bar u+1,n}(\xexit>0)\le \tfrac12 \,\Bigr] 
\label{lbprob4} \\[3pt]
&\qquad + 
\P\biggl[ \,  U_{1,0}
\sum_{k=u+1}^{\bar u} \biggl(\;\prod_{i=2}^k \frac{U_{i,0}}{U^{\wt\w}_{m-i+2,n}} \biggr) \le2{\aaa}\,\biggr].
\label{lbprob5}
\end{align}

%\[  \text{\eqref{lbprob1}} \le \P\biggl[  U_{1,0} \sum_{k=u+1}^m \biggl(\;\prod_{i=2}^k \frac{U_{i,0}}{U^{\wt\w^*}_{m-i+2,n}} \biggr) Q^{*,\wt\w}_{m-k+1,n}(\dxexit>0) <\aaa \biggr]. \]
%By the definition \eqref{Q*1} of the dual measure, $Q^{*,\wt\w}_{m-k+1,n}(\dxexit>0)=Q^{*,\wt\w}_{m,n}(\dxexit\ge k)$.  We get the bound
 We treat first probability \eqref{lbprob4}.
 Apply the distribution-preserving reversal 
$\wt\w\mapsto \wt\w^*$, recall \eqref{Q*2}, and use the 
definition  \eqref{Q*1} of the dual measure to write 
\begin{align*}   Q^{\wt\w}_{m-\bar u+1,n}(\xexit>0)
\overset{d}=  Q^{*,\wt\w}_{m-\bar u+1,n}(\dxexit>0)
=Q^{*,\wt\w}_{m,n}(\dxexit\ge \bar u). 
\end{align*}
Going over to complements, 
\[
\eqref{lbprob4} = \P\Bigl[\, Q^{*,\wt\w}_{m,n}\{\dxexit< \bar u\}
> \tfrac12 \,\Bigr]. \]
We claim that 
\be  Q^{*,\wt\w}_{m,n}\{\dxexit\le  \bar u\} = Q^{*,\wt\w}_{\bar m, \bar n}\{\dyexit>\bar n -n\}. 
\label{lbaux2.9}\ee
Equality \eqref{lbaux2.9} comes from the next computation that utilizes the Markov property
\eqref{dualmarkov} of the dual measure. In the rectangle $\Lambda_{m,n}$
  event $\{\dxexit\le  \bar u\}$ says that
the path does not touch the segment $\{0,\dotsc, \bar m-1\}\times\{n\}$. 
Consequently the path uses one of the edges $((\bar m-1,\ell),  (\bar m,\ell))$ 
for $0\le \ell<n$. 
\begin{align*}
&Q^{*,\wt\w}_{m,n}\{\dxexit\le  \bar u\} = \sum_{\ell=0}^{n-1} 
Q^{*,\wt\w}_{m,n}\{  x_{\bar m+\ell-1}=(\bar m-1,\ell), \, x_{\bar m+\ell}=(\bar m,\ell)\}\\
&= \sum_{\ell=0}^{n-1} \sum_{x_\centerdot\in\Pi_{\bar m-1,\ell}} 
\biggl(\,\prod_{k=0}^{\bar m+\ell-1} X^{\wt\w}_{x_k} \biggr)\frac1{Z^{\wt\w}_{\bar m, \ell}}
= \sum_{\ell=0}^{n-1} \sum_{x_\centerdot\in\Pi_{\bar m-1,\ell}} 
\biggl(\,\prod_{k=0}^{\bar m+\ell-1} X^{\wt\w}_{x_k} \biggr)
\biggl(\,\prod_{j=\ell}^{\bar n-1} X^{\wt\w}_{\bar m, j} \biggr)\frac1{Z^{\wt\w}_{\bar m, \bar n}}\\
&= Q^{*,\wt\w}_{\bar m, \bar n}\{\dyexit>\bar n -n\}. 
\end{align*}  
The second-last equality above relied on the convention 
$X^{\wt\w}_{\bar m, j} =V^{\wt\w}_{\bar m, j+1}$ for the dual variables defined in the rectangle
$\Lambda_{\bar m, \bar n}$.  This checks \eqref{lbaux2.9}. 
Now appeal to Lemma \ref{auxlm7},
for $N\ge 1$ and  large enough $a$  to ensure 
$e^{-\delta(c_1a)^2N^{1/3}}\le 1/2$:  
\be\begin{aligned}  \eqref{lbprob4} &\le  \P\Bigl[\,  
Q^{*,\wt\w}_{\bar m, \bar n}\{\dyexit> c_1aN^{2/3}\}  \ge\tfrac12 \,\Bigr]\\
&= \P\Bigl[\,  
Q^{\wt\w}_{\bar m, \bar n}\{\yexit> c_1aN^{2/3}\}  \ge\tfrac12\,\Bigr]
\; \le \; C(\theta)a^{-3}.  
\end{aligned} \label{lbaux3}\ee

To treat probability \eqref{lbprob5},  let $A_i=U_{i+1,0}^{-1}\sim$ Gamma($\theta, 1$)
and $\wt A_i=(U^{\wt\w}_{m-i+1,n})^{-1}\sim$ Gamma($\lambda, 1$) so that we can
write 
\begin{align*}
\eqref{lbprob5} &= \P\biggl[ \;   
\sum_{k=u}^{\bar u-1} \biggl(\;\prod_{i=1}^k \frac{\wt A_{i}}{A_i} \biggr) 
\le2{\aaa A_0} \,\biggr]\\
&\le  \P\biggl[ \;   \sup_{u\le k<\bar u} \exp\Bigl\{ \, \sum_{i=1}^k (\log \wt A_i-\log A_i)\Bigr\}
\le 2{\aaa A_0}\,\biggr]. 
\end{align*}
We approximate the sum in the exponent by a Brownian motion.  Compute the mean: 
\[  \E (\log \wt A_i-\log A_i) = \digamf(\lambda)-\digamf(\theta) \ge -a_1N^{-1/3} \]
for a positive constant $a_1\approx \trigamf(\theta)a$.
(Recall that $\trigamf=\digamf'>0$.)  Define a continuous path 
$\{S_N(t): t\in\bR_+\}$ by 
\[  S_N(kN^{-2/3}) = N^{-1/3} \sum_{i=1}^{k} 
 (\log \wt A_i-\log A_i -\E\log \wt A_i+\E\log A_i ),
\quad k\in\bZ_+,   \]
and by linear interpolation.  
Then rewrite the probability from above:
 \begin{align*}
\eqref{lbprob5} &\le \P\Bigl[ \; \sup_{\delta\le t\le b} \bigl( S_N(t)-ta_1\bigr) 
\le \eta+ N^{-1/3}\log 2A_0 \,\Bigr]. 
\end{align*}
As $N\to\infty$, $S_N$ converges to a Brownian motion $B$ and so
\be
\varlimsup_{N\to\infty}  \eqref{lbprob5} 
\le \mP\Bigl[ \; \sup_{\delta\le t\le b} \bigl( B(t)-ta_1\bigr) 
\le \eta\Bigr] \searrow 0 \quad\text{as $\delta, \eta\searrow 0$.} 
\label{lbaux4}\ee

Combining \eqref{lbaux3} and \eqref{lbaux4} shows that, given $\e>0$,
we can first pick $a$ large enough to have 
$\varlimsup_{N\to\infty}\eqref{lbprob4}\le \e/2$.  Fixing $a$ fixes
$a_1$, and then we fix $\eta$ and $\delta$ small enough to 
have $\varlimsup_{N\to\infty}  \eqref{lbprob5} \le\e/2$. 
This is possible because $\sup_{0< t\le b}( B(t)-ta_1)$ is
a strictly positive random variable by the law of the iterated 
logarithm. 
Together these give $\varlimsup_{N\to\infty}  \eqref{lbprob1} \le \e$.

\bigskip
 
 \noindent
{\bf Step 2: Control of probability \eqref{lbprob2}. } 

\medskip

For later use we prove a  lemma that gives more than presently 
needed. 

%%%%%%%%%% suttu begins 

\begin{lemma}  Assume weight distributions \eqref{distr4} with parameters $0<\theta<\mu$ 
and rectangle dimensions \eqref{mnN} with parameter $\gamma> 0$.  Let $a, b, s>0$. 

{\rm (i)} Let  $0<\e<1$. There 
exists a constant $C=C(\theta, \mu,  \gamma)<\infty$ such that, if
\be b\ge C\e^{-1/2}(a +\sqrt{a}\,),  
\label{lb-aux-condb}\ee
 then  
\be \varlimsup_{N\to\infty}
 \P\biggl[  \; \frac{ Z_{m,n}(0<\xexit\le aN^{2/3})}{ \Zalt_{(1,1),(m,n)}} 
\ge s e^{bN^{1/3}}\,\biggr]  
\le \e. \label{lbauxb10}\ee

{\rm (ii)}     There exist  finite positive  constants $N_0$, $b_0$ 
and $C$   that can depend on  $(\theta, \gamma, s)$, 
such that,  for $N\ge N_0$ and $b\ge b_0$, 
\be  \P\biggl[  \; \frac{ Z_{m,n}(0<\xexit\le \sqrt bN^{2/3})}{ \Zalt_{(1,1),(m,n)}} 
\ge s e^{bN^{1/3}}\,\biggr]  \le  Cb^{-3/2}.  
  \label{lbauxb10.1}\ee
 \label{lb-aux-lm3}\end{lemma}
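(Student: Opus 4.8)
The plan is to exploit the exit–point decomposition
\[
\frac{Z_{m,n}(0<\xexit\le aN^{2/3})}{\Zalt_{(1,1),(m,n)}}
=\sum_{k=1}^{\fl{aN^{2/3}}}\Bigl(\prod_{i=1}^k U_{i,0}\Bigr)\,
\frac{\Zalt_{(k,1),(m,n)}}{\Zalt_{(1,1),(m,n)}}\,,
\]
obtained by splitting according to $\xexit=k$ (the path collects $U_{1,0},\dots,U_{k,0}$ on the axis and then runs in the bulk from $(k,1)$). The crude estimate $\Zalt_{(k,1),(m,n)}/\Zalt_{(1,1),(m,n)}\le\prod_{i=1}^{k-1}Y_{i,1}^{-1}$, keeping only the single path through $(k,1)$ in the denominator, is far too weak: since $\E\log(U_{i,0}Y_{i,1}^{-1})=\digamf(\mu)-\digamf(\theta)>0$ it produces a factor $e^{\Theta(N^{2/3})}$. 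The point is to replace it by a bound in which the drift cancels. Following the scheme of Step~1 of the proof of Proposition~\ref{lb-prop1}, I would reverse the environment (Proposition~\ref{pr-rev}), turning $\Zalt_{(k,1),(m,n)}/\Zalt_{(1,1),(m,n)}$ into a ratio $Z^{\square,\wt\w}_{(1,1),(m-k+1,n)}/Z^{\square,\wt\w}_{(1,1),(m,n)}$ of bulk partition functions with a common corner; telescoping in the first coordinate and applying the comparison inequality \eqref{Zcomp2} after gluing on a stationary $\theta$–boundary (keeping the parameter equal to $\theta$, so the rectangle already sits in the characteristic direction) bounds this ratio by $Z^{\wt\w}_{m-k+1,n}(\yexit>0)/Z^{\wt\w}_{m,n}(\yexit>0)$. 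The environment above the $x$–axis here is only a one–sided boundary model, so one dominates it by a genuine two–sided stationary model via the monotonicity Lemma~\ref{monolm}; together with the Burke property (Theorem~\ref{burkethm}) this yields
\[
\frac{\Zalt_{(k,1),(m,n)}}{\Zalt_{(1,1),(m,n)}}\;\le\;\prod_{i=1}^{k-1}\rho_i\,,
\]
where the $\rho_i$ are i.i.d.\ $\mathrm{Gamma}(\theta,1)$ variables, independent of $\{U_{i,0}\}$, up to factors which are controlled by the exit–point bounds of Lemma~\ref{auxlm7} exactly as in Step~1.

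Since $\E\log\rho_i=\digamf(\theta)$, the increments $\log(U_{i,0}\rho_i)$ are i.i.d., mean zero, with finite variance and an exponential moment (each factor being $\pm\log$ of a gamma variable). Writing $S_j=\sum_{i=1}^j\log(U_{i,0}\rho_i)$ for this mean–zero random walk we obtain
\[
\frac{Z_{m,n}(0<\xexit\le aN^{2/3})}{\Zalt_{(1,1),(m,n)}}
\;\le\;\sum_{k=1}^{\fl{aN^{2/3}}}U_{k,0}\,e^{S_{k-1}}
\;\le\;\Bigl(\max_{k\le aN^{2/3}}U_{k,0}\Bigr)\cdot aN^{2/3}\cdot e^{\,\max_{0\le j\le aN^{2/3}}S_j^{+}}\,.
\]
For part~(i) one uses Doob's $L^2$ maximal inequality, $\E\bigl[(\max_{j\le\fl{aN^{2/3}}}S_j^{+})^2\bigr]\le 4\,\fl{aN^{2/3}}\,\Vvv[\log(U_{1,0}\rho_1)]$, and Chebyshev to get $\P[\max_jS_j^{+}\ge\tfrac13 bN^{1/3}]\le C(\theta)\,a/b^2$; the prefactor $(\max_{k\le aN^{2/3}}U_{k,0})\cdot aN^{2/3}$ is only polynomial in $N$ (the maximum of $O(N^{2/3})$ i.i.d.\ variables with power tail $\P[U_{1,0}\ge t]\le C t^{-\theta}$), hence $e^{o(N^{1/3})}$ with probability tending to $1$, so it eventually contributes at most $\tfrac13 bN^{1/3}$. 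Thus $\varlimsup_N\P[\,\cdot\ge ce^{bN^{1/3}}]\le C(\theta)a/b^2$, which is $\le\e$ once $b$ obeys \eqref{lb-aux-condb} — the extra linear term in $a$ absorbing the lower–order drift and prefactor corrections — giving \eqref{lbauxb10}. For part~(ii) one takes $a=\sqrt b$ and replaces the $L^2$ bound by its exponential form: since the increments have an exponential moment, Doob applied to $e^{\lambda S_j}$ gives $\P[\max_{j\le\fl{\sqrt bN^{2/3}}}S_j\ge\tfrac13 bN^{1/3}]\le\exp(-c(\theta)\min(b^{3/2},bN^{1/3}))\le C(\theta)b^{-3/2}$ for all $N\ge1$, $b\ge1$ (the first alternative being the gaussian regime $N\gtrsim b^{3/2}$, the second the large–deviation regime); likewise $\P[(\max_{k\le\sqrt bN^{2/3}}U_{k,0})\cdot\sqrt bN^{2/3}\ge e^{bN^{1/3}/2}]\le b^{-3/2}$ for $N\ge N_0(\theta)$ by the power tail of $U_{1,0}$. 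Multiplying out the three estimates and adjusting $N_0$ to absorb the constant $c$ yields \eqref{lbauxb10.1}.

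The step I expect to be the main obstacle is the reduction $\Zalt_{(k,1),(m,n)}/\Zalt_{(1,1),(m,n)}\le\prod_{i=1}^{k-1}\rho_i$ with $\rho_i$ i.i.d.\ $\mathrm{Gamma}(\theta,1)$: the comparison must be sharp enough that the drift against $\prod_iU_{i,0}$ cancels \emph{exactly}, since any residual drift, even of order $N^{-1/3}$ per step, accumulates to $e^{\Theta(N^{2/3})}$ over the $\fl{aN^{2/3}}$ terms and ruins the bound. This is precisely where the reversal, the comparison \eqref{Zcomp2}, and Burke's property have to be used in concert, with the additional care that the half–plane environment entering $Z_{m,n}(\yexit>0)$ is not itself a stationary boundary model and must be dominated by one via Lemma~\ref{monolm} without spoiling the telescoping. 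The remaining ingredients — Doob's maximal inequality, the power–tail estimate for $\max_kU_{k,0}$, and the split of the target scale $bN^{1/3}$ between random–walk fluctuation and prefactor — are routine once the drift–free random walk is in hand.
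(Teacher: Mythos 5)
Your decomposition over the exit point, the use of reversal together with \eqref{Zcomp2} and the Burke property, and the final random-walk maximal estimates are all in the spirit of the paper's argument. But the step you yourself flag as the main obstacle is where the proposal breaks: you take the auxiliary boundary on the reversed environment with the \emph{same} parameter $\theta$, so that the comparison walk $\sum_i\log(U_{i,0}\rho_i)$ is exactly mean zero, and you claim the leftover factor is "controlled by Lemma \ref{auxlm7} exactly as in Step 1." After the comparison, that leftover factor is precisely $1/Q^{\wt\w}_{m,n}\{\yexit>0\}$, and with parameter $\theta$ it is \emph{not} controllable: Lemma \ref{auxlm7} bounds quenched probabilities of events $\{\xexit\ge u\}$ only for $u\gtrsim N^{2/3}$, and in both Step 1 of Proposition \ref{lb-prop1} and the paper's proof of this lemma the lower bound $Q^{\wt\w}\{\yexit>0\}\ge\tfrac12$ (outside an event of probability $\le Cr^{-3}$) is obtained only because the auxiliary parameter is tilted to $\lambda=\theta\pm rN^{-1/3}$, which displaces the characteristic rectangle by $\sim rN^{2/3}$ and turns $\{\xexit>0\}$ into an event $\{\xexit>crM^{2/3}\}$ in an enlarged rectangle, to which Lemma \ref{auxlm7} applies after duality. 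With $\lambda=\theta$ the rectangle is characteristic, $Q^{\wt\w}\{\yexit>0\}=\bigl(1+Z^{\wt\w}(\xexit>0)/Z^{\wt\w}(\yexit>0)\bigr)^{-1}$, and the log-ratio of the two restricted partition functions fluctuates on the scale $N^{1/3}$; hence the prefactor is of size $e^{\Theta(N^{1/3})}$ with probability bounded away from zero, i.e.\ of the same order as the target threshold $ce^{bN^{1/3}}$. So it cannot be absorbed as a "correction," and neither \eqref{lbauxb10} for small $\e$ nor the quantitative bound $Cb^{-3/2}$ in \eqref{lbauxb10.1} follows from your scheme.

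The structure of hypothesis \eqref{lb-aux-condb} is itself a hint that exact drift cancellation is not the intended mechanism: the paper deliberately tilts to $\lambda=\theta+rN^{-1/3}$ (with $r$ chosen as a function of $\e$, respectively of $b$), accepts a per-step drift $\digamf(\lambda)-\digamf(\theta)\approx\trigamf(\theta)rN^{-1/3}$ which accumulates to $\approx r\trigamf(\theta)\,aN^{1/3}$ over the $\fl{aN^{2/3}}$ terms --- this is exactly the linear-in-$a$ term that $b$ must dominate in \eqref{lb-aux-condb} --- and in exchange gains $\P\bigl[Q^{\wt\w}_{m,n}\{\yexit>0\}<\tfrac12\bigr]\le Cr^{-3}$ via the dual-measure identity and Lemma \ref{auxlm7}. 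Your Doob/Kolmogorov estimates for the mean-zero (or small-drift) walk and the polynomial control of $\max_k U_{k,0}$ are fine and essentially match the paper's treatment of \eqref{lbprobb11}, but without the tilt the argument has a genuine gap at the prefactor, and with the tilt your claim of exact cancellation (and the ensuing $Ca/b^2$ bound with no linear $a$-term) has to be redone along the paper's lines.
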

 
  The key technical point of part (ii)  is that  $C$ and $N_0$ do not depend on $b$. Their  dependence on other constants is harmless.

 \begin{proof}   We begin with that segment  of the proof  that serves both parts (i) and (ii) of the lemma.   Let $u=\fl{aN^{2/3}}$. 
 First decompose.  
\begin{align}\label{lb-ratio4}
\frac{ Z_{m,n}(0<\xexit\le u)}{ \Zalt_{(1,1),(m,n)}}
=\sum_{k=1}^u \biggl(\;\prod_{i=1}^k U_{i,0} \biggr)\,
 \frac{ \Zalt_{(k,1),(m,n)} }{ \Zalt_{(1,1),(m,n)}}\, . 
\end{align} 
Construct a new environment $\wt\w$  in the rectangle $\Lambda_{m,n}$.  The interior
weights of $\wt\w$ are  $Y^{\wt\w}_{i,j}=Y_{m-i+1,n-j+1}$.  The new boundary weights
$\{U_{i,0}^{\wt\w}, V_{0,j}^{\wt\w}\}$ are independent of 
  the old weights $\w$
  %  The boundary weights $\{U_{i,0}^{\wt\w}, V_{0,j}^{\wt\w}\}$ 
  and they obey 
  a new parameter $\lambda=\theta+rN^{-1/3}$ with $r>0$.   For  part (i)    $r$ can be arbitrarily large because we let $N\to\infty$.  For part (ii) we need to be careful about the acceptable pairs $(N,r)$ because an admissible parameter $\lambda$ must satisfy  $\lambda<\mu$.  
  
    By \eqref{Zcomp2}  and \eqref{Z2}, 
\begin{align*}
 &\frac{ \Zalt_{(k,1),(m,n)} }{ \Zalt_{(1,1),(m,n)}} =  
 \frac{ Z^{\, {\scriptscriptstyle\square}, \,\wt\w}_{(1,1),(m-k+1,n)} }
 { Z^{\, {\scriptscriptstyle\square},\, \wt\w}_{(1,1),(m,n)}}
 \le \frac{ Z^{\wt\w}_{m-k+1,n}(\yexit>0) }{ Z^{\wt\w}_{m,n}(\yexit>0) }  \\
&= \frac{ Q^{\wt\w}_{m-k+1,n}\{\yexit>0\} \, Z^{\wt\w}_{m-k+1,n}}
 { Q^{\wt\w}_{m,n}\{\yexit>0\}\, Z^{\wt\w}_{m,n} } 
 \le   \frac1{ Q^{\wt\w}_{m,n}\{\yexit>0\} } \,\biggl( \;\prod_{i=1}^{k-1} U^{\wt\w}_{m-i+1,n} \biggr)^{-1}. 
\end{align*} 
  Write  
$A_i=U_{i+1,0}^{-1}\sim$ Gamma($\theta, 1$)
and $\wt A_i=(U^{\wt\w}_{m-i+1,n})^{-1}\sim$ Gamma($\lambda, 1$).   
\begin{align}
\text{probability in \eqref{lbauxb10}} &\le  \P\biggl[ \; \frac{U_{1,0}} { Q^{\wt\w}_{m,n}\{\yexit>0\} } 
\sum_{k=1}^u \biggl(\;\prod_{i=2}^k \frac{U_{i,0}}{U^{\wt\w}_{m-i+2,n}} \biggr)  \ge
s e^{bN^{1/3}} \; \biggr]\nn\\[5pt]
&\le  
 \P\bigl[  \, Q^{\wt\w}_{m,n}\{\yexit>0\} <\tfrac12 \, \bigr] \label{lbprobb10} \\
&\qquad +  \P\biggl[ \;  A_0^{-1} 
\sum_{k=1}^u \biggl(\;\prod_{i=1}^{k-1} \frac{\wt A_i}{A_i}\, \biggr)  
\ge \tfrac12 s e^{bN^{1/3}} \; \biggr].  \label{lbprobb11}
\end{align} 

To treat  the probability in \eqref{lbprobb10}, define 
a new scaling parameter $M=n/\trigamf(\lambda)$
and  new rectangle dimensions  
\[  (\bar m,\bar n) 
= \bigl( \fl{M\trigamf(\mu-\lambda)} \,,\, n\bigr) = \bigl( \fl{M\trigamf(\mu-\lambda)} \,,\, M\trigamf(\lambda)\bigr). \]
%The deviation from characteristic shape is the same: \[  (\bar m,\bar n) -\bigl(\fl{M\trigamf(\mu-\lambda)}, \fl{M\trigamf(\lambda)}\bigr) = (m,n) - \bigl(\fl{N\trigamf(\mu-\theta)}, \fl{N\trigamf(\theta)}\bigr). \] 
The upper bound Lemma \ref{auxlm7} is valid for $\lambda$ and $(\bar m,\bar n)$ with $\mnc_M=1$.    $1/2\le M/N\le 2$ for $N\ge N_1(\theta,\gamma)$. 
\begin{align*}
\bar m -m &=  \fl{M\trigamf(\mu-\lambda)} - N\trigamf(\mu-\theta) -\gamma N^{2/3}  \\
%&=  \biggl\lfloor  \frac{n\trigamf(\mu-\lambda)}{\trigamf(\lambda)}\biggr\rfloor - N\trigamf(\mu-\theta) -\gamma N^{2/3}  \\
&\ge  M\trigamf(\mu-\lambda)   - M \frac{\trigamf(\lambda)\trigamf(\mu-\theta)}{ \trigamf(\theta)}  \;-\;  \frac{\trigamf(\mu-\theta)}{\trigamf(\theta)} \gamma N^{2/3}  -\gamma N^{2/3}  -1  \\
&=\frac{M}{\trigamf(\theta)} \bigl[   \trigamf(\theta) \trigamf(\mu-\lambda) - 
\trigamf(\lambda)\trigamf(\mu-\theta)  \bigr]   - C_1(\theta,\mu,\gamma) M^{2/3}\\
&=\frac{M}{\trigamf(\theta)} \bigl[ -  \trigamf(\theta)\Psi_2(\rho_1) - 
 \trigamf(\mu-\theta) \Psi_2(\rho_2) \bigr] (\lambda-\theta)   - C_1(\theta,\mu,\gamma) M^{2/3}\\
 &\ge M^{2/3}\bigl[  C_2(\theta,\mu)r  -  C_1(\theta,\mu,\gamma)  \bigr]  .  
\end{align*}

Thus there exists a constant $c_2=c_2(\theta,\mu,\gamma)>0$ such that 
\[ \bar m -m   \ge c_2 r M^{2/3} \]
provided 
\[ r\ge  r_0(\theta,\mu,\gamma) = 2C_1(\theta,\mu,\gamma)/C_2(\theta,\mu), \]   $N\ge N_1(\theta,\gamma)$,  and $\lambda$ is restricted to (say)  $[\theta, (\theta+\mu)/2]$  (which requires $N$ large enough relative to $r$). 

Consider the complement $\{\xexit>0\} $ of the inside event in 
\eqref{lbprobb10}.  Apply
$\wt\w\mapsto\wt\w^*$, and use the definition \eqref{Q*1} of the dual measure
 to go from $\Lambda_{m,n}$
to the larger rectangle $\Lambda_{\bar m,n}=\Lambda_{\bar m,\bar n}$
\begin{align*}
Q^{\wt\w^*}_{m,n}\{\xexit>0\} &= Q^{*, \wt\w}_{m,n}\{\dxexit>0\} 
= Q^{*, \wt\w}_{\bar m,n}\{\dxexit> \bar m-m\}
\le  Q^{*, \wt\w}_{\bar m,\bar n}\{\dxexit> c_2 r M^{2/3}\}. 
\end{align*}
We can fix the lower bounds on $r$ and $N$ large enough so that 
\be  e^{-\delta(c_2r)^2M^{1/3}}\le \tfrac12 
%\Longleftrightarrow  N^{1/3}r^2\ge c_3(\theta,\mu,\gamma), 
\label{lbauxb10.5}\ee
Then by Lemma \ref{lemQ*1} and Lemma \ref{auxlm7},  
\be\begin{aligned} 
\eqref{lbprobb10}&=  \P\bigl[  \, Q^{\wt\w}_{m,n}\{\xexit>0\} >\tfrac12 \, \bigr]
\le   \P\bigl[  \, Q^{*, \wt\w}_{\bar m,\bar n}\{\dxexit> c_2r M^{2/3}\}
  >\tfrac12 \, \bigr]\\
&= \P\bigl[  \, Q^{\wt\w}_{\bar m,\bar n}\{\xexit> c_2r M^{2/3}\}  
> \tfrac12\, \bigr]
\le Cr^{-3}. 
\end{aligned} \label{lbauxb11}\ee

For probability \eqref{lbprobb11} we 
rewrite the event in terms  of mean zero i.i.d's.   Compute the mean: 
\[  \E (\log \wt A_i-\log A_i) = \digamf(\lambda)-\digamf(\theta) \le r_1N^{-1/3} \]
for a positive constant $r_1= \trigamf(\theta)r$.  Let 
\[  S_k =   \sum_{i=1}^{k} 
 (\log \wt A_i-\log A_i -\E\log \wt A_i+\E\log A_i ).
   \]
By Kolmogorov's  inequality,  
\begin{align}
\eqref{lbprobb11} &\le 
\P\Bigl[ \; \sup_{0\le k\le u} S_k 
\ge bN^{1/3} -  r_1aN^{1/3} +
\log\frac{s A_0}{2aN^{2/3}}   \, \Bigr]\nn\\
&\le \P\Bigl[ \; \sup_{0\le k\le u} S_k 
\ge bN^{1/3} -  r_1aN^{1/3} +
\log\frac{s b_1}{2aN^{2/3}} \, \Bigr]  +\P(A_0<b_1) \nn\\
&\le\frac{ \E(S_u^2)}
{\bigl(bN^{1/3} -  r_1aN^{1/3} +
\log\frac{s b_1}{2aN^{2/3}}\bigr)^2}   
+ \int_0^{b_1} \frac{x^{\theta-1} e^{-x}}{\Gamma(\theta)}\,dx\nn\\
&\le  \frac{C a}
{\bigl(b -  r_1a +
N^{-1/3}\log\frac{s b_1}{2aN^{2/3}}\bigr)^2}   
+  Cb_1^\theta, 
\nn\end{align}
assuming that the quantity inside the parenthesis in the
denominator is positive. 
Collecting the bounds from \eqref{lbauxb11} 
and above  we have, provided \eqref{lbauxb10.5} holds,  
\begin{align}
& \P\biggl[  \; \frac{ Z_{m,n}(0<\xexit\le aN^{2/3})}{ \Zalt_{(1,1),(m,n)}} 
\ge c e^{bN^{1/3}}\,\biggr]   \nn\\
 &\quad \le 
\frac{C}{r^3} +  \frac{C a}
{\bigl(b -  r_1a +
N^{-1/3}\log\frac{s b_1}{2aN^{2/3}}\bigr)^2}   
+  Cb_1^\theta. \label{lbauxb15} 
\end{align}

We prove  statement (i) of the lemma.    Choose
  $r=(3C/\e)^{1/3}$ and 
 $b_1=(\e/(3C))^{1/\theta}$ for a large enough constant $C$. 
Then by assumption \eqref{lb-aux-condb},
\[
\varlimsup_{N\to\infty} \P\biggl[  \; \frac{ Z_{m,n}(0<\xexit\le aN^{2/3})}{ \Zalt_{(1,1),(m,n)}} 
\ge s e^{bN^{1/3}}\,\biggr] 
\le \frac{2\e}3 + \frac{Ca}{(b-r_1a)^2}\; \le\; \e. 
\]

\medskip

We turn to  
  statement (ii).      Fix $\e_0(\theta)>0$ small enough so that $\Psi_1(\theta)\e_0(\theta)<1/4$.
  % and $4C_0(\theta)\e_0(\theta)<\mu-\theta$.  
  Recall that   $r_1= \trigamf(\theta)r$.   Set  $a=\sqrt b$,   $r=\e_0(\theta)b^{1/2}$%$r=\sqrt b/(4\trigamf(\theta))$,
and $b_1=b^{-3/(2\theta)}$.  Then    $b\ge b_0(\theta, \mu, \gamma)$ guarantees that  $r\ge r_0(\theta, \mu, \gamma)$ as required   for \eqref{lbauxb11} above.   This and a large enough lower bound 
  $N\ge N_0(\theta, \mu, \gamma,  s)$ guarantee that  the long denominator on line 
 \eqref{lbauxb15}  is $\ge (b/2)^2$ and the entire bound becomes 
\be \P\biggl[  \; \frac{ Z_{m,n}(0<\xexit\le \sqrt bN^{2/3})}{ \Zalt_{(1,1),(m,n)}}  \biggr] 
\le   Cb^{-3/2} 
\label{lbauxb16}\ee   
which is exactly the goal \eqref{lbauxb10.1}.   
%With this choice of $r$,  \eqref{lbauxb10.5} also holds for $N\ge N_0(\theta, c)$.  

The arguments that brought us to this point are valid as long as the perturbed parameter $\lambda=\theta+rN^{-1/3}=\theta+\e_0(\theta)b^{1/2}N^{-1/3}$ satisfies $\lambda  \le (\theta+\mu)/2$ (that  is, stays bounded away and below $\mu$).   Hence   bound \eqref{lbauxb10.1} has been proved for 
\[%\be \label{b-cond5} 
b_0(\theta, \mu, \gamma)\le  b\le  \tfrac14\e_0(\theta)^{-2} (\mu-\theta)^2N^{2/3} \]   % \quad \text{ for $N\ge N_0(\theta, c)$. } \ee
and  $N\ge N_0(\theta, \mu, \gamma,  s)$.
 
%\smallskip

To finish the proof of   statement (ii) we give a separate argument for \eqref{lbauxb10.1} for the case 
$ b\ge \tfrac14 \e_0(\theta)^{-2} (\mu-\theta)^2N^{2/3}$. 
 Let $C_0(\theta)=\Psi_0(\mu)-\Psi_0(\theta)$.  
 Previously when $\e_0(\theta)$ was fixed, we can fix  it  
  small enough to guarantee  $ \e_0(\theta)^{-1} (\mu-\theta)\ge 8C_0(\theta)$.   Then we are in the case 
  \be\label{lbaux16.5} b\ge 16C_0(\theta)^2N^{2/3}. \ee
  
  We return to the beginning to treat   ratio \eqref{lb-ratio4} differently.   Transposing the first inequality of \eqref{Zcomp2}  gives the inequality 
\be \frac{Z_{m,n}(\xexit>0)}{Z_{m,n-1}(\xexit>0)} \le 
\frac{\Zalt_{(1,1), (m,n)}}{\Zalt_{(1,1),(m,n-1)}}
 % \label{Zcomp2} 
 \nn\ee
which, with the left-hand side partly expanded,  reads as   
\be \frac{\sum_{k=1}^m \bigl( \;\prod_{i=1}^k  U_{i,0} \bigr)\Zalt_{(k,1),(m,n)}}{\sum_{k=1}^m \bigl( \;\prod_{i=1}^k  U_{i,0} \bigr)\Zalt_{(k,1),(m,n-1)}} \le 
\frac{Z_{(1,1), (m,n)}}{Z_{(1,1),(m,n-1)}}.
 % \label{Zcomp2} 
 \nn\ee
This statement is a consequence of algebraic relations and hence valid for all positive weights. Consequently we can let $U_{i,0}\to 0$ for $i>u$ to obtain the inequality 
\be \frac{Z_{m,n}(0<\xexit\le u)}{Z_{m,n-1}(0<\xexit\le u)} \le 
\frac{\Zalt_{(1,1), (m,n)}}{\Zalt_{(1,1),(m,n-1)}}.
 % \label{Zcomp2} 
 \nn\ee
 Rewrite it,   iterate it to drive the $n$-coordinate all the way down to 1, and expand in terms of weights: 
 \begin{align*}
 &\frac{Z_{m,n}(0<\xexit\le u)}{\Zalt_{(1,1), (m,n)}} \le 
\frac{Z_{m,n-1}(0<\xexit\le u)}{\Zalt_{(1,1),(m,n-1)}} \le 
\frac{Z_{m,n-2}(0<\xexit\le u)}{\Zalt_{(1,1),(m,n-2)}} \\
&\qquad \le \dotsm \le 
\frac{Z_{m,1}(0<\xexit\le u)}{\Zalt_{(1,1),(m,1)}} 
=    \sum_{k=1}^u \;   U_{k,0} \prod_{i=1}^{k-1} \frac{U_{i,0}}{Y_{i,1}} 
=    \sum_{k=1}^u   e^{S_k} \\
 &\qquad \le   u \exp\Bigl[ \max_{1\le k\le u}S_k \Bigr]
 \end{align*}
where we defined  
 \[   S_k=\log  U_{k,0}+ \sum _{i=1}^{k-1}  (\log U_{i,0} -\log Y_{i,1}),\]  
 a sum of  independent terms with   mean   $\E S_k=\digamf(\theta)+(k-1)C_0(\theta)$.
 
With these preliminaries,   with   $u=\sqrt bN^{2/3}$, 
\begin{align*}
\text{probability in \eqref{lbauxb10.1}} &\le 
\P\Bigl[ \; \max_{0\le k\le u} S_k 
\ge bN^{1/3}   +    \log\frac{s }{u}   \, \Bigr] \\
%&\le  \P\Bigl[ \; \max_{0\le k\le u} S_k  \ge bN^{1/3}   +    \log{c  b_1}/{u}   \, \Bigr] + \P( Y_{k,1}^{-1}\le b_1) \\
&\le   \P\Bigl[ \; \max_{0\le k\le u} (S_k-\E S_k)   \ge \tfrac34 bN^{1/3}  -  (u-1)C_0(\theta) -\abs{\digamf(\mu)} \, \Bigr]   \\
&\le \frac{Cu}{(\tfrac14 bN^{1/3})^2 }  \le Cb^{-3/2}.   
\end{align*}
Above   $\abs{  \log{s}/{u}\, }\le \tfrac14 bN^{1/3}$ 
  for $N\ge N_2(s)$ because this forces  $b$ also large due to  \eqref{lbaux16.5}.  
 The second last inequality is  Kolmogorov's inequality   as was done above, together
with $uC_0(\theta)   \le \tfrac14 bN^{1/3}$ 
which is equivalent to   \eqref{lbaux16.5} and $ \abs{\digamf(\mu)} \le \tfrac14 bN^{1/3}$ which is true for large enough $N$. 
%or by  large deviations ($S_k$ is a sum of terms with an exponential moment so standard large deviation theory applies).  Either way,  
We have proved \eqref{lbauxb10.1} for the case $b\ge 16C_0(\theta)^2N^{2/3}$
and $N\ge N_0(\theta, \mu, \gamma, s)$.  
This concludes the proof of  Lemma \ref{lb-aux-lm3}. 
\end{proof}

%%%%% suttu ends 

Now apply part (i) of Lemma \ref{lb-aux-lm3} 
 with $a=\delta$ and $b=\eta$  to show 
\[  \varlimsup_{N\to\infty} 
 \P\biggl[\; \frac{ Z_{m,n}(0<\xexit\le \delta N^{2/3})}{ \Zalt_{(1,1),(m,n)}} 
>{\ttt\aaa}  \;\biggr] \le \e. \]
Step 1 already fixed $b=\eta>0$ small. Given $\e>0$, we 
can then take $a=\delta$ small enough to satisfy 
\eqref{lb-aux-condb}. 
 Shrinking
$\delta$ does not harm  the conclusion from Step 1 because
the bound in \eqref{lbaux4} becomes stronger.  This concludes
Step 2.  

\medskip

To summarize, we have shown that if $\delta$ is small enough, then
\[  \varlimsup_{N\to\infty}
  \P\bigl[\, Q(0< \xexit\le \delta N^{2/3}) >\ttt \, \bigr]\le 2\e. \]
This proves \eqref{lbaux1} and thereby Proposition \ref{lb-prop1}. 
\end{proof}

From  Proposition \ref{lb-prop1} we extract the lower bound on 
the variance of $\log Z_{m,n}$. 

\begin{corollary}  Assume weight distributions \eqref{distr4}
and rectangle dimensions \eqref{mnN}.  Then there exists a constant 
$c$ such that  for large enough $N$, 
$ \Vvv^\theta[\log Z_{m,n}] \ge cN^{2/3} $.  
\end{corollary}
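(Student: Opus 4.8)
The plan is to derive the lower bound from the two exit-point identities of Theorem~\ref{varxi-thm} together with Proposition~\ref{lb-prop1}. Averaging \eqref{var1} and \eqref{var1.1} cancels the deterministic terms $\pm\bigl(n\trigamf(\mu-\theta)-m\trigamf(\theta)\bigr)$ and leaves the exact formula
\be
\Vvv^\theta[\log Z_{m,n}] = \Eann\Bigl[\,\sum_{i=1}^{\xexit} \functaa(\theta, Y_{i,0}^{-1})\Bigr] + \Eann\Bigl[\,\sum_{j=1}^{\yexit} \functaa(\mu-\theta, Y_{0,j}^{-1})\Bigr],
\ee
in which both summands are nonnegative because $\functaa>0$. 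So it suffices to bound one of the two terms below by $cN^{2/3}$, namely the one corresponding to whichever of $\xexit$, $\yexit$ is typically macroscopic.

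First I would bound the $\xexit$-term. Fix a small $\delta>0$, put $u=\fl{\delta N^{2/3}}$, and write $A_i=Y_{i,0}^{-1}$, i.i.d.\ Gamma$(\theta,1)$ with $\E\functaa(\theta,A_i)=\trigamf(\theta)>0$. Since $\functaa\ge0$ and $k\mapsto\sum_{i=1}^k\functaa(\theta,A_i)$ is nondecreasing, dropping the terms with $\xexit<u$ and bounding $\xexit\ge u$ below by $u$ inside the inner sum gives
\[
\Eann\Bigl[\,\sum_{i=1}^{\xexit} \functaa(\theta, A_i)\Bigr] \ge \E\Bigl[\, Q^\om(\xexit\ge u)\sum_{i=1}^{u}\functaa(\theta,A_i)\Bigr].
\]
Then I would restrict to the $\bP$-event $G=\{\sum_{i=1}^u\functaa(\theta,A_i)\ge\tfrac12 u\trigamf(\theta)\}$, on which the inner sum is at least $\tfrac12 u\trigamf(\theta)$, and use the trivial bound $Q^\om(\xexit\ge u)\le1$ on the complement, to get a lower bound $\tfrac12 u\trigamf(\theta)\bigl(\Pann(\xexit\ge u)-\bP(G^c)\bigr)$. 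Because $\functaa(\theta,A_1)$ has a finite exponential moment near $0$ (this is \eqref{psimom}), the law of large numbers makes $\bP(G^c)\le\tfrac18$ for $N$ large, so with $c_0=\delta\trigamf(\theta)/4$ one obtains $\Eann[\sum_{i\le\xexit}\functaa(\theta,A_i)]\ge c_0N^{2/3}\bigl(\Pann(\xexit\ge u)-\tfrac18\bigr)$. The identical argument with $\theta$ replaced by $\mu-\theta$ and the $y$-axis weights gives $\Eann[\sum_{j\le\yexit}\functaa(\mu-\theta,Y_{0,j}^{-1})]\ge c_0'N^{2/3}\bigl(\Pann(\yexit\ge u)-\tfrac18\bigr)$ with $c_0'=\delta\trigamf(\mu-\theta)/4$.

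To close the argument I need $\Pann(\xexit\ge u)+\Pann(\yexit\ge u)$ bounded away from $0$. The first step of every path is $e_1$ or $e_2$, so $\Pann(\xexit\ge1)+\Pann(\yexit\ge1)=1$; combining this with Proposition~\ref{lb-prop1} applied to both $\xexit$ and $\yexit$, I may fix $\delta$ small and $N$ large enough that $\Pann(1\le\xexit\le u)+\Pann(1\le\yexit\le u)\le\tfrac12$, whence $\Pann(\xexit\ge u)+\Pann(\yexit\ge u)\ge\tfrac12$. Therefore one of the two probabilities is $\ge\tfrac14$, and in that case the matching estimate above forces $\Vvv^\theta[\log Z_{m,n}]\ge cN^{2/3}$ with $c=\tfrac18\min(c_0,c_0')$. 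I expect the only genuinely delicate point to be the decorrelation of $\xexit$ from the axis weights $A_i$ — resolved above by passing to the high-probability event $G$ and using $Q^\om\le1$ — together with keeping the choices of $\delta$ and the threshold $N_0$ consistent across the two symmetric estimates and Proposition~\ref{lb-prop1}; the remaining steps are routine.
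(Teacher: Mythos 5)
Your proposal is correct and follows essentially the same route as the paper: add (average) the two identities \eqref{var1} and \eqref{var1.1} to cancel the deterministic terms, use Proposition \ref{lb-prop1} together with the fact that exactly one of $\xexit,\yexit$ is positive to get $\Pann\{\xexit\ge\delta N^{2/3}\}\ge\tfrac14$ or $\Pann\{\yexit\ge\delta N^{2/3}\}\ge\tfrac14$, and then lower-bound the corresponding annealed sum by truncating at $u=\fl{\delta N^{2/3}}$ and controlling the deviation of $\sum_{i\le u}\functaa_i$ below its mean. The only differences (your event $G$ with $Q^\om\le1$ versus the paper's indicator product and exponential lower-tail bound) are cosmetic.
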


\begin{proof}  Adding equations \eqref{var1} and \eqref{var1.1} gives 
\[  \Vvv\bigl[\log Z_{m,n}\bigr] =   \Eann_{m,n}   \biggl[ \; \sum_{i=1}^\xexit   \functaa(\theta,  Y_{i,0}^{-1})   \biggr]
 +   \Eann_{m,n}  \biggl[ \; \sum_{j=1}^\yexit   
\functaa(\mu-\theta,  Y_{0,j}^{-1})   \biggr]. 
\] 
 Fix $\delta>0$ so that 
\[\Pann\{0<\xexit<\delta N^{2/3}\}+ \Pann\{0<\yexit<\delta N^{2/3}\}
<1/2 \]
for large $N$.  Then for a particular $N$ either 
$\Pann\{\xexit\ge\delta N^{2/3}\}\ge 1/4$ or
$\Pann\{\yexit\ge\delta N^{2/3}\}\ge 1/4$. 
Suppose it is $\xexit$. (Same argument for the other case.)
Abbreviate $\functaa_i=\functaa(\theta,  Y_{i,0}^{-1})$ and 
pick $a>0$ small enough so that for some constant $b>0$,
\[  \P\biggl[ \;\sum_{i=1}^{\fl{\delta N^{2/3}}} \functaa_i
<aN^{2/3} \;\biggr] \le e^{-bN^{2/3}} \quad\text{for $N\ge 1$.} \]
This is possible because $\{\functaa_i\}$ are 
strictly positive, i.i.d.\ random variables. 

It suffices now to prove that  for large $N$, 
\[ \Eann\biggl[ \;\sum_{i=1}^{\xexit} \functaa_i
\,\biggr]\ge \frac{a}8 N^{2/3}.  \]
This follows now readily: 
\begin{align*}
&\Eann\biggl[ \;\sum_{i=1}^{\xexit} \functaa_i
\,\biggr]\ge  \Eann\biggl[ \; \ind\{\xexit\ge\delta N^{2/3}\}
 \sum_{i=1}^{\fl{\delta N^{2/3}}} \functaa_i
\,\biggr]\\
&\ge aN^{2/3} \cdot  \Pann\biggl\{ \xexit\ge\delta N^{2/3} \,, \,
 \sum_{i=1}^{\fl{\delta N^{2/3}}} \functaa_i
\ge aN^{2/3} \biggr\} \\
&\ge aN^{2/3} \bigl(\tfrac14 -  e^{-bN^{2/3}} \bigr) 
\ge \frac{a}8 N^{2/3}. \qedhere
\end{align*}
\end{proof}

The corollary above concludes the proof of  Theorem \ref{var-bd-thm}.

\section{Fluctuations of the path in the model with boundaries} 
\label{sec:path-bd}
Fix two rectangles $\Lambda_{(k,\ell),(m,n)}\subseteq \Lambda_{(k_0,\ell_0),(m,n)}$,
with $0\le k_0\le k\le m$ and  $0\le \ell_0\le\ell\le n$.  As before define the 
partition function  $Z_{(k_0,\ell_0), (m,n)}$ and quenched polymer measure 
$Q_{(k_0,\ell_0), (m,n)}$ in the larger rectangle.  In the smaller rectangle 
$\Lambda_{(k,\ell),(m,n)} $
impose boundary conditions on the south and west boundaries,
given by the quantities $\{U_{i,\ell}, V_{k,j}: i\in\{k+1,\dotsc,m\},\, j\in\{ \ell+1,\dotsc, n\}\}$ 
computed in the larger rectangle as in \eqref{Z2}:
\be   U_{i,\ell}=\frac{Z_{(k_0,\ell_0), (i,\ell)}}{Z_{(k_0,\ell_0), (i-1,\ell)}}
\quad\text{and}\quad    V_{k,j}=\frac{Z_{(k_0,\ell_0), (k,j)}}{Z_{(k_0,\ell_0), (k,j-1)}}. 
\label{UVkl}\ee
Let $Z^{(k,\ell)}_{m,n}$ and $Q^{(k,\ell)}_{m,n}$ denote the partition function and 
quenched polymer measure in $\Lambda_{(k,\ell),(m,n)}$ under these boundary
conditions.  Then 
\be\begin{aligned}
 Z^{(k,\ell)}_{m,n} &=\sum_{s=k+1}^m\biggl(\, \prod_{i=k+1}^s U_{i,\ell} \biggr)\Zalt_{(s,\ell+1),(m,n)}
+ \sum_{t=\ell+1}^n\biggl(\, \prod_{j=\ell+1}^t V_{k,j} \biggr) \Zalt_{(k+1, t),(m,n)} \\
&= \frac{Z_{(k_0,\ell_0), (m,n)}}{Z_{(k_0,\ell_0), (k,\ell)}}.
\end{aligned}\label{Zkl1}\ee
For a  path  
$x_\centerdot \in\Pi_{(k,\ell),(m,n)}$ with  $x_1=(k+1,\ell)$, in other
words  $x_\centerdot$ takes off horizontally, 
\[  Q^{(k,\ell)}_{m,n}(x_\centerdot)  = \frac1{Z^{(k,\ell)}_{m,n}} 
\prod_{i=1}^{\xexitk{k}{\ell}} U_{k+i,\ell} \; \cdot \prod_{i=\xexitk{k}{\ell}+1}^{m-k+n-\ell} 
Y_{x_i}. \]
We wrote $\xexitk{k}{\ell}$ for the distance $x_\centerdot$ travels on the $x$-axis 
from the perspective of the new origin $(k,\ell)$: for $x_\centerdot \in\Pi_{(k,\ell),(m,n)}$
\be  \xexitk{k}{\ell}=
\max\{ r\ge 0:  \text{$x_i=(k+i,\ell)$ for $0\le i\le r$} \}.  \label{defxixk}\ee

Consider the distribution of $\xexitk{k}{\ell}$ under $Q^{(k,\ell)}_{m,n}$: 
adding up all the possible path segments  from $(k+r,\ell+1)$ to $(m,n)$ 
and utilizing \eqref{UVkl} and  \eqref{Zkl1}  gives
\be\begin{aligned}
&Q^{(k,\ell)}_{m,n}\{\xexitk{k}{\ell}=r\}   = \frac1{Z^{(k,\ell)}_{m,n}} 
\biggl(\, \prod_{i=k+1}^{k+r} U_{i,\ell} \biggr)\Zalt_{(k+r,\,\ell+1),(m,n)} \\
&\qquad =  \frac{Z_{(k_0,\ell_0), (k+r,\ell)} \Zalt_{(k+r,\,\ell+1),(m,n)} } {Z_{(k_0,\ell_0), (m,n)}}\\[3pt]
&\qquad =Q_{(k_0,\ell_0), (m,n)}\{ \text{$x_\centerdot$ goes through $(k+r,\ell)$ and 
 $(k+r,\,\ell+1)$} \}\\[3pt]
&\qquad =Q_{(k_0,\ell_0), (m,n)}\{ \vvb(\ell)=k+r\}.  
\end{aligned}\label{Zkl4}\ee
Thus $\xexitk{k}{\ell}$ under $Q^{(k,\ell)}_{m,n}$ has the same distribution as
$\vvb(\ell)-k$ under $Q_{(k_0,\ell_0), (m,n)}$.  We can now give the proof of 
Theorem \ref{path-bd-thm}. 

\begin{proof}[Proof of Theorem \ref{path-bd-thm}]
 If $\tau=0$ then the results 
are already contained in Corollary \ref{coraux3}
 and Proposition \ref{lb-prop1}.  Let us assume $0<\tau<1$. 

Set $u=\fl{bN^{2/3}}$.
Take $(k_0,\ell_0)=(0,0)$ and $(k,\ell)=(\fl{\tau m},\fl{\tau n})$ above. 
The system in the smaller rectangle $\Lambda_{(k,\ell),(m,n)}$ is   a  
system with boundary  distributions  \eqref{distr4} and dimensions
$(m-k,n-\ell)$ that satisfy  \eqref{mnN}
 for a new scaling parameter $(1-\tau)N$.
By  \eqref{Zkl4}, 
\be\begin{aligned}
  Q_{m,n}\{  \vvb(\fl{\tau n})\ge  \fl{\tau m}+u \}
&= Q^{(k,\ell)}_{m,n}\{\xexitk{k}{\ell} \ge u \} \\
&\overset{d}= Q_{m-k,n-\ell}\{\xexit \ge u \}. 
\end{aligned} \label{Zkl6}\ee
  Hence   bounds \eqref{aux8.6} and 
\eqref{case2c}  of Lemma \ref{auxlm7}  are valid as they stand for the 
quenched probability above. 
The part of  \eqref{path-bd-1} that pertains to 
  $\vvb(\fl{\tau n})$ now follows from Corollary \ref{coraux3}.  

Recall definition \eqref{defw1} of $\wwb$.  
To get control of the left tail of $\vva$, first note the
implication 
\begin{align*}
Q_{m,n}\{  \vva(\fl{\tau n})< \fl{\tau m}-u \} \le Q_{m,n}\{  \wwb(\fl{\tau m}-u)\ge \fl{\tau n} \}. 
\end{align*}
Let $ k=\fl{\tau m}-u$ and $\ell=\fl{\tau n}-\fl{nu/m}$.  
Then up to integer-part
corrections, $ k/\ell=m/n$.  For a constant $C(\theta)>0$,  
$\fl{\tau n}\ge \ell+C(\theta)bN^{2/3}$.
By \eqref{Zkl4}, applied to the vertical counterpart $\wwb$ of $\vvb$, 
\begin{align*}
Q_{m,n}\{  \wwb(\fl{\tau m}-u)\ge \fl{\tau n} \}
= Q^{(k,\ell)}_{m,n}\{\yexitk{k}{\ell}\ge b_1N^{2/3}\}
 \overset{d}= Q_{m-k,n-\ell}\{\yexit\ge C(\theta)bN^{2/3}\}. 
\end{align*}
The part of  \eqref{path-bd-1} that pertains to 
  $\vva(\fl{\tau n})$ now follows from Corollary \ref{coraux3},
applied to $\yexit$.  

\medskip

Last we   prove \eqref{path-bd-2}.   
  By a calculation similar to \eqref{Zkl4},    the event of passing through
a given edge at least one of whose endpoints lies in the interior  of $\Lambda_{(k,\ell),(m,n)}$ 
has the same probability under $Q^{(k,\ell)}_{m,n}$ and under $Q_{m,n}$. 
 Put   $(k,\ell)
=(\fl{\tau m}-2\fl{\delta N^{2/3}}, \, \fl{\tau n}-2\fl{c\delta N^{2/3}})$
 where the constant $c$ is picked so that 
$c>m/n$ for large enough $N$. If the path $x_\centerdot$ 
comes within distance   $\delta N^{2/3}$ of $(\tau m, \tau n)$, then 
it necessarily enters the rectangle 
$\Lambda_{(k+1,\ell+1), (k+4\fl{\delta N^{2/3}}  ,\,    \ell+ 4\fl{c\delta N^{2/3}})}$  
 through the south or the west side.
This event of entering  decomposes into a disjoint union according to the
unique edge that is used to enter the rectangle, and consequently the
probabilities under $Q^{(k,\ell)}_{m,n}$ and $Q_{m,n}$ are again the same. 
From the perspective of the polymer model $Q^{(k,\ell)}_{m,n}$, this event 
implies that either $0<\xexitk{k}{\ell}\le 4\delta N^{2/3}$ or 
$0<\yexitk{k}{\ell}\le 4c\delta N^{2/3}$. 
The following bound arises:
\begin{align*}
&Q_{m,n}\{ 
\text{ $\exists k$ such that $\abs{\,x_k-(\tau m, \tau n)} \le \delta N^{2/3}$ } \} \\[3pt]
&\le Q^{(k,\ell)}_{m,n}\bigl\{ \text{$0<\xexitk{k}{\ell}\le 4\delta N^{2/3}$ or 
$0<\yexitk{k}{\ell}\le 4c\delta N^{2/3}$}  \bigr\} \\[3pt]
&\overset{d}= Q_{m-k,n-\ell} \bigl\{ \text{$0<\xexit\le 4\delta N^{2/3}$ or 
$0<\yexit\le 4c\delta N^{2/3}$}  \bigr\}.  
\end{align*} 
  Proposition \ref{lb-prop1} now gives \eqref{path-bd-2}.
\end{proof}

\section{Polymer with fixed endpoint but without boundaries}
\label{sec:nobd} 
 Throughout this section, for  given 
 $0<s,t<\infty$, let $\theta=\theta_{s,t}$ as determined by \eqref{trig-st} and
 $(m,n)$ satisfy \eqref{mnNgamma}.   Up to corrections from integer parts, \eqref{ElogZ} and definition \eqref{def-free} give
\[  N\freee_{s,t}(\mu) = \E\log Z_{\fl{Ns}, \fl{Nt}}.   \]
Define the scaling parameter $M$ by 
\be  M=\frac{Ns}{\trigamf(\mu-\theta)} =\frac{Nt}{\trigamf(\theta)} . \label{defMN}\ee
Then 
$(Ns,Nt)=(M\trigamf(\mu-\theta), M\trigamf(\theta))$ is the   characteristic
direction for  parameters $M$ and $\theta$. 

\begin{lemma}  Let $\P$ satisfy assumption \eqref{distr4} and $(m,n)$ satisfy
\eqref{mnNgamma}. 
 There exist finite constants $N_0, C, C_0$ such 
that, for  $b\ge C_0$ and $N\ge N_0$, 
\[
\P\bigl[  \; \abs{ \log { Z_{m,n}} -\log \Zalt_{(1,1),(m,n)} }\ge bN^{1/3} \,\bigr] 
\le C b^{-3/2}. \]  
\label{ZZlm1} \end{lemma}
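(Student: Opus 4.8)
The plan is to bound the upper and lower tails of $\log Z_{m,n}-\log\Zalt_{(1,1),(m,n)}$ separately. The lower tail is immediate: the paths with $\xexit=1$ contribute exactly $U_{1,0}\,\Zalt_{(1,1),(m,n)}$ to $Z_{m,n}$, so $Z_{m,n}\ge U_{1,0}\,\Zalt_{(1,1),(m,n)}$, and hence $\{\log\Zalt_{(1,1),(m,n)}-\log Z_{m,n}\ge bN^{1/3}\}\subseteq\{U_{1,0}^{-1}\ge e^{bN^{1/3}}\}$. Since $U_{1,0}^{-1}\sim\mathrm{Gamma}(\theta,1)$, the latter event has probability at most $C\exp(-\tfrac12 e^{bN^{1/3}})$, far below $b^{-3/2}$. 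So the content of the lemma is the bound $\P[\log Z_{m,n}-\log\Zalt_{(1,1),(m,n)}\ge bN^{1/3}]\le Cb^{-3/2}$.

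For the upper tail I would run all estimates with the characteristic scaling parameter $M$ of \eqref{defMN} in place of $N$: $M$ and $N$ are comparable and $(m,n)$ has characteristic shape for $(M,\theta)$, so Lemma \ref{auxlm7} and Lemma \ref{lb-aux-lm3} apply with $M$ for $N$ after trivial adjustment of constants. Fix $u_1$ and $v_1$ to be suitable constant multiples of $\sqrt b\,N^{2/3}$, chosen so that in $M$-units one has $u_1^2/M=v_1^2/M=bN^{1/3}$ and the target exponentials of Lemma \ref{lb-aux-lm3}(ii) become $e^{bN^{1/3}}$. Write $R:=Z_{m,n}/\Zalt_{(1,1),(m,n)}$. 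Splitting every path by its exit point, using $Z_{m,n}(\xexit=1)=U_{1,0}\Zalt_{(1,1),(m,n)}$, $Z_{m,n}(\yexit=1)=V_{0,1}\Zalt_{(1,1),(m,n)}$, and $Z_{m,n}(\xexit>u_1)=Q^\om_{m,n}\{\xexit>u_1\}\,Z_{m,n}$, yields the exact identity
\[
R=U_{1,0}+V_{0,1}+\frac{Z_{m,n}(2\le\xexit\le u_1)}{\Zalt_{(1,1),(m,n)}}+\frac{Z_{m,n}(2\le\yexit\le v_1)}{\Zalt_{(1,1),(m,n)}}+\bigl(Q^\om_{m,n}\{\xexit>u_1\}+Q^\om_{m,n}\{\yexit>v_1\}\bigr)R .
\]

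The delicate point, and the step I expect to be the main obstacle, is the last bracket: comparing $Z_{m,n}(\xexit>u_1)$ directly with $\Zalt_{(1,1),(m,n)}$ through the monotonicity of Lemma \ref{monolm} (even after a parameter shift) loses a factor exponential in $N$, because the relevant product of independent weights has strictly positive logarithmic drift over the long range $u_1<k\le m$. The remedy is to read the bracket as a self-improvement of the very inequality being proved. By Lemma \ref{auxlm7} with parameter $M$ --- its bound \eqref{aux8.6} when $u_1\le\delta M$, its bound \eqref{case2c} when $u_1>\delta M$, and nothing to do when $u_1>m$ since then $Z_{m,n}(\xexit>u_1)=0$ --- one has $Q^\om_{m,n}\{\xexit>u_1\}\le\tfrac14$ and $Q^\om_{m,n}\{\yexit>v_1\}\le\tfrac14$ off an event of probability at most $Cb^{-3/2}$, provided $b\ge C_0$ and $N\ge N_0$. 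On that event the identity solves to
\[
R=\frac{U_{1,0}+V_{0,1}+Z_{m,n}(2\le\xexit\le u_1)/\Zalt_{(1,1),(m,n)}+Z_{m,n}(2\le\yexit\le v_1)/\Zalt_{(1,1),(m,n)}}{1-Q^\om_{m,n}\{\xexit>u_1\}-Q^\om_{m,n}\{\yexit>v_1\}}\le 2\Bigl(U_{1,0}+V_{0,1}+\frac{Z_{m,n}(0<\xexit\le u_1)}{\Zalt_{(1,1),(m,n)}}+\frac{Z_{m,n}(0<\yexit\le v_1)}{\Zalt_{(1,1),(m,n)}}\Bigr).
\]

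It then remains to bound the four terms on the right. The variables $U_{1,0}$ and $V_{0,1}$ exceed $\tfrac18 e^{bN^{1/3}}$ only on sets of super-exponentially small probability (gamma tails). For the two ratios I invoke Lemma \ref{lb-aux-lm3}(ii) with parameter $M$, the matching choice of $u_1$ (resp. $v_1$) and constant $c=\tfrac18$, together with its transpose across the diagonal for the $\yexit$ statement (which amounts to replacing $\theta$ by $\mu-\theta$); each ratio is then at most $\tfrac18 e^{bN^{1/3}}$ off a set of probability $\le Cb^{-3/2}$. Intersecting these events with the one above gives $R\le e^{bN^{1/3}}$ off a set of probability $\le Cb^{-3/2}$, i.e.\ $\log Z_{m,n}-\log\Zalt_{(1,1),(m,n)}\le bN^{1/3}$ with the asserted probability; combined with the lower tail this proves the lemma. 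All constants depend only on $(s,t,\mu,\gamma)$ and stay bounded on compacts, and the usual reparametrization of $b$ and enlargement of $C_0,N_0$ absorb any stray numerical factors.
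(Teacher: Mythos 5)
Your proof is correct and is essentially the paper's argument: the lower tail via $Z_{m,n}\ge (U_{1,0}+\dotsb)\Zalt_{(1,1),(m,n)}$, and the upper tail by cutting the exit point at a scale $\asymp\sqrt b\,N^{2/3}$ (expressed in the characteristic parameter $M$ of \eqref{defMN}) and invoking Lemma \ref{auxlm7} for the quenched mass beyond the cutoff together with Lemma \ref{lb-aux-lm3}(ii) for the ratios $Z_{m,n}(0<\xexit\le u)/\Zalt_{(1,1),(m,n)}$ and its transpose. The only difference is bookkeeping: the paper divides by $Q_{m,n}(\{0<\xexit\le u\}\cup\{0<\yexit\le u\})$ and bounds the event that this is $\le\tfrac12$, whereas you solve the exact decomposition for the ratio on the event where the quenched tail masses are $\le\tfrac14$ — the same estimate in different clothing.
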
 

\begin{proof} 
Separating the paths that go through the point $(1,1)$ gives 
\be 
Z_{m,n}= (U_{1,0}+V_{0,1}) \Zalt_{(1,1),(m,n)}
+  Z_{m,n}(\xexit>1) +  Z_{m,n}(\yexit>1). \label{ZZaux1}\ee
Consequently 
\begin{align*}
&\P\biggl[  \; \frac{ Z_{m,n}}{ \Zalt_{(1,1),(m,n)}}
\le  e^{-bN^{1/3}}\,\biggr] \le 
\P( U_{1,0}+V_{0,1} \le  e^{-bN^{1/3}}) 
\le C(\theta) e^{-bN^{1/3}}. 
\end{align*}

For the other direction abbreviate  $u=\sqrt b \bigl(\trigamf(\theta)/t\bigr)^{1/6}   M^{2/3}$. 
\begin{align}  &\P\biggl[  \; \frac{ Z_{m,n}}{ \Zalt_{(1,1),(m,n)}} 
\ge  e^{bN^{1/3}}\,\biggr]\nn\\[4pt]
&=   \P\biggl[  \; 
\frac{ Z_{m,n}(\{0<\xexit\le u\}\cup\{0<\yexit\le u\}) }
{ \Zalt_{(1,1),(m,n)}\, Q_{m,n}(\{0<\xexit\le u\}\cup\{0<\yexit\le u\})} 
\ge  e^{bN^{1/3}}\,\biggr]\nn\\[5pt]
&\le   \P\biggl[  \; 
\frac{ Z_{m,n}(0<\xexit\le u) }
{ \Zalt_{(1,1),(m,n)}} 
\ge  \tfrac14 e^{bN^{1/3}}\,\biggr]
+  \P\biggl[  \; 
\frac{ Z_{m,n}(0<\yexit\le u) }
{ \Zalt_{(1,1),(m,n)}} 
\ge  \tfrac14 e^{bN^{1/3}}\,\biggr]
\label{Zprob3}\\[3pt]
&\qquad +\P\bigl[ \, Q_{m,n}(\{0<\xexit\le u\}\cup\{0<\yexit\le u\}) 
\le\tfrac12 \,\bigr]. \label{Zprob4}
\end{align}
By part (ii) of Lemma
\ref{lb-aux-lm3}, 
  line \eqref{Zprob3} is bounded by $Cb^{-3/2}$.   By Lemma 
 \ref{auxlm7} 
\[ \text{line \eqref{Zprob4}}\le   \P\bigl[ \, Q_{m,n}\{\xexit> u\}>\tfrac14\,\bigr]
+ \P\bigl[ \, Q_{m,n}\{\yexit> u\}>\tfrac14\,\bigr] \le  Cb^{-3/2} \]
provided $e^{-\delta b (\trigamf(\theta)/t)^{1/3}   M^{1/3}}\le 1/4$
and $u\ge c\kappa_M$. $M$ is now the scaling parameter and comparison 
of \eqref{mnN1}  and \eqref{mnNgamma} shows $\kappa_M=\gamma N^{2/3}$.  
The requirements  are satisfied  with  $N\ge N_0$
and  $b\ge C_0$.  
%The constant $\delta$ in Lemma  \ref{auxlm7}  is a function of $\theta$. 

 To summarize, we have for  $b\ge C_0$ and $N\ge N_0$, and for a 
 finite constant $C$, 
\be
\P\biggl[  \; \frac{ Z_{m,n}}{ \Zalt_{(1,1),(m,n)}} 
\ge  e^{bN^{1/3}}\,\biggr] \le C b^{-3/2}  \label{Zprob5}\ee
This furnishes the remaining part of the conclusion. 
\end{proof} 

\begin{proof}[Proof of Theorem \ref{Zno-bd-thm}]
By Chebyshev, variance bound \eqref{goal8.1}  
 and Lemma \ref{ZZlm1}, and with a little correction to
take care of the difference between $Z_{(1,1),(\fl{Ns},\fl{Nt})}$ and 
$\Zalt_{(1,1),(\fl{Ns},\fl{Nt})}$, 
\begin{align*}
&\P\bigl[  \; \abs{ \log Z_{(1,1),(\fl{Ns},\fl{Nt})} - N\freee_{s,t}(\mu) }\ge bN^{1/3} \,\bigr] 
\le \P( \,\abs{\log Y_{1,1}}\ge   \tfrac14bN^{1/3} ) \\[3pt]
&\qquad \qquad 
+  \P\bigl[  \; \abs{  \log \Zalt_{(1,1),(\fl{Ns},\fl{Nt})} -\log { Z_{\fl{Ns},\fl{Nt}}}}\ge \tfrac12bN^{1/3} \,\bigr] \\[3pt]
&\qquad \qquad 
+ \P\bigl[  \; \abs{ \log Z_{\fl{Ns},\fl{Nt}}  - N\freee_{s,t}(\mu) }\ge \tfrac14bN^{1/3} \,\bigr] \\[3pt]
&\quad \le  Ce^{-\frac14bN^{1/3}} + Cb^{-3/2} + Cb^{-2} \le Cb^{-3/2}.  
\end{align*}
This bound implies convergence in probability in \eqref{Zlln}. 
One can apply the subadditive ergodic theorem to upgrade the 
statement to a.s.\ convergence. We omit the details. 
\end{proof}

\begin{proof}[Proof of Theorem \ref{path-nobd-thm}]
Let  $(k,\ell)=(\fl{\tau m},\fl{\tau n})$ and 
$u={bN^{2/3}} =b(\trigamf(\theta)/t)^{2/3}M^{2/3}$. 
Decompose the event $\{ \vvb(\ell)\ge k+u\}$ according to the vertical
edge  $\{ (i,\ell), (i,\ell+1)\}$, $k+u\le i\le m$, taken by the path, 
and utilize \eqref{ZZaux1}: 
\begin{align*}
&Q_{(1,1),(m,n)}\{\vvb(\ell)\ge k+u\}
=\sum_{i:k+u\le i\le m} \frac{ \Zalt_{(1,1),(i,\ell)} \Zalt_{(i,\ell+1),(m,n)}}{ \Zalt_{(1,1),(m,n)}}\\
&\le \sum_{i:k+u\le i\le m}
\frac{Z_{i,\ell} \Zalt_{(i,\ell+1),(m,n)}}{ (U_{1,0}+V_{0,1}) \Zalt_{(1,1),(m,n)}}
= \frac{ Q_{m,n}\{\vvb(\ell)\ge k+u\} }{ U_{1,0}+V_{0,1}} \cdot 
\frac{ Z_{ m,n}}{  \Zalt_{(1,1),(m,n)}}. 
\end{align*}  
As explained  in the paragraph of \eqref{Zkl6} above, 
$Q_{m,n}\{\vvb(\ell)\ge k+u\}\overset{d}=Q_{m-k,n-\ell}\{\xexit\ge u\}$.  
Let $b^{-3}<h<1$.  From above, remembering \eqref{defMN}, 
\begin{align*}
& \P \bigl[  Q_{(1,1),(m,n)}\{\vvb(\ell)\ge k+u\}>h  \bigr]  
\; \le \;  \P( U_{1,0}+V_{0,1} \le b^{-3}) \\[5pt]
 &\qquad \qquad + \; \P\biggl[  \; \frac{ Z_{m,n}}{ \Zalt_{(1,1),(m,n)}} 
\ge  \exp\Bigl( \frac{\delta b^2 \trigamf(\theta) N^{1/3}}{2(1-\tau)t}\Bigr) \,\biggr] \\[4pt]
&\qquad + \;  \P \Bigl[ Q_{m-k,n-\ell}\{\xexit\ge u\}  >hb^{-3} 
\exp\bigl( -\tfrac12\delta u^2/(1-\tau)M \bigr)  \Bigr] \\
&\qquad \le Cb^{-3}.  
\end{align*}
The justification for the last inequality is as follows.  
With a new scaling parameter $(1-\tau)M$, bound \eqref{aux8.6}  applies
to the last probability above and bounds it by $Cb^{-3}$ for all
 $h>b^{-3}$ and $b\ge 1$,    provided $N\ge N_0$.
Apply \eqref{Zprob5} to the second last probability,  valid
if  $ b\ge C_0$ and 
$N\ge N_0$.
We obtain
\begin{align*}
\Pann_{(1,1),(m,n)}\{\vvb(\ell)\ge k+u\} &\le b^{-3} 
+ \int_{b^{-3}}^1  \P \bigl[  Q_{(1,1),(m,n)}\{\vvb(\ell)\ge k+u\}>h  \bigr] \,dh \\
&\le Cb^{-3}. 
\end{align*}

The corresponding bound from below on $\vva(\ell)$ comes by reversal.
If $\wt Y_{i,j}=Y_{m-i+1,n-j+1}$ for $(i,j)\in\Lambda_{(1,1),(m,n)}$, then 
$Q^{\wt\w}_{(1,1),(m,n)}(x_\centerdot)=Q^{\w}_{(1,1),(m,n)}(\wt x_\centerdot)$
where $\wt x_{j}=(m+1,n+1)-x_{m+n-2-j}$ for $0\le j\le m+n-2$. 
This mapping of paths has the property 
$\vva(\ell,x_\centerdot)-k=m+1-k-\vvb(n+1-\ell, \wt x_\centerdot)$, and it converts an 
upper  bound on $\vvb$ into a lower   bound on $\vva$.  
\end{proof}

%\section{Polymer with free endpoint }
\section{Point-to-line polymer}
\label{sec:tot} 

In this final section we prove Theorems \ref{tot-f-thm} and \ref{thm-Ztotbd}, beginning with 
the three parts of  Theorem \ref{tot-f-thm}. 

\begin{proof}[Proof of limit \eqref{Ztotlln}]
The claimed limit is the maximum over directions in the first quadrant: 
\[ -\digamf(\mu/2)=f_{1/2,1/2}(\mu)\ge f_{s,1-s}(\mu)
\quad\text{for $0\le s\le 1$.} \]
  One bound for  the limit comes from  
$Z_N^{\text{\rm p2l}}\ge  Z_{(1,1),(\fl{N/2}, N-\fl{N/2})}$.
To bound $\log  Z_N^{\text{\rm p2l}}$ from above, fix $K\in\bN$ 
and let $\delta=1/K$.  For $1\le k\le K$ set
$(s_k,t_k)=(k\delta, (K-k+1)\delta)$.  Partition the
indices $m\in\{1,\dotsc,N-1\}$ into sets 
\[ I_k=\{ m\in\{1,\dotsc,N-1\}:  (m,N-m)\in\Lambda_{\fl{Ns_k}, \fl{Nt_k}} \}. \]
The $\{I_k\}$ cover the entire set of $m$'s because
$N(k-1)\delta\le m\le Nk\delta$ implies $m\in I_k$. Overlap among the
$I_k$'s is not harmful.   
\begin{align*}
 Z_N^{\text{\rm p2l}} &\le 
\sum_{k=1}^K \sum_{m\in I_k} Z_{(1,1),(m, N-m)} 
\frac{Z_{(m, N-m), (\fl{Ns_k}, \fl{Nt_k})}} {Z_{(m, N-m), (\fl{Ns_k}, \fl{Nt_k})}}\\
 &\le 
 \Bigl\{\min_{1\le k\le K, \, m\in I_k}Z_{(m, N-m), (\fl{Ns_k}, \fl{Nt_k})}\Big\}^{-1}
 \sum_{k=1}^K  Z_{(1,1),  (\fl{Ns_k}, \fl{Nt_k})} . 
\end{align*} 
For each $m\in I_k$ fix a specific path 
$x^{(m)}_\centerdot \in\Pi_{(m, N-m), (\fl{Ns_k}, \fl{Nt_k})}$. Since 
\[ Z_{(m, N-m), (\fl{Ns_k}, \fl{Nt_k})} \ge  \prod_{i=1}^{\fl{Ns_k}+ \fl{Nt_k}-N} Y_{x^{(m)}_i}\,, \]
we get the bound  
\be  \begin{aligned}
N^{-1} \log  Z_N^{\text{\rm p2l}} \;  &\le \; 
\max_{1\le k\le K, \, m\in I_k}   N^{-1} \sum_i  \log Y_{x^{(m)}_i}^{-1}  \; + \; N^{-1} \log K   \\
&\qquad  +\; \max_{1\le k\le K} N^{-1} \log  Z_{(1,1),  (\fl{Ns_k}, \fl{Nt_k})} .
  \end{aligned} \label{Ztot5}\ee
The sum $\sum_i  \log Y_{x^{(m)}_i}^{-1} $ has $\fl{Ns_k}+ \fl{Nt_k}-N\le  N\delta $ i.i.d.\ terms. 
Given $\e>0$,  we can choose $\delta=K^{-1}$ small enough to guarantee that 
    $\P\{\sum_i  \log Y_{x^{(m)}_i}^{-1}\ge N\e\}$ decays exponentially with $N$.
Thus $\P$-a.s.\ the entire first term after the inequality in \eqref{Ztot5} is $\le \e$ for large $N$.  
In the limit we get, utilizing law of large numbers \eqref{Zlln}, 
\[  \varlimsup_{N\to\infty}  N^{-1} \log  Z_N^{\text{\rm p2l}} \le  \e + 
\max_{1\le k\le K} \freee_{s_k, t_k}(\mu) \le  \e+  \sup_{0\le s\le 1} \freee_{s,1-s+\delta}(\mu). \]
Let $\delta\searrow 0$ utilizing the  continuity of $\freee_{s,t}(\mu)$ in $(s,t)$, and 
then let $\e\searrow 0$.  This gives $\varlimsup  N^{-1} \log  Z_N^{\text{\rm p2l}} \le
-\digamf(\mu/2)$ and completes the proof of the limit \eqref{Ztotlln}. 
\end{proof}

\begin{proof}[Proof of bound \eqref{fa-4}]  
Let 
\be (m, n)=( N-\fl{N/2}, \fl{N/2} ). \label{fa-2}\ee
 An upper bound on the left tail in \eqref{fa-4} comes immediately 
from \eqref{Zmom2.9}: 
\begin{align*}
\bP\bigl\{  \log  Z_N^{\text{\rm p2l}}  \le  N\freee_{1/2,1/2}(\mu) - bN^{1/3}  \bigr\}
&\le 
\bP\bigl\{  \log  Z_{(1,1),(m, n)}   \le  N\freee_{1/2,1/2}(\mu) - bN^{1/3}  \bigr\}\\
&\le Cb^{-3/2}.
\end{align*}

%%%%%%%  suttu section starts here 

The  bound on the right tail is proved in two parts.  We start with the easy case.

\medskip 

\noindent 
{\bf Case 1.}  Assume $b\ge c_0N^{2/3}$ for some constant $c_0>0$.  

\medskip

Continue with $(m,n)$ as in \eqref{fa-2} and 
let $\theta=\mu/2$ be the boundary parameter for  partition functions $Z_{m,n}$.  Since we have the fluctuation bounds for $Z_{m,n}$ and 
$\E(\log Z_{m,n})=-N\digamf(\mu/2)=Nf_{1/2,1/2}(\mu)$, it suffices to prove that 
\be\label{dd-goal1}
\P\Bigl[  \; \frac{Z_N^{\text{\rm p2l}} } {Z_{m,n}}  \ge e^{bN^{1/3}} \,\Bigr] 
\le C b^{-3/2}.
\ee
Using   ratio variables, 
\begin{align}
 \frac{Z_N^{\text{\rm p2l}} } {Z_{m,n}} &= \sum_{\ell=1}^{N-1}\frac{ Z_{(1,1),(\ell, N-\ell)}} {Z_{m,n}} \nn\\
 &=   \sum_{\ell=1}^{n}\frac{ Z_{(1,1),(\ell, N-\ell)}} {Z_{\ell, N-\ell}} 
  \prod_{i=\ell}^{m-1} \frac{V_{i,N-i}}{U_{i+1, N-i-1}}
\; + \; \sum_{\ell=m\vee(n+1)}^{N-1}\!\!\! \frac{ Z_{(1,1),(\ell, N-\ell)}} {Z_{\ell, N-\ell}} 
  \prod_{i=m+1}^{\ell} \!\!\frac{U_{i,N-i}}{V_{i-1, N-i+1}}\nn \\
 &\le   \frac1{U_{1,0}Y_{1,1}}  \sum_{\ell=1}^{n}
  \prod_{i=\ell}^{m-1} \frac{V_{i,N-i}}{U_{i+1, N-i-1}}
\; + \;  \frac1{U_{1,0}Y_{1,1}}  \sum_{\ell=m}^{N-1}
  \prod_{i=m+1}^{\ell} \frac{U_{i,N-i}}{V_{i-1, N-i+1}}  .  \label{dd:row9} 
\end{align}
In the last step we used \eqref{ZZaux1}.   The two terms on the last line above are similar so let us see how to handle the first one.    
 By the Burke property (Theorem \ref{burkethm}) and because now $\theta=\mu-\theta$,  the $V$ and $U$ variables  in the products are i.i.d. With a simplifying change of indices  we can rewrite the first  term as 
\begin{align*}
 \frac1{U_{1,0}Y_{1,1}}  \sum_{\ell=1}^{n}
  \prod_{i=\ell}^{m-1} \frac{V_{i,N-i}}{U_{i+1, N-i-1}} 
  \le    \frac1{U_{1,0}Y_{1,1}}  \sum_{k=0}^{m-1} e^{S_k}  
\end{align*} 
where  $S_k=\sum_{i=1}^k \xi_i$ is a sum of mean zero i.i.d.\ terms. 
%   $\xi_i=\exp(\log V_{m-i,N-m+i} -\log U_{m-i+1, N-m+i-1}) $.  
 (The only reason there is an inequality above is that if $m>n$ then we have introduced an extra $k=0$ term.)    The desired bound comes from applying Kolmogorov's inequality to the random walk $S_k$.  
\begin{align*}
&\P\Bigl\{ \;   \frac1{U_{1,0}Y_{1,1}}  \sum_{\ell=1}^{n}
  \prod_{i=\ell}^{m-1} \frac{V_{i,N-i}}{U_{i+1, N-i-1}}      \ge e^{bN^{1/3}} \,\Bigr\} \\
  &\le  \P\Bigl\{  \; \max_{0\le k<m} S_k 
\ge bN^{1/3}   -  \log N + \log U_{1,0} +\log Y_{1,1}  \Bigr\}\\
&\le  \P\Bigl\{  \; \max_{0\le k<m} S_k 
\ge \tfrac12bN^{1/3}      \Bigr\}   +  \P( U_{1,0}^{-1}>b\,)  +  \P( Y_{1,1}^{-1}>b\,) \\
&\le  \frac{Cm}{(bN^{1/3})^2}  + Ce^{-cb}  \le  Cb^{-3/2}.  
\end{align*}
The  steps above  came from  Kolmogorov's inequality,  $m\le N\le Cb^{3/2}$, lower tail bounds for gamma variables, and from taking $N$ large enough and $b\ge 1$.  
%In the previous step the $\log$ term is subsumed in $\tfrac12bN^{1/3}$ by taking $N$ large enough, which also forces $b$ large. 

\medskip
\noindent
{\bf Case 2.}  For some constant $c_0>0$,  $1\le b\le  c_0N^{2/3}$.   

\medskip

Begin with  
\be\begin{aligned}
  &Z_N^{\text{\rm p2l}}  = \sum_{\ell=1}^{N-1} Z_{(1,1),(\ell, N-\ell)}\\
  &\le N \biggl( {Z_{(1,1),(m, n)}} \cdot \max_{0\le k< n} \frac{Z_{(1,1),(m+k, n-k)}}{Z_{(1,1),(m, n)}}      \biggr) \bigvee  \biggl( {Z_{(1,1),(n, m)}} \cdot  \max_{0\le \ell< m} 
  \frac{Z_{(1,1),(n-\ell, m+\ell)}}{Z_{(1,1),( n,  m)}}  \biggr) 
    \end{aligned}\label{fa0}\ee
 The terms  in the large parentheses   are transposes of each other, so 
 we spell out the details only for the first case.   In one spot below it is convenient
 to have $m\ge n$, hence the choice in \eqref{fa-2}.  
 Thus, considering $b\ge 2$, and once
  $N$ is  large enough so that $\log N<N^{1/3}/3$,   bounding 
  \[  \bP\bigl\{  \log  Z_N^{\text{\rm p2l}}  \ge  N\freee_{1/2,1/2}(\mu) + bN^{1/3}  \bigr\}  \]
boils down to bounding the sum 
\begin{align}
&\bP\Bigl\{  \log  Z_{(1,1),(m, n)}   \ge  N\freee_{1/2,1/2}(\mu) + \tfrac13bN^{1/3}  \Bigr\}
\label{fa1}\\[4pt]
&\qquad \qquad  +  
\bP\Bigl\{  \log    \max_{0< k< n} \frac{Z_{(1,1),(m+k, n-k)}}{Z_{(1,1),(m, n)}}  
   \ge  \tfrac13bN^{1/3}  \Bigr\}.  
\label{fa2}\end{align}
The probability on line \eqref{fa1} is again taken care of with \eqref{Zmom2.9}. 
  Utilizing both inequalities in \eqref{Zcomp2}, the first one transposed, 
we deduce for $1\le k< n$, 
\be\begin{aligned}
&\frac{Z_{(1,1),(m+k, n-k)}}{Z_{(1,1),(m, n)}}
=\prod_{j=1}^k \frac{Z_{(1,1),(m+j, n-j)}}{Z_{(1,1),(m+j-1, n-j)}}
\cdot  \frac{Z_{(1,1),(m+j-1, n-j)}}{Z_{(1,1),(m+j-1, n-j+1)}}\\
&\le  \prod_{j=1}^k \frac{Z_{m+j,\, n-j}(\xexit>0)}{Z_{m+j-1,\, n-j}(\xexit>0)}
\cdot  \frac{Z_{m+j-1,\, n-j}(\xexit>0)}{Z_{m+j-1,\, n-j+1}(\xexit>0)}\\
&=  \frac{Z_{m+k,\, n-k}(\xexit>0)}{Z_{m, n}(\xexit>0)}
\le    \frac1{Q_{m, n}(\xexit>0)} \cdot \frac{Z_{m+k,\, n-k}}{Z_{m, n}}\\
&= \frac1{Q_{m, n}(\xexit>0)} \cdot \prod_{j=1}^k \frac{U_{m+j,n-j}}{V_{m+j-1,n-j+1}}. 
\end{aligned}\label{fa3}\ee   
The last equality used \eqref{Z2}.  In the  calculation above we  switched
from  partition functions $Z_{(1,1),(i,j)}$ that use only bulk weights to 
 partition functions $Z_{i,j}= Z_{(0,0),(i,j)}$  that use both bulk and 
  boundary weights, distributed as in assumption \eqref{distr4}.  The
parameter $\theta$ is at our disposal.  We take $\theta=\mu/2+rN^{-1/3}$ with $r>0$ 
and link $r$ to $b$ in the next lemma.  
The choice $\theta>\mu/2$ makes the $U/V$  ratios small which is good for 
bounding the last line of \eqref{fa3}.  However, this choice also makes 
$Q_{m, n}(\xexit>0)$ small which works against us.  To bound $Q_{m, n}(\xexit>0)$
from below we switch from $\theta=\mu/2+rN^{-1/3}$ to $\lambda=\mu/2-rN^{-1/3}$
and pay for this by bounding the Radon-Nikodym derivative.  Under parameter
$\lambda$ 
the event $\{\xexit>0\}$ is favored at the expense of $\{\yexit>0\}$, and we can get
a lower bound.  

Utilizing \eqref{fa3}, the probability in  \eqref{fa2}   is  
bounded as follows: 
\begin{align}
&\bP\Bigl\{  \log   \max_{1\le k\le n} \frac{Z_{(1,1),(m+k, n-k)}}{Z_{(1,1),(m, n)}}
   \ge  \tfrac13bN^{1/3}  \Bigr\}
  \; \le \; \bP\bigl\{  Q_{m, n}(\xexit>0)  \le  e^{-bN^{1/3}/6} \bigr\} \label{fa4}\\[5pt]
 &\qquad +  \; \bP\Bigl\{  \,  \max_{1\le k\le n}
 \sum_{j=1}^k \bigl( \log {U_{m+j,n-j}} -\log {V_{m+j-1,n-j+1}} \bigr) 
 \ge {bN^{1/3}/6}  \Bigr\} . \label{fa5} 
\end{align} 
We  treat first the right-hand side probability on line \eqref{fa4}.

\begin{lemma}  Let $(m,n)$ be as in \eqref{fa-2}.  Let $\theta=\mu/2+rN^{-1/3}$ be the parameter of boundary weights  as specified in  \eqref{distr4}. Given   $c_0\ge 1$,  we can choose positive constants $\e_0(\mu)<\e_1(\mu)$,  $N_0(\mu)$ and $C(\mu)$  such that,  under conditions 
\be\label{fa5.5} 
1\le b\le c_0N^{2/3}, \ \ \frac{\e_0(\mu)}{\sqrt{c_0}}\sqrt{b}\le r\le\frac{\e_1(\mu)}{\sqrt{c_0}}\sqrt{b}\; , 
\ \ \text{ and } \ N\ge c_0^{3}N_0(\mu), 
\ee
%Let  $0<\mu<\infty$ be fixed, $r>0$, $b\ge 1$,  $\theta=\mu/2+rN^{-1/3}$, weight distributionsas in  \eqref{distr4}   and  $(m, n)$   as in \eqref{fa-2}.  Then  there exist finite  constants $\kappa(\mu)$,   $C(\mu)$ and $N_0(\mu,b)$  such that the following holds:    if $r=\kappa(\mu)b^{1/2}$     and $N\ge N_0(\mu, b)$  then 
we have 
\be  \bP\bigl\{  Q_{m, n}(\xexit>0) \le   e^{-bN^{1/3}/6} \bigr\} \le  C(\mu)c_0^{3/2}b^{-3/2}.  
\label{fa6}\ee
\label{lmfa6} 
\end{lemma}
\begin{proof}    %Given $1\le b\le c_0N^{2/3}$ set $r=\e_1\sqrt b$ for a small enough $\e_1>0$ that will be determined in the course of the proof. 
Let $U_{i,0}, V_{0,j}$ be the boundary weights with parameter
$\theta=\mu/2+rN^{-1/3}$ as specified in  \eqref{distr4}. Let  $\Util_{i,0}, \Vtil_{0,j}$ denote
boundary weights  with 
parameter $\lambda=\mu/2-rN^{-1/3}$ in place of $\theta$. 
We ensure $\mu/4\le \lambda<\theta\le 3\mu/4$ by taking  $\e_1(\mu)\le\mu/4$.  

We also use 
a scaling parameter $M$   determined by $n=M\trigamf(\lambda)$.
Set $\bar m=\fl{M\trigamf(\mu-\lambda)}$ which satisfies $m-C_1(\mu)rN^{2/3}\le\bar m\le m$
for a constant $C_1(\mu)$, as long as $N\ge c_0^{3/4}N_0(\mu)$ to take care of the constant error from integer parts.    Set $t=2C_1(\mu)r$ and   $u=\fl{tN^{2/3}}$.

All along   bulk weights have distribution $Y_{i,j}^{-1}\sim$ Gamma($\mu,1$).
  The coupling of the boundary weights $\{U_{i,0}, V_{0,j}\}$  with $\{\Util_{i,0}, \Vtil_{0,j}\}$
   is such that $U_{i,0}\le \Util_{i,0}$. 
    Tildes mark quantities that use $\Util_{i,0}, \Vtil_{0,j}$.
  Recall that $\digamf$ is
   strictly increasing and $\trigamf$ strictly decreasing. 
\begin{align}
Q_{m, n}(\xexit>0)&\ge Q_{m, n}(0<\xexit\le u)
= \frac1{Z_{m,n}}\sum_{k=1}^u \biggl(\;\prod_{i=1}^k U_{i,0} \biggr) 
  \Zalt_{(k,1),(m,n)}\nn\\[4pt]
  &= \frac1{\wt Z_{m,n}}\sum_{k=1}^u 
  \biggl(\;\prod_{i=1}^k \Util_{i,0} \cdot \prod_{i=1}^k \frac{U_{i,0}}{\Util_{i,0}} \biggr) 
  \Zalt_{(k,1),(m,n)}\cdot  \frac{\wt Z_{m,n}}{Z_{m,n}}\nn\\[5pt]
  &\ge \wt Q_{m, n}(0<\xexit\le u)  \biggl(\;  \prod_{i=1}^u \frac{U_{i,0}}{\Util_{i,0}} \biggr) 
 \frac{\wt Z_{m,n}}{Z_{m,n}}.  \label{fa8} \end{align}  
We derive tail bounds for each of the three factors on line  \eqref{fa8}, working our
way from right to left. $C(\mu)$   denotes a constant that depends on $\mu$ and 
can change from one line to the next, while $C_i(\mu)$ denote constants specific
to the cases.  

Since $\theta>\lambda$ sit symmetrically around $\mu/2$ and $m\ge n$, 
\begin{align*}  \E(\log \wt Z_{m,n})-\E(\log  Z_{m,n})=m\bigl(-\digamf(\lambda)+\digamf(\theta)\bigr) 
+n\bigl(-\digamf(\mu-\lambda)+\digamf(\mu-\theta)\bigr)\ge 0 
\end{align*}
 and in fact vanishes for even $N$.  By Chebyshev and   the variance bound of 
 Theorem \ref{var-bd-thm}, 
\be\begin{aligned}
&\P\Bigl[ \; \frac{\wt Z_{m,n}}{Z_{m,n}} \le e^{-bN^{1/3}/18} \Bigr] 
\le\P\Bigl[ \;  \overline{ \log \wt Z_{m,n}}-\overline{\log Z_{m,n}} \,\le -bN^{1/3}/18  \Bigr]\\
&\le   \frac{2\cdot18^2}{N^{2/3}b^2} \bigl( \Vvv(\log \wt Z_{m,n}) + \Vvv(\log Z_{m,n}) \bigr) 
\le C(\mu)(1+r) b^{-2}\le C(\mu)b^{-3/2}. 
\end{aligned}\label{fa10}\ee
To understand the last inequality above for the first variance,  use the scaling parameter $M$  from above.  
Since $(\bar m, n)$ is the characteristic direction for $\lambda$ and $M$, 
\begin{align*}  \Vvv(\log \wt Z_{m,n}) &= \Vvv\Bigl(\log \wt Z_{\bar m,n} 
+ \sum_{i=\bar m+1}^{m}\log \wt U_{i,n}\Bigr)\\
&\le 2 \Vvv\bigl(\log \wt Z_{\bar m,n} \bigr) 
+ 2 \Vvv\Bigl(\; \sum_{i=\bar m+1}^{m}\log \wt U_{i,n}\Bigr)\\
&\le C(\mu)(M^{2/3} +  m-\bar m)  \le C(\mu)(1+r)N^{2/3}.  \end{align*}
We used above the variance bound of Theorem \ref{var-bd-thm} together with
the feature that  fixed constants work for parameters varying in a compact set.
This is now valid because we have constrained $\lambda$ and $\theta$ to lie
in $[\mu/4,3\mu/4]$. 
Similar argument works for the second variance in \eqref{fa10}.

Next, 
\[  \E(\log U_{1,0}-\log\Util_{1,0})=-\digamf(\theta)+\digamf(\lambda) 
\ge -C_2(\mu)rN^{-1/3}. \] 
Since $c_0\ge 1$, we can ensure $b>36C_2(\mu)rt$ by choosing $\e_1(\mu)$ small enough.  
Then by Chebyshev,   
\be\begin{aligned}
\P\Bigl[ \; \prod_{i=1}^u \frac{U_{i,0}}{\Util_{i,0}} \le e^{-bN^{1/3}/18} \Bigr] 
&= \P\Bigl[ \;\sum_{i=1}^u (\overline{\log U_{i,0}}-\overline{\log\Util_{i,0}})  
\le  -\bigl(\tfrac1{18}b-C_2(\mu)rt\bigr)N^{1/3} \Bigr]\\
&\le C(\mu)tb^{-2}\le C(\mu)b^{-3/2}. 
\end{aligned}\label{fa12}\ee

Now choose $\e_0(\mu) \in (0, \e_1(\mu))$.  
For the probability on  line  \eqref{fa8} write 
\be  \wt Q_{m, n}\{0<\xexit\le tN^{2/3}\} = 1- \wt Q_{m, n}\{\xexit> tN^{2/3}\} 
- \wt Q_{m, n}\{\yexit> 0\}.  \label{fa14}\ee
To both probabilities on the right we apply Lemma \ref{auxlm7} after adjusting the
parameters. Let $M$ and $\bar m$ be as above so that $(\bar m,n)$ is the 
characteristic direction for $\lambda$.  
   Reasoning as for the distributional equality in
\eqref{xidist2} and with $t= 2 C_1(\mu)r$, 
\[   \wt Q_{m, n}\{\xexit> tN^{2/3}\} \overset{d}=  
\wt Q_{\bar m, n}\{\xexit> tN^{2/3}-(m-\bar m)\}  \le  \wt Q_{\bar m, n}\{\xexit> tN^{2/3}/2\}. \] 
Consequently  by \eqref{aux8.6} 
\begin{align*}
\P\bigl[ \, \wt Q_{\bar m, n}\{\xexit> tN^{2/3}/2\} \ge e^{-\delta t^2 N^{4/3}/(4M) } \,\bigr] 
\le  C(\mu)t^{-3}\le C(\mu)c_0^{3/2}b^{-3/2}.  
\end{align*}
For the last probability on line \eqref{fa14} we get the same kind of bound by
defining $K$ through $m=K\trigamf(\mu-\lambda)$, and  
$\bar n=\fl{K\trigamf(\lambda)}\ge n+C_4(\mu)rN^{2/3}$.  Then
\[   \wt Q_{m, n}\{\yexit> 0\}  \overset{d}= \wt Q_{m, \bar n}\{\yexit> \bar n-n\} 
\le \wt Q_{m, \bar n}\{\yexit>C_4(\mu)rN^{2/3}\},  \]
and again by \eqref{aux8.6} 
\begin{align*}
\P\bigl[ \,  \wt Q_{m, \bar n}\{\yexit>C_4(\mu)rN^{2/3}\} \ge e^{-\delta C_4(\mu)^2 r^2 N^{4/3}/K } \, \bigr] 
\le  C(\mu)r^{-3}\le C(\mu)c_0^{3/2}b^{-3/2}.  
\end{align*}
The lower bound $b\ge 1$ implies lower  bounds $r\wedge t\ge \e_2(\mu)c_0^{-1/2}$ with $\e_2(\mu)>0$.    Hence we can ensure that 
\[   e^{-\delta C_4(\mu)^2 r^2 N^{4/3}/K } \le 1/4 \quad\text{and}\quad 
e^{-\delta t^2 N^{4/3}/(4M) } \le 1/4 \]  
by enforcing  $N\ge c_0^3N_0(\mu)$ for a large enough constant $N_0(\mu)$.  
The upshot of this  paragraph is that if $N\ge c_0^3N_0(\mu)$  then 
\be 
\P\bigl[ \,  \wt Q_{m, n}\{0<\xexit\le u\}  \le  \tfrac12 \, \bigr] 
 \le C(\mu)c_0^{3/2}b^{-3/2}.  \label{fa16}\ee

Put bounds \eqref{fa10}, \eqref{fa12}  and \eqref{fa16}  back into \eqref{fa8}.  
%Choose $t=2 C_3(\mu)r$ as in the last paragraph.  We can ensure that $b\ge 36C_2(\mu)rt$ needed for \eqref{fa12}  by choosing $b =\kappa(\mu)^{-2} r^2$ for a small enough $\kappa(\mu)$.   The constraint $N\ge N_1(\mu,r)$ can then be written in the form  $N\ge N_0(\mu,b)$.
 Adding up the bounds gives 
\[  \P\bigl[ \,  Q_{m, n}\{\xexit>0\}  \le  e^{-bN^{1/3}/6} \, \bigr] 
\le   C(\mu)c_0^{3/2}b^{-3/2}.  \qedhere  \]
 \end{proof}

We turn to probability \eqref{fa5}. By the Burke property Theorem \ref{burkethm} 
inside the probability we have a sum of i.i.d.\ terms with mean 
\be   \E( \log {U_{m+1,n-1}} -\log {V_{m,n}})=-\digamf(\theta)+\digamf(\mu-\theta) \le 
-C_5(\mu)rN^{-1/3}. \label{fa17}\ee
Consequently, if we let 
\be 
\eta_j= \log {U_{m+j,n-j}} -\log {V_{m+j-1,n-j+1}}+\digamf(\theta)-\digamf(\mu-\theta),
\label{faeta}\ee
 then
\be
\text{\eqref{fa5}} \le 
\bP\Bigl\{    \max_{1\le k\le n}
 \sum_{j=1}^k \bigl(\eta_j -C_5(\mu)rN^{-1/3} \bigr) 
 \ge bN^{1/3}/6 \Bigr\}. 
 \label{fa18}\ee
 The variables $\eta_j$ have all moments.  Apply 
 part (a) of Lemma \ref{rwlm}  below to the probability
above with  $t=N^{1/3}$,  $\alpha =C_5(\mu)r$ and  $\beta=b/6$.  With 
$r=\kappa(\mu)b^{1/2}$, $b^{1/2}\ge 6C_5(\mu)\kappa(\mu)$, and $p$
large enough,
this gives 
\be  
\text{\eqref{fa5}} \le  C(\mu) b^{-3/2}.  \label{fa20}\ee 

Insert bounds \eqref{fa6} and  \eqref{fa20} into   \eqref{fa4}--\eqref{fa5}, and this
in turn back into  \eqref{fa2}.  This completes the proof of \eqref{fa-4}. 
\end{proof}

%%%%%%end of section in suttu 

Before the third and last part of the proof of Theorem \ref{tot-f-thm}    
we state and prove the random walk lemma used to derive \eqref{fa20} above. 
It includes a part (b)   for subsequent use. 
 
\begin{lemma}  Let  $Z, Z_1, Z_2, \dotsc$ be i.i.d.\ random
variables that satisfy  $\mE(Z)=0$  and  $\mE(\abs{Z}^p)<\infty$ for
some $p>2$.  Set 
$S_k=Z_1+\dotsm+Z_k$.  Below $C=C(p)$     is a constant that depends only on $p$.

{\rm (a)} For all  $\beta\ge\alpha>0$ and  $ t>0$, 
\[  \mP\Bigl\{  \, \sup_{k\ge 0} \bigl( S_k- k\alpha t^{-1}\bigr) \ge \beta t\Bigr\} \le 
C\mE(\abs{Z}^p) \alpha^{-\,\frac{p^2}{2(p-1)}} \beta^{-\,\frac{p(p-2)}{2(p-1)}}. 
  \]
  
  {\rm (b)} For all  $\alpha,\beta, t>0$
  and $M\in\bN$ such that $2\beta\le M\alpha$, 
\[  \mP\Bigl\{ \, \sup_{k>Mt^2} \bigl( S_k- k\alpha t^{-1}\bigr) \ge -\beta t\Bigr\} \le 
C\mE(\abs{Z}^p) \alpha^{-p}M^{-(p/2)+1}. \]  
  \label{rwlm}\end{lemma}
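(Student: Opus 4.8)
The plan is to obtain both bounds from Doob's maximal inequality combined with a $p$-th moment estimate for $S_k$, applied along a dyadic decomposition of the time axis; part (a) then needs one optimization of the block scale, while part (b) reduces, thanks to the hypothesis $2\beta\le M\alpha$, to a driftless threshold estimate. First I would record the two standard ingredients. By Rosenthal's inequality, absorbing the second-moment term into the $p$-th moment term via $\mE(Z^2)\le(\mE\abs{Z}^p)^{2/p}$ and $k\le k^{p/2}$, one has $\mE\abs{S_k}^p\le C(p)k^{p/2}\mE\abs{Z}^p$ for all integers $k\ge1$; and since $\abs{S_k}^p$ is a nonnegative submartingale, Doob's inequality gives $\mP\{\max_{0\le k\le n}S_k\ge\lambda\}\le\lambda^{-p}\mE\abs{S_n}^p$. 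Combining, for every integer $n\ge1$ and every $\lambda>0$,
\[
\mP\Bigl\{\max_{0\le k\le n}S_k\ge\lambda\Bigr\}\le C(p)\,\mE\abs{Z}^p\,\lambda^{-p}n^{p/2},
\]
which I will call the \emph{basic estimate}. Write $\gamma=\alpha/t$ for the per-step drift.

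For part (a), fix an integer $K\ge1$ to be chosen at the end and split $\{k\ge0\}$ into $\{0,\dots,K\}$ and the dyadic blocks $(2^jK,2^{j+1}K]$, $j\ge0$. On the event in the statement either some $k\le K$ satisfies $S_k\ge\beta t$, or for some $j$ some $k\in(2^jK,2^{j+1}K]$ satisfies $S_k\ge\beta t+k\gamma\ge 2^jK\gamma$. The first possibility has probability $\le C(p)\mE\abs{Z}^p(\beta t)^{-p}K^{p/2}$ by the basic estimate; the $j$-th term of the second is bounded, again by the basic estimate (on $[0,2^{j+1}K]$ with threshold $2^jK\gamma$), by $C(p)\mE\abs{Z}^p\,2^{p/2}2^{-jp/2}K^{-p/2}\gamma^{-p}$, and since $p>2$ these sum to $\le C(p)\mE\abs{Z}^pK^{-p/2}\gamma^{-p}$. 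Hence
\[
\mP\{\cdots\}\le C(p)\,\mE\abs{Z}^p\Bigl(\frac{K^{p/2}}{(\beta t)^p}+\frac{t^p}{K^{p/2}\alpha^p}\Bigr).
\]
Taking $K=\ce{(\beta/\alpha)^{p/(p-1)}\,t^2}$ makes the powers of $t$ cancel in both terms and renders each of size $\alpha^{-p^2/(2(p-1))}\beta^{-p(p-2)/(2(p-1))}$, which is the claim. (The ``balanced'' choice $K\asymp(\beta/\alpha)t^2$ would instead give the slightly sharper $(\alpha\beta)^{-p/2}$, which is stronger than what is stated in the relevant regime but not needed.)

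For part (b), the key structural observation is that for $k>Mt^2$ the drift already swallows $\beta t$: then $k\gamma=k\alpha/t>Mt\alpha\ge2\beta t$, so $k\gamma-\beta t\ge\tfrac12 k\gamma$ and therefore $\{S_k-k\gamma\ge-\beta t\}\subseteq\{S_k\ge\tfrac12 k\gamma\}$ on this range of $k$. Peeling over the blocks $(2^jMt^2,2^{j+1}Mt^2]$, $j\ge0$, and using that on the $j$-th block $S_k\ge\tfrac12 k\gamma\ge 2^{j-1}Mt^2\gamma=2^{j-1}M\alpha t$, the basic estimate bounds the $j$-th term by $C(p)\mE\abs{Z}^p(2^{j-1}M\alpha t)^{-p}(2^{j+1}Mt^2)^{p/2}\le C(p)\mE\abs{Z}^p\,2^{-jp/2}M^{-p/2}\alpha^{-p}$ (the $t$'s cancel); summing over $j$ yields $\le C(p)\mE\abs{Z}^pM^{-p/2}\alpha^{-p}$, hence a fortiori the asserted $\le C(p)\mE\abs{Z}^pM^{-(p/2)+1}\alpha^{-p}$ since $M\ge1$. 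Equivalently one may simply union-bound over $k>Mt^2$ using $\mP\{S_k\ge\tfrac12 k\gamma\}\le C(p)\mE\abs{Z}^p(k\gamma)^{-p}k^{p/2}$ together with $\sum_{k>Mt^2}k^{-p/2}\le C(p)(Mt^2)^{1-p/2}$, which produces the exponent $M^{-(p/2)+1}$ directly; the degenerate case $Mt^2<1$ is handled identically with the index set $\{k\ge1\}$. The routine part of the whole argument is the bookkeeping of the geometric sums and, in (a), choosing $K$ so that the $t$-dependence disappears and the two error terms balance into the stated exponents; the only genuine idea is the drift-absorption step in (b).
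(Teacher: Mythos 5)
Your overall strategy coincides with the paper's: a $p$-th moment bound $\mE\abs{S_k}^p\le C(p)\mE\abs{Z}^p k^{p/2}$ (you via Rosenthal, the paper via Burkholder--Davis--Gundy), Doob's inequality over blocks of time, drift absorption on the far blocks, and a final balancing of the cutoff. Your part (b) is correct and is essentially the paper's argument (your geometric blocks even give $M^{-p/2}$, stronger than the stated $M^{1-p/2}$); only the closing aside ``equivalently one may simply union-bound over $k$'' is not equivalent: $\sum_{k>Mt^2}k^{-p/2}\gamma^{-p}\asymp (Mt^2)^{1-p/2}t^p\alpha^{-p}=M^{1-p/2}t^2\alpha^{-p}$ carries a stray factor $t^2$.

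In part (a), however, the final bookkeeping contains a concrete error. Because you use dyadic blocks, your tail contribution scales as $K^{-p/2}t^p\alpha^{-p}$, whereas the paper's blocks of width $t^2$ give $\sum_{j\ge m}j^{-p/2}\alpha^{-p}\asymp m^{1-p/2}\alpha^{-p}$ with $K=mt^2$; these scale differently in the cutoff, so the balance point is not the same. With your choice $K=\ce{(\beta/\alpha)^{p/(p-1)}t^2}$ the first term is indeed $\asymp\alpha^{-p^2/(2(p-1))}\beta^{-p(p-2)/(2(p-1))}$, but the second term is $\asymp\alpha^{-p(p-2)/(2(p-1))}\beta^{-p^2/(2(p-1))}$: the exponents of $\alpha$ and $\beta$ are interchanged, so the claim that ``each'' term has the stated size is false. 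The ratio of your second term to the stated bound is $(\alpha/\beta)^{p/(p-1)}$, so your argument yields the lemma only when $\beta\ge\alpha$ (up to constants); the same holds for your ``balanced'' alternative, since $(\alpha\beta)^{-p/2}$ exceeds the stated bound by $(\alpha/\beta)^{p/(2(p-1))}$ when $\alpha>\beta$, and in fact no choice of $K$ in your two-term bound can do better than $(\alpha\beta)^{-p/2}$. The paper's blocking, $\mP\{\max_{k\le mt^2}S_k\ge\beta t\}+\sum_{j\ge m}\mP\{\max_{k\le (j+1)t^2}S_k\ge j\alpha t\}\le C\mE\abs{Z}^p\bigl(m^{p/2}\beta^{-p}+m^{1-p/2}\alpha^{-p}\bigr)$ balanced at $m\asymp(\beta/\alpha)^{p/(p-1)}$, reproduces the stated exponents exactly. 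To be fair, that choice presupposes $m\ge1$, i.e.\ $\beta\gtrsim\alpha$, and the inequality as literally stated cannot hold for all $\alpha,\beta,t$ (take bounded $Z$, $\alpha=t$, $\beta=1/t$, $t\to\infty$: the left side stays bounded below while the right side tends to $0$); the regime actually used is $\alpha\asymp\sqrt b$, $\beta\asymp b$, where your displayed bound does imply the claim. So the defect is not the strategy but the asserted evaluation of the second term, which as written is wrong and, taken at face value, would be needed to cover all $\alpha,\beta,t>0$.
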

\begin{proof}  Part (a). 
Pick an integer $m>0$ and split the probability:
\be\begin{aligned}
&\mP\Bigl\{ \, \sup_{k\ge 0} \bigl( S_k- k\alpha t^{-1}\bigr) \ge \beta t\Bigr\} \le 
\mP\Bigl\{ \, \max_{0<k\le mt^2}  S_k  \ge \beta t\Bigr\} \\
&\qquad + \sum_{j\ge m} 
\mP\Bigl\{ \, \max_{jt^2<  k \le  (j+1)t^2} \bigl( S_k- k\alpha t^{-1}\bigr) \ge \beta t\Bigr\}. 
 \end{aligned}\label{fe1}\ee
 Recall that the Burkholder-Davis-Gundy inequality \cite[Thm 3.2]{burk-73} gives 
$\mE \abs{S_k}^p \le C_p \mE\abs{Z}^p k^{p/2}$. 
  Doob's inequality together with BDG gives
\[   \mP\Bigl\{ \, \max_{0<k\le mt^2}   S_k  \ge \beta t\Bigr\}   \le  
C \mE\abs{Z}^p m^{p/2}\beta^{-p}  
  \]
  where we now write $C$ for a constant that depends only on $p$. 
For the last  probability in \eqref{fe1},  
\begin{align*}
&\mP\Bigl\{ \, \max_{jt^2<  k \le  (j+1)t^2} \bigl( S_k- k\alpha t^{-1}\bigr) \ge \beta t\Bigr\}  \le 
\mP\Bigl\{ \, \max_{0<  k \le  (j+1)t^2}  S_k \ge  j\alpha t  \Bigr\} \\[4pt]
 &\qquad\qquad 
\le C \mE\abs{Z}^p j^{-p/2}\alpha^{-p}. 
\end{align*}
Putting the bounds back into \eqref{fe1} gives
\begin{align*}
\mP\Bigl\{  \sup_{k\ge 0} \bigl( S_k- k\alpha t^{-1}\bigr) \ge \beta t\Bigr\} &\le 
C \mE\abs{Z}^p\Bigl(  \frac{m^{p/2}}{\beta^{p}}  + \alpha^{-p} \sum_{j\ge m} j^{-p/2}  \Bigr) \\
&\le C \mE\abs{Z}^p \bigl(   {m^{p/2}}{\beta^{-p}}  + \alpha^{-p}{m^{-(p/2)+1}}\bigr). 
\end{align*}
Choosing $m$ a constant multiple of $(\beta/\alpha)^{p/(p-1)}$ gives the conclusion
for part (a). 

\medskip

Part (b).  Proceeding as above:
\begin{align*}
& \mP\Bigl\{  \sup_{k>Mt^2} \bigl( S_k- k\alpha t^{-1}\bigr) \ge -\beta t\Bigr\} \le
 \sum_{j\ge M} 
\mP\Bigl\{ \, \max_{jt^2<  k \le  (j+1)t^2} \bigl( S_k- k\alpha t^{-1}\bigr) \ge -\beta t\Bigr\}\\
 &\qquad \le \sum_{j\ge M} \mP\Bigl\{ \, \max_{0<  k \le  (j+1)t^2}  S_k \ge \tfrac12 j\alpha t  \Bigr\}
\le  C \mE\abs{Z}^p \alpha^{-p} \sum_{j\ge M} j^{-p/2}\\
&\qquad \le C\mE(\abs{Z}^p) \alpha^{-p}M^{-(p/2)+1}.  \qedhere
\end{align*}
\end{proof}

Next  the   last part of the proof of Theorem \ref{tot-f-thm}.   

\begin{proof}[Proof of bound   \eqref{fa-5}]
  We shall show the existence of 
 constants $c_1=c_1(\mu)>0$ and  $C(\mu), N_0(\mu)<\infty$ such that, 
for $s\ge (6c_1)^{-1/2}$ and $N\ge N_0(\mu)$, 
\be \P\Bigl[\;  Q_N^{\text{\rm p2l}}\bigl\{ \,\bigl\lvert 
{x_{N-2}-(\tfrac{N}2, \tfrac{N}2)} \bigr\rvert \ge 2sN^{2/3}\,\bigr\}
\ge e^{-c_1s^2N^{1/3}} \,\Bigr]  \le \; C(\mu)s^{-3}.  
\label{fa-3}\ee
Abbreviating $A_N=  \{ \, \lvert 
{x_{N-2}-(\tfrac{N}2, \tfrac{N}2)}  \rvert \ge 2sN^{2/3}\}$,
we have   
 \begin{align*}
P_N^{\text{\rm p2l}}(A_N) &= \E Q_N^{\text{\rm p2l}}(A_N) \le e^{-c_1s^2N^{1/3}} + 
\P[\;  Q_N^{\text{\rm p2l}}(A_N)   \ge e^{-c_1s^2N^{1/3}} \,]
\le C(\mu)s^{-3}.  
\end{align*}
To cover all $s\ge 1/2$   increase the constant $C(\mu)$,  and then 
  \eqref{fa-5} follows for $b=2s$.   

To show \eqref{fa-3} we control sums of ratios of partition functions: 
\begin{align*}
&Q_N^{\text{\rm p2l}}\{ \,\abs{x_{N-2}-(\tfrac{N}2, \tfrac{N}2)}\ge 2sN^{2/3}\,\} \\[4pt]
&\quad \le    \sum_{0<\ell<N/2-sN^{2/3}}\frac{ Z_{(1,1),(\ell,N-\ell)}} {Z_N^{\text{\rm p2l}}}
+ \sum_{N/2+sN^{2/3}<\ell<N} \frac{ Z_{(1,1),(\ell,N-\ell)} } {Z_N^{\text{\rm p2l}}} . 
\end{align*}
We treat the second sum from above. The first one develops the same way.
  With $(m,n)$ as in \eqref{fa-2} and utilizing
\eqref{fa3} write 
\begin{align*}
 &\sum_{N/2+sN^{2/3}<\ell<N} \frac{ Z_{(1,1),(\ell,N-\ell)} } {Z_N^{\text{\rm p2l}}}  \  \ 
 \le \sum_{sN^{2/3}\le k <N/2} \frac{ Z_{(1,1),(m+k,n-k)} } { Z_{(1,1),(m,n)} }   \\
&\qquad\qquad \le \; \frac1{Q_{m, n}(\xexit>0)} \sum_{sN^{2/3}\le k <N/2} \; 
 \prod_{j=1}^k \frac{U_{m+j,n-j}}{V_{m+j-1,n-j+1}}\\
 &\qquad\qquad \le \; \frac{N}{Q_{m, n}(\xexit>0)} \cdot \max_{sN^{2/3}\le k <N/2} \;
 \prod_{j=1}^k \frac{U_{m+j,n-j}}{V_{m+j-1,n-j+1}} .
 \end{align*} 
As in   \eqref{fa-2} we introduced again boundary weights with parameter 
$\theta=\mu/2+rN^{-1/3}$.     
The value of $c_1=c_1(\mu)$  will be determined below. 
Consider $N$ large enough so that $N\le e^{c_1N^{1/3}}$ and take  $s\ge 1$.
Define $\eta_j$ as in \eqref{faeta} and let $C_5(\mu)$ be as in \eqref{fa17}.  Then 
\begin{align}
&\P\biggl[\; \sum_{N/2+sN^{2/3}<\ell<N} \frac{ Z_{(1,1),(\ell,N-\ell)} } {Z_N^{\text{\rm p2l}}}
\ge e^{-c_1s^2N^{1/3}} \,\biggr] \nn \\
&\quad  \le \; \P\bigl[ \, Q_{m, n}(\xexit>0) \le e^{-c_1s^2N^{1/3}} \, \bigr] \label{fa30} \\
&\qquad\qquad 
+\;  \bP\Bigl[  \;  \max_{sN^{2/3}\le k\le N/2}
 \sum_{j=1}^k \bigl(\eta_j -C_5(\mu)rN^{-1/3} \bigr) 
 \ge -3c_1s^2N^{1/3} \,\Bigr]   \label{fa32}\\
 &\quad  \le \; C(\mu)s^{-3}. \nn
\end{align} 

The justification for the last inequality is in the previous lemmas.   
We apply Lemma \ref{lmfa6} with $b=6c_1s^2$ to
the probability on line \eqref{fa30}.    Since $s\le \tfrac12N^{1/3}$ (otherwise the probability in \eqref{fa-3} vanishes and there is nothing to prove),  in Lemma \ref{lmfa6} we take $c_0=1$ and to satisfy  $b\le c_0N^{2/3}$  we consider only $c_1\le \tfrac23$.   Pick $\kappa(\mu)\in (\e_0(\mu), \e_1(\mu))$ and set   $r=\kappa(\mu)b^{1/2}=\kappa(\mu)s\sqrt{6c_1}$.   Now Lemma \ref{lmfa6}  can be applied to bound 
  probability   \eqref{fa30}   by $C(\mu)s^{-3}$. 
 
Apply Lemma \ref{rwlm}(b) with $M=s$, 
$t=N^{1/3}$, $\alpha=C_5(\mu)r$ and  $\beta=3c_1s^2$,   to  bound the probability on line \eqref{fa32}   
also by $C(\mu)s^{-3}$.  The condition $2\beta\le M\alpha$ of that lemma is 
equivalent to $\sqrt{6c_1}\le C_5(\mu)\kappa(\mu)$, and we can fix $c_1$
small enough to satisfy
this.   This completes the proof of \eqref{fa-3} and thereby the proof of Theorem
\ref{tot-f-thm}.  \end{proof}

\begin{proof}[Proof of Theorem \ref{thm-Ztotbd}]
{\bf Case 1:}  $\theta\ne\mu/2$.  We do the subcase $0<\theta<\mu/2$.
By \eqref{Z2}, 
\be\begin{aligned}
\log Z_N^{\text{\rm p2l}}(\theta,\mu)  &= \log Z_{N,0} + 
 \log \biggl(1+  \sum_{k=1}^N  \prod_{i=1}^{k}\frac{V_{N-i+1,i}}{U_{N-i+1,i}}\biggr). 
\end{aligned}\label{fo3.9}\ee
Since 
\[  \E(\log {V_{N-i+1,i}} -\log {U_{N-i+1,i}}) = -\digamf(\mu-\theta)+\digamf(\theta)<0\]
the random variable 
\[   \log \biggl(1+  \sum_{k=1}^\infty  \prod_{i=1}^{k}\frac{V_{N-i+1,i}}{U_{N-i+1,i}}\biggr)  \]
is positive and  finite.  Since $\log Z_{N,0}$ is a sum of i.i.d.\  variables
$\log U_{i,0}$ with $U_{i,0}^{-1}\sim$ Gamma($\theta, 1$), the conclusions follow for the case $0<\theta<\mu/2$.

\bigskip

{\bf Case 2:}  $\theta=\mu/2$. 
 Let 
$ (m, n)=( N-\fl{N/2}, \fl{N/2} )$. Separate the partition function in the
characteristic direction and use \eqref{Z2}: 
\be\begin{aligned}
\log Z_N^{\text{\rm p2l}}(\mu/2,\mu)  %&= \log Z_{m,n} + 
%\log \biggl( \, \sum_{k=0}^m\frac{Z_{m-k,n+k}}{Z_{m,n}}\;+\; 
%\sum_{k=1}^n\frac{Z_{m+k,n-k}}{Z_{m,n}}\biggr) \\
&= \log Z_{m,n} + 
\log \biggl( \, \sum_{k=0}^m \prod_{i=1}^{k}\frac{V_{m-i+1,n+i}}{U_{m-i+1,n+i}}\;+\; 
\sum_{k=1}^n  \prod_{i=1}^{k}\frac{U_{m+i,n-i+1}}{V_{m+i,n-i+1}}\biggr). 
\end{aligned}\label{fo4}\ee
 By the Burke property the mean zero random variables 
 $\eta_i=\log {U_{m+i,n-i+1}} -\log {V_{m+i,n-i+1}}$ for $i\in\bZ$ 
  are  i.i.d.  
 For $k\ge 1$ define  sums 
\[  S_k=\sum_{i=1}^k \eta_i\,, \ \  S_0=0    \quad\text{and}\quad  
S_{-k}=-\sum_{i=1}^{k} \eta_{-i+1}.   \]
At $\theta=\mu/2$,  $\E(\log Z_{m,n})=Ng(\mu/2,\mu)$.  Consequently \eqref{fo4} gives 
\be\begin{aligned}   \log Z_N^{\text{\rm p2l}}(\mu/2,\mu) - Ng(\mu/2,\mu) 
\; = \; \overline{ \log Z_{m,n}}  +  O({\log N}) +  \max_{-m\le k\le n} S_k. 
 \end{aligned} \label{fo6}\ee
 By the usual strong law of large numbers $N^{-1} \max_{-m\le k\le n} S_k\to 0 $ a.s.\ and 
 so together with 
  \eqref{Zllnbd},  \eqref{fo6}  gives the law of large numbers \eqref{llnZtotbd}
   in the case $\theta=\mu/2$. 
Second,  since $\overline{ \log Z_{m,n}}$ is 
stochastically of order $O(N^{1/3})$  by Theorem \ref{var-bd-thm}  and since 
$N^{-1/2}  \max_{-m\le k\le n} S_k$ converges weakly to $\zeta(\mu/2,\mu)$ defined in \eqref{defzeta2},
\eqref{fo6} implies also the weak limit \eqref{Ztotbdclt}.  
\end{proof}

\bibliography{growthrefs}
\bibliographystyle{plain}
\end{document}